\newcommand{\salg}{\ensuremath{\upsigma\text{-algebra}}\xspace}
\newcommand{\deriv}[1]{\frac{\mathrm{d}}{\mathrm{d}{#1}}}
\newcommand{\pderiv}[1]{\frac{\partial}{\partial{#1}}}
\newcommand{\sX}{\ensuremath{\mathcal{X}}\xspace}
\newcommand{\sY}{\ensuremath{\mathcal{Y}}\xspace}
\newcommand{\dd}{\mathrm{d}}
\newcommand{\NN}{\mathbb{N}}
\newcommand{\RR}{\ensuremath{\mathbb{R}}}
\newcommand{\RRP}{\ensuremath{\RR^+}}
\newcommand{\II}{\mathds{1}}
\providecommand{\abs}[1]{\left\lvert{#1}\right\rvert}
\providecommand{\norm}[1]{\left\lVert{#1}\right\rVert}
\DeclareMathOperator*{\esssup}{ess\,sup}
\newcommand{\Lpb}{\ensuremath{B}\xspace}
\newcommand{\normi}[1]{{\norm{#1}}_{\sY-\infty}}
\newcommand{\normoi}[1]{{\norm{#1}}_B}
\newcommand{\normoid}[1]{{\norm{#1}}_{B^d}}
\newcommand{\Wopb}{\ensuremath{D}\xspace}
\newcommand{\normooi}[1]{{\norm{#1}}_D}
\newcommand{\Mi}{\ensuremath{\mathcal{M}}\xspace}
\newcommand{\Moi}{\ensuremath{\mathcal{M}_{0,\infty}}\xspace}
\newcommand{\Mbv}{\ensuremath{\mathcal{M}_\mathrm{BV}}}
\newcommand{\normmoi}[1]{\ensuremath{\norm{#1}_{\mathcal{M}_{0,\infty}}}\xspace}
\newcommand{\Cho}{\ensuremath{C_1}}
\newcommand{\Chd}{\ensuremath{C_2}}
\newcommand{\CAW}{\ensuremath{C_3}}
\newcommand{\CGro}{\ensuremath{C_4}}
 \newtheorem{prop}{Proposition}
 \newtheorem{defn}[prop]{Definition}
 \newtheorem{thrm}[prop]{Theorem}
\newcommand{\opnorm}{\@ifstar\@opnorms\@opnorm}
\newcommand{\@opnorms}[1]{%
  \left|\mkern-1.5mu\left|\mkern-1.5mu\left|
   #1
  \right|\mkern-1.5mu\right|\mkern-1.5mu\right|
}
\newcommand{\@opnorm}[2][]{%
  \mathopen{#1|\mkern-1.5mu#1|\mkern-1.5mu#1|}
  #2
  \mathclose{#1|\mkern-1.5mu#1|\mkern-1.5mu#1|}
}
\begin{document}

\title{Properties of the solutions of delocalised coagulation and inception problems with outflow boundaries}

\author{Robert I. A. Patterson\thanks{Weierstrass Institute, Mohrenstr. 39, 10117 Berlin, Germany. Robert.Patterson@wias-berlin.de}}
\date{February 19, 2015}

\maketitle
\begin{abstract}
Well posedness is established for a family of equations modelling particle populations
undergoing delocalised coagulation, advection, inflow and outflow in a externally specified velocity
field.  Very general particle types are allowed while the spatial domain is a bounded region of
$d$-dimensional space for which every point lies on exactly one streamline associated
with the velocity field.  The problem is formulated as a semi-linear ODE in the Banach space of 
bounded measures on particle position and type space.  A local Lipschitz property is established
in total variation norm for the propagators (generalised semi-groups) associated with the problem
and used to construct a Picard iteration that establishes local existence and global
uniqueness for any initial condition.  The unique weak solution is shown further to be a differentiable
or at least bounded variation
strong solution under smoothness assumptions on the parameters of the coagulation interaction.
In the case of one spatial dimension strong differentiability is established even for coagulation parameters
with a particular bounded variation structure in space.  This one dimensional extension establishes the
convergence of the simulation processes studied in [Patterson, \textit{Stoch. Anal. Appl.} 31, 2013]
to a unique and differentiable limit.

Keywords: Coagulation, advection, existence, uniqueness, regularity, Banach ODE, propagator, boundary

AMS 2010: 34G20, 35A01, 35A02, 35F61, 82C22
\end{abstract}

\section{Introduction}
Smoluchowski \citep{Smol16b} introduced equations for the concentrations of
particles of different sizes undergoing coagulation in a spatially homogeneous population.
\begin{equation}\label{e:smolold}
\deriv{t}c(t,y) =
\frac12 \sum_{y^\prime < y} K(y,y^\prime) c(t,y^\prime)c(t,y-y^\prime)
-c(t,y)\sum_{y^\prime} K(y,y^\prime) c(t,y^\prime),
\end{equation}
where $c(t,y)$ is the concentration of particles of size $y$ at time $t$ and $K$ is a symmetric
function defining the `reaction' rates.
The Smoluchowski coagulation equations can be regarded as describing a system of binary
reactions involving an infinite number of species, but with a very structured, although
non-sparse set of rates and \eqref{e:smolold} abstractly written $\dot{c}=R(c)$.  The model
therefore extends naturally to a reaction--transport problem for spatially inhomogeneous
populations of coagulating particles of the general form $\dot{c} + \mathcal{A}c=R(c)$
for some transport operator $\mathcal{A}$.

Since coagulation is a binary reaction in which every possible pair of particles may coagulate,
the equations are, even in the spatially homogeneous case, non-linear and more
significantly non-local in particle size (size may here be generalised to `type').  The
first existence results for the Smoluchowski coagulation equation and its extensions
were based on convergent sub-sequences of approximating stochastic processes.
The first convergence result of this kind with simple
diffusive transport of particles is due to \citet{Lang80}, generalisations were achieved
by \citet{Nor04,Nor14}, \citet{Wells06b} and \citet{Yag09}.  This is quite a natural approach, because
the equations are based on a microscopic stochastic model and related stochastic processes
have also proved fruitful for numerical purposes going back to \citet{Marc68} and \citet{Gil72}.

The results just mentioned are essentially compactness results and say nothing about
uniqueness of the limiting trajectories, much less of uniqueness for the solutions to the
Smoluchowski equation and its extensions.  Convergence and uniqueness were proved
together by \citet{Gui01} who modelled diffusion as a random walk on a lattice and used a
more functional analytic approach.  Going further in this direction one is led to regard the
Smoluchowski equation and its extensions as an ODE on a Banach space and to proceed
via a locally Lipschitz source term and a Picard iteration method to show existence and
uniqueness in some functional setting.  The general strategy is presented in chapters 5\&6 of
\citep{Pazy83}.  Applications to Smoluchowski problems are given by \citep{Wro02,Ama05,Bai11}
and the works cited therein.

Especially when approaching the Smoluchowski equation from the point of view of stochastic
particle systems it is natural to think of measure valued solutions.  A particle system is identified
with its empirical measure and thus instead of functional solutions one is led to look at measure
valued solutions in a weak setting.  To give the concrete example that will be the focus of this work:
A solution (with a given initial condition) is a flow of measures
$\mu_t$ on positions in \sX and particle types (sizes and potentially additional details) in \sY
satisfying
\begin{multline}\label{eq:smol}
\deriv{t}\int_{\sX\times\sY}f(x,y)\mu_t(\dd x,\dd y) \\
= \int_{\sX\times\sY}u_t(x)\cdot\nabla f(x,y)\mu_t(\dd x,\dd y)+
 \int_{\sX\times\sY}f(x,y)I_t(\dd x,\dd y) \\
+ \frac12\int_{(\sX\times\sY)^2}\left[f(x_1,y_1+y_2)-f(x_1,y_1)-f(x_2,y_2)\right]\\
   K(y_1,y_2)h(x_1,x_2)\mu_t(\dd x_1,\dd y_1)\mu_t(\dd x_2,\dd y_2)
\end{multline}
for all $f$ in a class of functions $D$ to be specified below.  Here the problem has been moved
from the strong formulation of \eqref{e:smolold} to a weak setting; a transport operator $u_t \cdot \nabla$
(the dual of the $\mathcal{A}$ mentioned above) has been introduced and the delocalisation
of the coagulation specified via a function $h$, which may be regarded as a mollifier.
A particle source term $I_t$ has also been added, which is relevant for many real-world applications
as discussed later.

Signed measures can be regarded as Banach space under a wide range of norms and equation~\eqref{eq:smol}
interpreted as a Banach space valued ODE and Picard-like fixed point strategies introduced.
An important insight of the monograph \citep{Kolok10b} was to exploit duality of linear
operators and norms between measures and appropriate spaces of test functions in pursuit of 
this programme.  In this way
one performs most calculations for operators on test function spaces, which are a little easier to
work with than operators on spaces of measures.  Measure valued solutions are also the topic
of \citep{Nor14}, which also uses a linear operator approach, but uses approximation rather than
duality arguments and deals with unbounded coagulation kernels.

All the work discussed so far deals with diffusing particles (contrast \eqref{eq:smol})
and solutions either with a zero gradient
boundary conditions, which excludes outflow or defined on the whole of $\RR^d$ so that outflow
is thereby excluded.  For numerical reasons motivated by applications in engineering, the present
author has been interested in the Smoluchowski equation with advective transport and a delocalised
coagulation interaction \citep{Pat12,Menz12}.  In particular for engineering applications particle gain
and loss terms are important---industrial equipment is designed to take in material, alter it and then
send it on either as waste or product.  This gives the problem as formulated in \citep{Pat12,Menz12}
and other applied works a different structure to those studied in previous mathematical
works.  For example, individual particles experience irreversible processes, but nevertheless the
system is expected to reach a steady state in the large time limit under a wide range of conditions.
Measure valued processes (which can be interpreted as particle processes) with an inflow term although
no interaction were also studied in \citep{Eve12}.

For \eqref{eq:smol} specific problem an
initial existence result via the compactness of approximating stochastic processes was given in
\citep{Pat13}.  In that work however convergence of the approximating processes could not be proved,
only sequential compactness, because the number of distinct limit points was unknown.  This was
not only mathematically frustrating, but also a major obstacle hindering the numerical analysis of the
associated simulation methods.

The purpose of the present work is to establish uniqueness of measure valued solutions for \eqref{eq:smol}.
Additionally Lipschitz continuity in the initial conditions is shown and the same Picard iteration
method that proves uniqueness of solutions provides a purely analytic existence proof.  The result
can thus be characterised as one of ``well posedness''.  Formally there are some new existence results---the assumption of only one spatial
dimension in \citep{Pat13} is relaxed, but with the assumptions used in this work the proof in that paper
could easily be extended.  The existence of a differentiable strong solution is of interest,
because it opens the way to a study of the way in which the solution approaches a solution of the
corresponding equation with a local coagulation interaction, see for example \citep{Pat14}.

\section{Statement of Main Results}

In order to make a precise statement it is first necessary to go into details regarding the various
objects appearing in \eqref{eq:smol}.  The basic spaces are the particle position and type spaces
\sX and \sY respectively.  The type space, which carries information about the mass and any
other internal details of a particle is assumed to be a locally compact, second countable Hausdorff
space on which
coagulation is represented by a commutative +  operator.  The particle position space $\sX$ is
assumed to be a simply connected, relatively compact subset of $\RR^d$, which is equipped
with Lebesgue measure and a derivative $\nabla$.
Both \sX and \sY are given their respective Borel {\salg}s and $\sX \times \sY$
is given the product topology and \salg.

Throughout this work $\RR^d$ will be given the usual Euclidean norm, which will be written
$\abs{\cdot}$.  Linear operators $L$ between two normed spaces $\left(A,\norm{\cdot}_A\right)$
$\left(B,\norm{\cdot}_B\right)$ are given the operator norm
\begin{equation*}
\norm{L}_{A\rightarrow B} := \sup_{x\in A \colon \norm{x}_A=1}\norm{Lx}_B.
\end{equation*}

\subsection{Properties of the Flow and Spatial Domain}\label{s:flowass}
\subsubsection*{Velocity field}
Particles are assume to be transported in a time dependent velocity field $u_t$ defined on
$\overline{\sX}$ the closure
of \sX such that $u \in C\left(\RRP, C^2\left(\overline{\sX},\RR^d\right)\right)$, satisfying
\begin{itemize}
\item $\norm{u}_\infty :=\sup_{t\in\RRP,x\in\overline{\sX}}\abs{u_t(x)} < \infty$,
\item $\norm{\nabla \cdot u}_\infty :=\sup_{t\in\RRP,x\in\overline{\sX}}
            \abs{\sum_{k=1}^d\pderiv{x_k}u_{k,t}(x)} < \infty$,
\item $\opnorm{\nabla u} := \sup_t \norm{\nabla u_t(x)}_{\RR^d \rightarrow \RR^d}  < \infty$ viewing the
matrices $\nabla u_t$ as linear operators,
\item $\norm{\nabla \nabla \cdot u}_\infty :=\sup_{t\in\RRP,x\in\overline{\sX}}\abs{\nabla \left(\nabla\cdot u_{t}(x)\right)} < \infty$.
\end{itemize}

\subsubsection*{Boundaries}
It is assumed that the spatial domain \sX is simply connected and has a regular boundary $\partial \sX$
that can be decomposed into three parts, each with outward normal $n(x)$:
\begin{itemize}
\item $\Gamma_\mathrm{in}$ where $n(x)\cdot u_t (x)<0$ for all $t\in\RRP$,
\item $\Gamma_\mathrm{side}$ where $n(x)\cdot u_t(x) = 0$,
\item $\Gamma_\mathrm{out}$ where
$n(x)\cdot u_t (x)>0$.  
\end{itemize}
Further $\Gamma_\mathrm{in} \subset \sX$ but
$\Gamma_\mathrm{side},\Gamma_\mathrm{out}  \subset \RR^d \setminus \sX$.

\subsubsection*{Flow Field}
Define $\Phi_{s,t}(x)$ as the position at time $t$ of a particle moving with the velocity field $u$ starting
from $x$ at time $s$.  It is assumed that
\begin{itemize}
\item There exists a $t_0>0$ such that, for all $t\geq 0$ and $x\in\sX$ one has $\Phi_{t,t+t_0}(x) \notin \sX$,
          that is, an upper bounded on the residence time.
\item For every $t>0$ and $x\in\sX$ there exist unique $s(t,x)$, $\xi(t,x)$ such that
         $\Phi_{s(t,x),t}\left(\xi(t,x)\right)=x$ and either $s(t,x)=0$ or $\xi(t,x)\in\Gamma_\mathrm{in}$
         (the possibility of both is not excluded).  This defines a start position for each point in the flow
         and $\xi(t,x) = \Phi_{t,s(x)}(x)$.
\item $s(t,x)$ and $\xi(t,x)$ are differentiable in $x$ and
          $\norm{\nabla s}_\infty :=\sup_{t,x} \abs{\nabla s(t,x)} < \infty$.  A bound for the derivative of
          $\xi$ is given in the appendix.
\item The set $\Xi_t = \left\{x\in\sX \colon \xi(t,x)\in\Gamma_\mathrm{in}\right\}$ forms a differentiable
          $d-1$ dimensional manifold that divides $\sX\setminus\Xi_t$ into two disjoint simply connected
          components.
\end{itemize}

\subsection{Test Function Spaces}
\begin{defn}\label{d:BY}
Let $\mathcal{B}_\mathrm{b}(\sY)$ be the space of bounded measurable functions on $\sY$
with the supremum (not essential supremum) norm, which will be written 
$\normi{\cdot}$.
\end{defn}
\begin{defn}\label{d:B}
Let $B:=\mathcal{B}_\mathrm{b}(\sX\times\sY)$ be the space of bounded measurable functions on
 $\sX\times\sY$ with the
supremum (not essential supremum) norm, which will be written 
$\normoi{\cdot}$.
\end{defn}
\begin{defn}\label{d:Bd}
Let $B^d$ be the space of $d$-dimensional vector valued functions with components in \Lpb.
This will be given the norm
$\normoid{f} := \sup_{x,y}\abs{f(x,y)}$, where $\abs{\cdot}$ is the Euclidean norm on $\RR^d$.
\end{defn}

To handle the derivative in \eqref{eq:smol} and associated boundary condition introduce
\begin{defn}\label{d:D}
\begin{equation*}
\Wopb :=
\left\{f \in B \colon f \text{ differentiable }, \nabla f \in B^d, \lim_{x \rightarrow \Gamma_\mathrm{out}}  \norm{f(x,\cdot)}_{\sY-\infty} = 0\right\}
\end{equation*}
The norm is
\begin{equation*}
\normooi{f} = \normoi{f} + \normoid{\nabla f}.
\end{equation*}
\end{defn}
This is an appropriate class of test functions to use in \eqref{eq:smol}, because the derivative
is well behaved.
For a discussion of the boundary condition see \citep{Pat13}, although that work imposes
slightly stricter regularity conditions, which are here seen to be unnecessary.

\subsection{Solution Spaces}

A particle distribution is at a minimum a measure on the product of the particle position and type spaces,
that is on $\sX\times\sY$.  The solution processes must accordingly take values in the following spaces,
which are built from the space of measures on particle types \sY:

\begin{defn}\label{d:normtv}
Let $\left(\mathcal{M}(\sY),\norm{\cdot}_{\sY-\mathrm{TV}}\right)=:\mathcal{M}(\sY)_{\mathrm{TV}}$
be the normed space of signed bounded measures on $\sY$ with the total variation norm
\begin{equation*}
\norm{\mu}_{\sY-\mathrm{TV}}:=\sup_{f\neq 0} \frac{\abs{\int_\sY f(y)\mu(\dd y)}}{\norm{f}_{\sY-\infty}},
\quad
f \in \mathcal{B}_\mathrm{b}(\sY).
\end{equation*}
\end{defn}

\begin{defn}\label{d:Mi}
Let $\Mi = \mathcal{M}(\sX\times \sY)$ be the vector space of bounded signed measures on $\sX\times \sY$.
\end{defn}
Under reasonable assumptions one expects to find solutions to \eqref{eq:smol} that are absolutely
continuous with respect to Lebesgue measure on \sX; this leads to the following space (compare  \citep{Nor14}).
\begin{defn}\label{d:Moi}
\begin{equation*}
\Moi = \left\{\mu\in\Mi \colon \mu(\dd x, \dd y) = c(x,\dd y)\dd x,
 c \in L^\infty\left(\sX,\mathcal{M}(\mathcal{\sY})_\mathrm{TV}\right) \right\}
\end{equation*}
with the norm
\begin{equation*}
\normmoi{c} =  \esssup_x \norm{c(x,\cdot)}_{\sY-\mathrm{TV}}
\end{equation*}
where a measure is identified with its density.
\end{defn}

The $B$ and $D$ dual norms on \Mi will play a role in this work 
\begin{defn}\label{d:normds}
Let $\mu \in \Mi$,
\begin{equation*}
\norm{\mu}_\mathrm{TV} \equiv
\norm{\mu}_{B^\star} := \sup_{f\neq 0} \frac{\abs{\int_{\sX\times\sY} f(x,y)\mu(\dd x,\dd y)}}{\normoi{f}},
\quad
f \in \Lpb,
\end{equation*}
and
\begin{equation*}
\norm{\mu}_{\Wopb^\star} := \sup_{f\neq 0} \frac{\abs{\int_{\sX\times\sY} f(x,y)\mu(\dd x,\dd y)}}{\normooi{f}},
\quad
f \in \Wopb.
\end{equation*}
\end{defn}
As the notation suggests, the $B^\star$ norm is the total variation norm on $\Mi$.  For calculations the
$B^\star$ point of view is emphasised, however the main results are stated in terms of TV. 
When dealing with processes the following abbreviation is useful
\begin{defn}\label{d:nnormbs}
Let $T>0$ and  $c\in L^\infty\left([0,T);\Mi \right)$ then
\begin{equation*}
\opnorm{c}_{B^\star} := \esssup_{t\in[0,T)}\norm{c(t,\cdot,\cdot)}_{B^\star}.
\end{equation*}
\end{defn}

\subsection{Coagulation}\label{s:coag}
It is now possible to set out the assumptions on the coagulation dynamics specified by  $K$ and $h$
in \eqref{eq:smol}.  $K$ is assumed to be non-negative and measurable with some bound $K_\infty>0$
such that $\sup_{y_1,y_2}K(y_1,y_2) \leq K_\infty$.

The delocalisation $h\colon \sX^2 \rightarrow \RR$ must be measurable and non-negative.
For fixed $x_1 \in \sX$ write $h_{1,x_1}$ and $h_{2,x_1}$ for the the functions given by
$h_{1,x_1}(\cdot)=h(x_1,\cdot)$ and $h_{2,x_1}(\cdot)=h(\cdot,x_1)$.  It will be assumed that neither
$K$ nor $h$ are identically zero---this would lead to a trivial problem with no coagulation.

H1: $\normoi{h_{i,x}} \leq \Cho\quad \forall x \in \sX,\ i=1,2$.

H2: H1 holds and $h(x,x_2)=\sum_{j=1}^J \chi_{j,1}(x)\chi_{j,2}(x_2)$ with $\chi_{j,i}$ positive,
       and of special bounded variation (derivative in $L^1$ plus atoms) for all $i$ and $j$
       with the number of atoms in the weak derivatives
       bounded.   Further one has
       $\sup_{x,x_2} \sum_{j=1}^J \normoi{\chi_{j,2}}\int_r^t
                     \abs{\nabla \chi_{j,1}\left(\Phi_{r,s}(x),x_2\right)}\dd s
                     \leq \Chd t_0 e^{\opnorm{\nabla u} \min(t-r,t_0)}$
        and its symmetric counterpart
       $\sup_{x_1,x} \sum_{j=1}^J \normoi{\chi_{j,1}}\int_r^t
                     \abs{\nabla \chi_{j,2}\left(x_1,\Phi_{r,s}(x)\right)}\dd s
                     \leq \Chd t_0 e^{\opnorm{\nabla u} \min(t-r,t_0)}$
                     
H3: H1 holds and the $h_{i,x}$ are in \Wopb with$\normoi{\pderiv{\xi}h_{i,x}(\xi)} \leq \Chd\ \forall x \in \sX, i=1,2$.  It should be noted that H3 implies H2 (Proposition~\ref{p:dxPhi2} is helpful here).

The function $h$ parametrises the numerical methods that lie behind this work \citep{Pat12}.  H2 is
describes the case where the spatial domain is partitioned into cells and coagulation is only simulated
between particles that are in the same cell.  From a software point of view this is somewhat simpler than
dealing with functions satisfying H3.  In one dimension, which was the case simulated in \citep{Pat12}, 
H2 is a weak integrability condition on the derivative of $h$.

\subsection{Inception}\label{s:incep}

Particles are added to the system with intensity given by signed measures $I_t \in \Mi$.

I1: $\sup_t \norm{I_t}_{\Lpb^\star} < \infty$ and $I \in C([0,\infty),\left(\Mi,\norm{\cdot}_{D^\star}\right))$.

I2: I1 holds, the $I_t$ are non-negative measures and for every $f\in\Lpb$
\begin{equation*}
\int_{\sX\times\sY}f(x,y)I_t(\dd x, \dd y) =
\int_{\sX\times\sY}f(x,y)I_\mathrm{int}(t,x, \dd y)\dd x
+ \int_{\Gamma_\mathrm{in}\times\sY}f(\xi,y)I_\mathrm{bdry}(t,\xi, \dd y)\dd \xi
\end{equation*}
with $I_\mathrm{int}\in C\left([0,\infty),\Moi\right)$ also
$I_\mathrm{bdry} \in C\left([0,\infty),L^\infty\left(\Gamma_\mathrm{in},\mathcal{M}(\mathcal{\sY})_\mathrm{TV}\right)\right)$
with the respective norms uniformly bounded for all time and with some $I_\ast>0$ such that
$\norm{I_\mathrm{bdry}(t,\xi, \cdot)}_{\sY-\mathrm{TV}} \leq I_\ast u_t(\xi)\cdot n(\xi)$ for all $t$ and
$\xi\in\Gamma_\mathrm{in}$.

I3: I2 holds, $I_\mathrm{bdry}$ has a time and space derivative so that $I_\mathrm{bdry} \in C^1\left([0,\infty)\times \Gamma_\mathrm{in},\mathcal{M}(\mathcal{\sY})_\mathrm{TV}\right)$ and
$I_\mathrm{int}$ has an \sX-derivative which is $\nabla I_\mathrm{int}\in C\left([0,\infty),(\Moi)^d\right)$

\subsection{Statements of the Theorems}\label{s:results}
These results progress from local existence and uniqueness of a measure valued solution to a global result and then existence followed by differentiability of a density for the measures.

\begin{thrm}\label{t:locexist}
Assume H2 or H3 holds and that $c_0 \in \Mi$, then there exists a $T=T(c_0)$ such there is a
unique solution $c_t$ to \eqref{eq:smol} in $L^\infty\left([0,T),\left(\Mi,\norm{\cdot}_\mathrm{TV}\right)\right)$ with initial condition $c_0$ and this solution is in
$C\left([0,T),\left(\Mi,\norm{\cdot}_\mathrm{TV}\right)\right) \cap C^1\left((0,T),\left(\Mi,\norm{\cdot}_{D^\star}\right)\right)$.

Additionally, there is no time interval on which more than one TV-bounded solution exists for
a given initial condition.  If solutions exist on a common compact time interval for at two or more
initial conditions, then the solutions are Lipschitz continuous with respect to the initial
data in the TV-norm on this compact time interval.
\end{thrm}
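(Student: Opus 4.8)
The plan is to recast \eqref{eq:smol} as an integral (mild) equation in the Banach space $(\Mi,\norm{\cdot}_\mathrm{TV})$ and then run a Picard iteration. The transport and outflow are handled by the propagator family $U_{s,t}$ acting on measures, which is the predual of the pushforward along the flow $\Phi$; its action on a test function $f\in\Wopb$ is essentially $f\mapsto f\circ\Phi_{s,t}$ with the appropriate Jacobian, and the outflow boundary condition on $\Wopb$ is exactly what makes this well defined (mass leaving through $\Gamma_\mathrm{out}$ is simply lost). One writes the coagulation plus inception terms as a map $R_t(\mu)$, so \eqref{eq:smol} becomes
\begin{equation*}
c_t = U_{0,t}c_0 + \int_0^t U_{s,t}\bigl(R_s(c_s)\bigr)\,\dd s .
\end{equation*}
First I would record the boundedness of $U_{s,t}$ on $\Mi$ in TV-norm, uniformly for $s\le t\le s+t_0$ and more generally with a bound growing like $e^{C(t-s)}$, using the velocity-field assumptions (in particular $\norm{\nabla\cdot u}_\infty$ controls how the Jacobian distorts mass); this is the content of the earlier ``local Lipschitz property for the propagators'' referred to in the abstract, and I would cite it.

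The second step is the quadratic estimate on $R_t$. Because $K$ is bounded by $K_\infty$ and $h$ by $\Cho$ (the only part of H2/H3 needed for this theorem), the coagulation bilinear form $Q(\mu,\nu)$ satisfies $\norm{Q(\mu,\nu)}_\mathrm{TV}\le \tfrac32 K_\infty \Cho \norm{\mu}_\mathrm{TV}\norm{\nu}_\mathrm{TV}$ — the factor comes from the three terms $f(x_1,y_1+y_2)-f(x_1,y_1)-f(x_2,y_2)$, each of which is bounded by $\normoi{f}$ after integrating against the product measure and using $\abs{h}\le\Cho$. Hence $R_t$ is locally Lipschitz on bounded TV-balls, with Lipschitz constant proportional to the radius, and $\norm{I_t}_\mathrm{TV}$ is bounded by I1. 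Standard Banach-ODE theory (e.g. the Picard argument in \citep{Pazy83}) then gives, on a ball of radius $2\norm{c_0}_\mathrm{TV}+1$ say, a time horizon $T=T(\norm{c_0}_\mathrm{TV})>0$ on which the Picard iterates converge in $C([0,T),(\Mi,\norm{\cdot}_\mathrm{TV}))$ to a fixed point, which is the unique mild solution; continuity in $t$ in TV is automatic from the integral equation, and the strong differentiability in the weaker $D^\star$-norm on $(0,T)$ follows by differentiating the integral equation, since $U_{s,t}$ is differentiable as a family of maps into $(\Mi,\norm{\cdot}_{D^\star})$ (this is where $\Wopb$, not $B$, is the right test space) and $R_s(c_s)$ is $D^\star$-continuous in $s$ using I1.

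For the last paragraph of the theorem — uniqueness among all TV-bounded solutions on any interval, and Lipschitz dependence on the initial datum — I would argue by a Grönwall estimate rather than appealing to the local fixed-point uniqueness, since a priori a competing solution need not live in the chosen ball. Given two TV-bounded solutions $c_t,\tilde c_t$ on a common compact interval $[0,S]$ with a common bound $M=\max(\opnorm{c}_{B^\star},\opnorm{\tilde c}_{B^\star})$, subtract the two integral equations, use boundedness of $U_{s,t}$ and bilinearity of $Q$ to get
\begin{equation*}
\norm{c_t-\tilde c_t}_\mathrm{TV}\le e^{CS}\norm{c_0-\tilde c_0}_\mathrm{TV} + \int_0^t e^{C(t-s)}\,3K_\infty\Cho M\,\norm{c_s-\tilde c_s}_\mathrm{TV}\,\dd s ,
\end{equation*}
and Grönwall yields $\norm{c_t-\tilde c_t}_\mathrm{TV}\le C' \norm{c_0-\tilde c_0}_\mathrm{TV}$ with $C'$ depending only on $S$, $M$ and the fixed constants; taking $c_0=\tilde c_0$ gives uniqueness on $[0,S]$, and since $S$ was arbitrary this rules out two distinct TV-bounded solutions on any interval, and the general inequality is the claimed Lipschitz continuity. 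I expect the genuinely delicate point to be not any of these estimates in isolation but the construction and bookkeeping of the propagator $U_{s,t}$ on $\Mi$ with the outflow boundary — making precise that ``mass that has left $\sX$ stays gone'', that $U_{s,t}$ maps $\Moi$-type densities sensibly, and that it is differentiable into $(\Mi,\norm{\cdot}_{D^\star})$ with the stated continuity in $(s,t)$; but by the structure of the paper that analysis is carried out before this theorem and can be invoked here.
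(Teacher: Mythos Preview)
Your route is valid but differs from the paper's. You treat \eqref{eq:smol} as a semilinear problem: the transport propagator $\widetilde U^{t,s}$ alone handles the linear part and coagulation enters as a quadratic source $R_s(c_s)$, so the fixed-point map is the classical Duhamel expression $c\mapsto \widetilde U^{t,0}c_0+\int_0^t\widetilde U^{t,s}R_s(c_s)\,\dd s$. The paper instead follows Kolokoltsov and absorbs the coagulation into a \emph{path-parameterised} linear propagator $\widetilde A^{t,s}[\mu]$ (predual of Definition~\ref{d:Asprop}); the fixed-point map is $\Psi_{c_0}(\mu)(t)=\widetilde A^{t,0}[\mu]c_0+\int_0^t\widetilde A^{t,s}[\mu]I_s\,\dd s$ (Definition~\ref{d:Psi}), and contractivity comes from a Lipschitz estimate for $\mu\mapsto\widetilde A^{t,s}[\mu]$ (Proposition~\ref{p:lipsch}) rather than directly from the bilinearity of $Q$. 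Your approach is more elementary; the paper's buys a propagator that is positivity-preserving and contracting on the positive cone (Proposition~\ref{p:pospres}), machinery which is reused verbatim for Theorems~\ref{t:globexist}--\ref{t:globdiff}.

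There is, however, one step you leave implicit that is not routine. For uniqueness among \emph{all} TV-bounded weak solutions and for the Lipschitz dependence you ``subtract the two integral equations'', tacitly assuming that every TV-bounded weak solution of \eqref{eq:smol} already satisfies your mild equation. This direction is not automatic; in the paper it is isolated as Proposition~\ref{p:fixedsoln} and is the \emph{only} place where H2 or H3 (rather than the bare bound H1) is invoked, because the paper's argument differentiates $r\mapsto\langle A^{r,t}[c]f,c_r\rangle$ and therefore needs $A^{r,t}[c]$ to preserve $\Wopb$ (Proposition~\ref{p:ApropD}). In your formulation the analogous step differentiates $r\mapsto\langle U^{r,t}f,c_r\rangle$, and since $U^{r,t}$ preserves $\Wopb$ by Proposition~\ref{p:Ubound} irrespective of $h$, your parenthetical remark that only the bound $\abs{h}\le\Cho$ is really needed may well be correct for your approach---but the product-rule computation (with $c$ only $D^\star$-continuous a priori) must be written out explicitly, since it carries the entire uniqueness claim and is precisely the point the paper flags as delicate.
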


In the physically reasonable setting of non-negative particle numbers, the previous result holds
for all time:
\begin{thrm}\label{t:globexist}
The $T=T(c_0)$ from the previous theorem is $\infty$ if $c_0$ and the $I_t$ are non-negative measures.
\end{thrm}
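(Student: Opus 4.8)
The plan is to show that the local solution from Theorem~\ref{t:locexist} can be continued indefinitely by ruling out finite-time blow-up of the TV norm. Since Theorem~\ref{t:locexist} already delivers a unique local solution and, crucially, a local existence time $T(c_0)$ that depends on $c_0$ only through a bound on $\norm{c_0}_\mathrm{TV}$ (this uniformity is implicit in the Picard construction, where the contraction radius is governed by the initial norm), it suffices to obtain an a priori bound: for non-negative $c_0$ and non-negative $I_t$, any solution existing on $[0,T)$ satisfies $\sup_{t<T}\norm{c_t}_\mathrm{TV} \leq M(T) < \infty$. Given such a bound, the standard continuation argument applies — if the maximal existence time $T^\ast$ were finite, one could restart the local existence theorem from a time close to $T^\ast$ with a uniformly bounded initial condition and extend past $T^\ast$, contradicting maximality.

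First I would establish propagation of non-negativity: if $c_0 \geq 0$ and each $I_t \geq 0$, then $c_t \geq 0$ for all $t$ in the existence interval. The cleanest route is to revisit the Picard iteration used for Theorem~\ref{t:locexist} and check that the map whose fixed point is $c_t$ preserves the cone of non-negative measures — the propagator associated with the linear advection-plus-outflow part is positivity-preserving (it transports mass along streamlines and only removes mass at $\Gamma_\mathrm{out}$), the inception term adds the non-negative $I_t$, and the coagulation source, while containing the negative gain-depletion pieces $-f(x_1,y_1)-f(x_2,y_2)$, acts on a non-negative measure in a way that (by the usual Smoluchowski structure) keeps the iterates non-negative provided one treats the depletion as a multiplicative/Duhamel term rather than an additive perturbation. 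Since the fixed point is the limit of these iterates in TV norm and the non-negative cone is TV-closed, the limit is non-negative.

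Second, with non-negativity in hand, I would test \eqref{eq:smol} against $f \equiv 1$ (strictly, against a sequence in $D$ approximating the constant $1$, using the outflow boundary decay built into $D$ and a cutoff near $\Gamma_\mathrm{out}$). For $f=1$ one has $\nabla f = 0$, the coagulation bracket $f(x_1,y_1+y_2)-f(x_1,y_1)-f(x_2,y_2) = 1-1-1 = -1 \leq 0$, so the coagulation contribution is non-positive, and one is left with
\begin{equation*}
\deriv{t}\norm{c_t}_\mathrm{TV} = \deriv{t}\int_{\sX\times\sY} 1\, c_t(\dd x,\dd y) \leq \int_{\sX\times\sY} 1\, I_t(\dd x,\dd y) = \norm{I_t}_\mathrm{TV} \leq \sup_s \norm{I_s}_{\Lpb^\star},
\end{equation*}
where I have used that for non-negative measures the TV norm is the total mass and that the outflow/advection term integrates to something non-positive (mass leaves through $\Gamma_\mathrm{out}$, none enters except via $I_t$). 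Integrating gives $\norm{c_t}_\mathrm{TV} \leq \norm{c_0}_\mathrm{TV} + t\sup_s\norm{I_s}_{\Lpb^\star}$, which is finite on every bounded interval. This is the desired a priori bound, and it is in fact linear in $t$.

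The main obstacle is the approximation argument needed to test against the constant function, since $1 \notin D$ (it does not vanish on $\Gamma_\mathrm{out}$) and the derivation of the mass identity must correctly account for the boundary flux out of $\Gamma_\mathrm{out}$ — one must verify that replacing $1$ by a function that is $1$ away from the outflow boundary and tapers to $0$ at $\Gamma_\mathrm{out}$ only \emph{decreases} the mass functional's time derivative in the limit (so the inequality survives), and that the advection term's contribution passes to the limit as ``pure outflow'' $\leq 0$ rather than acquiring a spurious positive piece. The residence-time and streamline-structure assumptions of Section~\ref{s:flowass}, together with the definition of $D$, are exactly what makes this controllable. Everything else — propagation of positivity and the continuation step — is routine once the mass inequality is in place.
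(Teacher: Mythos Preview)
Your approach is correct but takes a different route from the paper. The paper does not derive a mass inequality by testing against an approximation of $f\equiv 1$; instead it stays entirely inside the fixed-point framework. The key observation (Proposition~\ref{p:pospres}) is that when the parametrising path $\mu$ is non-negative, the propagator $\widetilde{A}^{t,s}[\mu]$ is not merely positivity-preserving but is a \emph{contraction} on the positive cone of $\Mi$ (coagulation only reduces total particle number). Feeding this into the definition of $\Psi_{c_0}$ gives directly (Proposition~\ref{p:Psiproppos})
\[
\norm{\Psi_{c_0}(\mu)(t)}_{B^\star}\leq \norm{c_0}_{B^\star}\II(t\leq t_0)+\min(t,t_0)\sup_s\norm{I_s}_{B^\star},
\]
which, thanks to the finite residence time $t_0$, is bounded \emph{uniformly} in $t$ by $M=\norm{c_0}_{B^\star}+t_0\sup_s\norm{I_s}_{B^\star}$. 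Hence one may take $\tau_M=\infty$ in Proposition~\ref{p:contrcontain} and run the contraction argument once, globally, on $L^\infty([0,\infty),E_M\cap\Mi^+)$---no continuation step is needed.

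Your route yields only a linear-in-$t$ bound and requires the approximation argument for $1\notin D$ that you flag as the main obstacle (it does go through: choosing $f_\epsilon$ depending only on $x$ makes the coagulation bracket $-f_\epsilon(x_2)\leq 0$ for every $\epsilon$, and for small enough $\epsilon$ one has $u_t\cdot\nabla f_\epsilon\leq 0$ pointwise near $\Gamma_\mathrm{out}$, so no limit in that term is needed, only the sign). The paper's argument sidesteps this entirely and extracts the sharper, time-uniform estimate by exploiting $t_0<\infty$; your argument, on the other hand, would survive without the finite residence time hypothesis. Note also that your positivity step implicitly needs exactly the first half of Proposition~\ref{p:pospres} (that $\widetilde{A}^{t,s}[\mu]$ preserves the positive cone when $\mu\geq 0$), so the two proofs share that ingredient.
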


\begin{thrm}\label{t:globdens}
Assume H2 or H3 holds, that $c_0$ is in the positive cone of \Moi, and that I2 is satisfied,
then \eqref{eq:smol} has a unique solution,
which is in $L^\infty\left([0,\infty),\Moi\right)$ and therefore has a density in
$L^\infty\left([0,\infty)\times\sX,\mathcal{M}(\sY)_\mathrm{TV}\right)$ starting from $c_0$.
\end{thrm}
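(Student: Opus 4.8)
The plan is to combine the global existence and uniqueness already obtained in Theorems~\ref{t:locexist} and~\ref{t:globexist} with an a~priori estimate showing that the $\Moi$-norm is propagated: if $c_0 \in \Moi$ and I2 holds, then the unique TV-solution actually lies in $L^\infty([0,\infty),\Moi)$, i.e.\ $c_t$ remains absolutely continuous in $x$ with an essentially bounded density in $\mathcal{M}(\sY)_\mathrm{TV}$. Since Theorem~\ref{t:globexist} already gives a unique global solution $c_t \in C([0,\infty),(\Mi,\norm{\cdot}_\mathrm{TV}))$ from the non-negative initial datum $c_0$ and non-negative inception $I_t$ (guaranteed by I2), the only thing left is to control the $\Moi$-norm, and for that it suffices to control it on a single residence-time interval $[0,t_0]$ and then iterate, because by the bounded-residence-time assumption every particle present at time $t_0$ entered after time $0$ either through $\Gamma_\mathrm{in}$ or via $I_\mathrm{int}$, so the structure renews.

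First I would track the three mechanisms separately along the flow $\Phi_{s,t}$. Transport by $u_t$ moves mass along streamlines; by the change-of-variables/Liouville estimate the Jacobian $\det \nabla \Phi_{s,t}$ is bounded below and above on $[0,t_0]$ by $e^{\mp\norm{\nabla\cdot u}_\infty t_0}$, so pure advection maps a density bounded by $M$ in $\Moi$-norm to one bounded by $M e^{\norm{\nabla\cdot u}_\infty t_0}$ — in particular absolute continuity in $x$ is preserved and the density stays essentially bounded. The interior source $I_\mathrm{int}\in C([0,\infty),\Moi)$ contributes, over a time $t_0$, at most $t_0 \sup_t \normmoi{I_\mathrm{int}(t,\cdot,\cdot)}$ after the same Jacobian distortion, so it keeps us inside $\Moi$. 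The boundary inflow $I_\mathrm{bdry}$ injects mass on the $(d-1)$-manifold $\Gamma_\mathrm{in}$, which is $x$-Lebesgue-null, so one must check it produces an absolutely continuous contribution once transported inward: here the hypothesis $\norm{I_\mathrm{bdry}(t,\xi,\cdot)}_{\sY-\mathrm{TV}} \le I_\ast\, u_t(\xi)\cdot n(\xi)$ is exactly what is needed — pushing forward the surface measure $I_\mathrm{bdry}(t,\xi,\cdot)\,\dd\xi$ along $\Phi$ and using the co-area-type change of variables, the normal-velocity factor $u_t(\xi)\cdot n(\xi)$ cancels the singularity of the flux through $\Gamma_\mathrm{in}$ and yields a bona fide $L^\infty(\sX,\mathcal{M}(\sY)_\mathrm{TV})$ density, with norm controlled by $I_\ast$ and $\norm{\nabla s}_\infty$. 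The coagulation term is the easiest for this particular estimate: it is a bounded bilinear map in TV-norm with constant $\le K_\infty \Cho$, it adds no spatial singularity (the gain term is a pushforward of the product, which is absolutely continuous whenever each factor is, with density bounded by $K_\infty\Cho M^2$ times the local volume), and the loss term only decreases mass pointwise.

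Assembling these, on $[0,t_0]$ the density $c(t,\cdot,\cdot)$ satisfies a Gr\"onwall-type inequality
\begin{equation*}
\normmoi{c(t,\cdot,\cdot)} \le e^{\norm{\nabla\cdot u}_\infty t_0}\Bigl(\normmoi{c_0} + t_0 \sup_s \normmoi{I_\mathrm{int}(s,\cdot,\cdot)} + C\, I_\ast \sup_s \norm{I_\mathrm{bdry}(s,\cdot,\cdot)}_{L^\infty} + K_\infty\Cho \int_0^t \normmoi{c(r,\cdot,\cdot)}^2\,\dd r\Bigr),
\end{equation*}
and since the total mass $\norm{c_t}_\mathrm{TV}$ is already known to be globally bounded (non-negativity plus the mass balance used in Theorem~\ref{t:globexist}), the quadratic coagulation term is in fact dominated by a linear one with a globally bounded coefficient, so $\normmoi{c(t,\cdot,\cdot)}$ stays finite on $[0,t_0]$. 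Restarting from $c_{t_0}\in\Moi$ and iterating over $[kt_0,(k+1)t_0]$ gives the uniform-in-time bound, hence $c\in L^\infty([0,\infty),\Moi)$; uniqueness is inherited from Theorem~\ref{t:locexist} since an $\Moi$-solution is in particular a TV-bounded solution. The main obstacle I anticipate is the boundary-inflow step: making rigorous that the surface-supported measure $I_\mathrm{bdry}$, once carried into the interior by the flow, is absolutely continuous with respect to $\dd x$ with an $L^\infty$ density — this is where the precise geometric hypotheses on $\Gamma_\mathrm{in}$, the manifold $\Xi_t$, the bound $\norm{\nabla s}_\infty<\infty$, and crucially the scaling $\norm{I_\mathrm{bdry}}_{\sY-\mathrm{TV}}\le I_\ast\, u_t(\xi)\cdot n(\xi)$ all have to be used together, and it is the one place where a genuine (co-area / flux) computation rather than a soft argument is required.
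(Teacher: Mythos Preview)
Your ingredients are essentially the same as the paper's: the Jacobian estimate for transport, the co-area/flux computation for boundary inflow, and the observation that coagulation is spatially local in the first particle's position so it preserves absolute continuity in $x$. You also correctly flag the boundary-inflow step as the only place requiring a genuine computation; the paper carries this out in Propositions~\ref{p:bdrydens2}--\ref{p:bdrydens}, using exactly the determinant identity and the scaling hypothesis in I2 you anticipate.

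Where the paper differs is in how these pieces are assembled. Rather than a Gr\"onwall inequality on $\normmoi{c_t}$, the paper uses the Duhamel representation
\[
c_t = \widetilde{A}^{t,0}[c]\,c_0 + \int_0^t \widetilde{A}^{t,s}[c]\,I_s\,\dd s,
\]
which holds for \emph{any} TV-bounded solution by Proposition~\ref{p:fixedsoln}, and then bounds each term in $\Moi$ directly via Propositions~\ref{p:posdensprop} and~\ref{p:bdrydens}. The point is that those operator estimates on $\widetilde{A}^{t,s}[c]\colon\Moi\to\Moi$ require only that the \emph{parameter} $c$ lie in $L^\infty([0,\infty),\Mi^+)$, which is already known; so there is no circularity. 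Your Gr\"onwall, by contrast, presupposes that $c_t\in\Moi$ in order to speak of $\normmoi{c_t}$ at all, and this is exactly what you are trying to establish. You could close this by running the estimate along the Picard iterates or by a continuation argument, but it is an extra step the propagator route avoids.

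A second, smaller difference: the paper exploits positivity more sharply. Proposition~\ref{p:posdensprop} shows that for $c\in\Mi^+$ the propagator norm on $\Moi^+$ is just $e^{\norm{\nabla\cdot u}_\infty(t-s)}\II(t-s\leq t_0)$, with \emph{no} coagulation contribution in the exponent (because coagulation only reduces the local type-total $c(x,\sY)$). Combined with the indicator $\II(t-s\leq t_0)$, this gives the uniform-in-time $\Moi$ bound in one stroke, without iterating over blocks $[kt_0,(k+1)t_0]$ and without ever invoking the TV bound on $c_t$ to linearise a quadratic term.
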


\begin{thrm}\label{t:globdiff}
Assume that $c_0 \in W^{1,\infty}\left(\sX,\mathcal{M}(\sY)_\mathrm{TV}\right)$ is consistent with the
boundary condition given below, that I3 is satisfied, and further
that either H3 holds and \sX has a sufficiently regular boundary  or $d=1$, H2 holds and $u$ is bounded
away from 0, then \eqref{eq:smol} has a unique solution $c$ with a density in
$W^{1,\infty}\left([0,\infty)\times\sX,\mathcal{M}(\sY)_\mathrm{TV}\right)$, satisfying the boundary condition
$-u_t(x)\cdot n(x) c(t,x,\dd y) = I_\mathrm{bdry}(t,x,\dd y)\quad \forall t\in\RRP,\ x\in\Gamma_\mathrm{in}$
and with initial condition $c_0$.
\end{thrm}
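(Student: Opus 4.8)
The plan is to upgrade the density solution $c$ from Theorem~\ref{t:globdens} (which already lives in $L^\infty([0,\infty),\Moi)$) to one with a spatial weak derivative in $L^\infty$, uniformly in time. The natural device is to differentiate the mild/weak formulation \eqref{eq:smol} formally in $x$ and show that the resulting equation for $v := \nabla c$ is again of the same semi-linear type, so that the Picard machinery of Theorem~\ref{t:locexist} applies to the pair $(c,v)$. Concretely, I would first set up the transport semigroup along the flow $\Phi_{s,t}$ acting on $\Moi$, track how $\nabla$ commutes with it (this produces the factors $\opnorm{\nabla u}$ and $\norm{\nabla s}_\infty$ that appear throughout the assumptions), and check that the inception term contributes $\nabla I_\mathrm{int}$ and, through the boundary, a term controlled by the space derivative of $I_\mathrm{bdry}$ — exactly the regularity granted by I3. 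The coagulation term differentiates into a sum in which $\nabla$ falls either on one of the $\mu$ factors (giving a bilinear expression in $(c,v)$, Lipschitz in TV by the same estimates as before) or on $h$; here is where H3, with $\normoi{\partial_\xi h_{i,x}} \le \Chd$, is used to keep that piece bounded, and where the reduction "H3 $\Rightarrow$ H2" noted after the assumptions lets the $d=1$ case run on the weaker hypothesis.

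The second block of the argument is to verify the boundary condition. Along each streamline the density satisfies, in the classical sense once we know it is differentiable, a linear transport ODE with the coagulation and interior inception terms as inhomogeneity; integrating from the inflow boundary $\Gamma_\mathrm{in}$ and using the divergence of $u$ (controlled by $\norm{\nabla\cdot u}_\infty$) gives the representation of $c(t,x,\cdot)$ as a transported boundary datum plus a Duhamel integral. The prescribed inflow flux $-u_t(x)\cdot n(x)\,c(t,x,\dd y) = I_\mathrm{bdry}(t,x,\dd y)$ is then read off from I2, where precisely the compatibility $\norm{I_\mathrm{bdry}(t,\xi,\cdot)}_{\sY-\mathrm{TV}} \le I_\ast\, u_t(\xi)\cdot n(\xi)$ ensures the quotient defining $c$ at the boundary stays bounded. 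The hypothesis that $c_0$ is "consistent with the boundary condition" is what makes the constructed $v$ match up at $t=0$ with the spatial derivative of $c_0$, so that $c \in W^{1,\infty}([0,\infty)\times\sX,\mathcal{M}(\sY)_\mathrm{TV})$ rather than merely piecewise smooth with a jump across $\Xi_t$.

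For the one-dimensional alternative I would argue separately that, even though $h$ is only of special bounded variation under H2, the atoms of $\nabla h$ sit along finitely many streamlines (the cell boundaries), and the hypothesis that $u$ is bounded away from $0$ means each such streamline is crossed transversally in time bounded by $t_0$; the time-integrated bound in H2, $\sup \sum_j \normoi{\chi_{j,2}} \int_r^t |\nabla\chi_{j,1}(\Phi_{r,s}(x),x_2)|\,\dd s \le \Chd t_0 e^{\opnorm{\nabla u}\min(t-r,t_0)}$, is exactly the statement that these crossings contribute a bounded-variation-in-time, hence integrable, forcing to the equation for $v$. So in $d=1$ one gets $\partial_x c \in L^\infty$ with $\partial_t c$ of bounded variation rather than continuous, which is the weaker "bounded variation strong solution" alluded to in the abstract; under H3 in general $d$ the forcing is continuous and one gets the genuine $W^{1,\infty}$ statement. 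Uniqueness throughout is inherited from Theorem~\ref{t:locexist}, since any $W^{1,\infty}$ solution is in particular a TV-bounded weak solution.

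The main obstacle I expect is the boundary analysis: making rigorous the passage from the weak formulation \eqref{eq:smol} to a pointwise-along-streamlines transport equation requires controlling the change of variables $x \mapsto (s(t,x),\xi(t,x))$ and its Jacobian near $\Gamma_\mathrm{in}$ and near the dividing manifold $\Xi_t$, and then showing the two simply connected components of $\sX\setminus\Xi_t$ glue together in $W^{1,\infty}$ — which is precisely where the regularity of $\partial\sX$ (in the H3 case) or the $d=1$ geometry is indispensable. The coagulation and inception differentiations, by contrast, are routine given H3/H2 and I3.
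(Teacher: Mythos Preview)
Your overall strategy---differentiate \eqref{eq:smol} in $x$ and solve the resulting equation for $v=\nabla c$---is the paper's strategy too, but the execution differs in two structural ways. First, the paper does not run a joint Picard iteration on $(c,v)$: since $c$ is already uniquely determined by Theorem~\ref{t:globdens}, the equation for the derivative is \emph{linear} in $\nu=\nabla c$ once $c$ is fixed, and the paper solves it directly via the propagator formula $\nu_t=\widetilde V^{t,0}[c]\nu_0+\int_0^t\widetilde V^{t,s}[c]J_s[c]\,\dd s$ (Proposition~\ref{p:uniqderiv}). Second, the paper inserts an intermediate step (Proposition~\ref{p:solnbv}) establishing $c_t\in\Mbv$ for all $t$---the weak derivative exists as a signed measure---before upgrading that measure to an $L^\infty$ density. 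Your proposal jumps over this, and it is not clear how a Picard map on pairs in $\Moi\times\Moi^d$ would even be defined when the instantaneous forcing $J_t[c]$ has a singular part.

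Your treatment of the $d=1$, H2 case has a genuine gap. You conclude that the atoms of $\nabla h$ give a BV-in-time forcing and hence only ``$\partial_t c$ of bounded variation rather than continuous''. But the theorem asserts full $W^{1,\infty}$ in this case as well, and the mechanism you are missing is this: each atom $\alpha_k(t)\delta_{a_k}$ in $\nabla_x h(x,\xi)(\dd x)$ is a source that is singular in \emph{space}, not time, and it enters the derivative equation in exactly the same way the boundary inception $I_\mathrm{bdry}\,\delta_0$ enters the original equation. The same smoothing that turns boundary inception into an $L^\infty$ density---time-integration of a transported Dirac mass, which in $d=1$ with $\inf u>0$ sweeps out an interval with density bounded by $1/\inf|u|$ times propagator factors (Propositions~\ref{p:bdrydens2}\&\ref{p:bdrydens})---applies verbatim to each $\delta_{a_k}$. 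This is the content of the last paragraph of the proof of Proposition~\ref{p:uniqderiv} and is the essential use of ``$u$ bounded away from $0$''; the H2 integral bound you cite is used elsewhere (Proposition~\ref{p:Aderivterm}, to make $A^{r,t}[\mu]$ preserve \Wopb), not here. Once $\partial_x c\in L^\infty$, the equation itself gives $\partial_t c\in L^\infty$.

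For the boundary condition the paper's argument is shorter than your streamline reconstruction: it tests the already-differentiable solution against $f_\epsilon(x)=\frac{\epsilon-x_1}{\epsilon}\II\{x_1\le\epsilon\}$ and sends $\epsilon\to 0$ (Proposition~\ref{p:bdrycon}).
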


As a corollary of the preceding two results an earlier result by the author, which demonstrated the
existence of converging sub-sequences of stochastic approximations to solutions \eqref{eq:smol} can be
extended to a full convergence result:
\begin{thrm}\label{t:stochconv}
The stochastic jump processes studied in \citep{Pat13}, which have $\sX=[0,L)$ for some $L>0$
and satisfy H2 and I3 converge to the unique solution of \eqref{eq:smol} and this weak solution
is also a strong solution in the Sobolev
space $W^{1,\infty}\left([0,\infty)\times[0,L),\mathcal{M}(\sY)_\mathrm{TV}\right)$
provided that the initial condition $c_0$ is in $W^{1,\infty}\left([0,L),\mathcal{M}(\sY)_\mathrm{TV}\right)$
with $u_0(0)c_0(0,\dd y) = I_\mathrm{bdry}(0,0,\dd y)$.  Further one has
$u_t(0)c(t,0,\dd y) = I_\mathrm{bdry}(t,0,\dd y)$ for all $t$.
\end{thrm}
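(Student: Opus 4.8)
\textbf{Proof proposal for Theorem~\ref{t:stochconv}.}

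The plan is to read Theorem~\ref{t:stochconv} off the preceding results once the setting of \citep{Pat13} has been matched to the hypotheses of Sections~\ref{s:flowass}--\ref{s:incep}. Recall that \citep{Pat13} establishes, for $\sX=[0,L)$, that the laws of the stochastic jump processes form a tight family on the space of measure-valued paths and that every weak subsequential limit is, almost surely, a weak solution of \eqref{eq:smol} with the prescribed initial datum $c_0$ and inception $I$. Being limits of empirical measures of finitely many particles, these limit points are non-negative and bounded in total variation uniformly in the approximation parameter (the inflow and outflow terms control the mass), so on each finite horizon $[0,T)$ they belong to the solution class $L^\infty\left([0,T),\left(\Mi,\norm{\cdot}_\mathrm{TV}\right)\right)$ of Theorem~\ref{t:locexist}. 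Since H2 and I3 hold, and I3 implies I2 which implies I1, Theorem~\ref{t:locexist} together with Theorem~\ref{t:globexist} shows there is exactly one TV-bounded solution on $[0,\infty)$ for the given non-negative data. Hence all subsequential limits agree with this solution; a tight family of random variables whose subsequential limits are all equal to one deterministic object converges to that object, and since the limit is deterministic this convergence is in probability. This is the claimed convergence.

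For the regularity statement I would then observe that, as $c_0\in W^{1,\infty}\left([0,L),\mathcal{M}(\sY)_\mathrm{TV}\right) \subset \Moi$ lies in the positive cone and I2 holds, Theorem~\ref{t:globdens} identifies the limiting solution as an element of $L^\infty\left([0,\infty),\Moi\right)$, so it has a density. With $d=1$, H2 in force, the velocity field of \citep{Pat13} bounded away from $0$ on $[0,L)$, I3 satisfied, and $c_0$ consistent with the boundary condition, Theorem~\ref{t:globdiff} applies and upgrades this density to one in $W^{1,\infty}\left([0,\infty)\times[0,L),\mathcal{M}(\sY)_\mathrm{TV}\right)$ satisfying $-u_t(x)\cdot n(x)\,c(t,x,\dd y)=I_\mathrm{bdry}(t,x,\dd y)$ on $\Gamma_\mathrm{in}$. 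On $[0,L)$ the flow enters through the left endpoint, so $\Gamma_\mathrm{in}=\{0\}$ with outward normal $n(0)=-1$; the boundary relation therefore reads $u_t(0)\,c(t,0,\dd y)=I_\mathrm{bdry}(t,0,\dd y)$ for every $t\geq 0$, the case $t=0$ being exactly the compatibility hypothesis imposed on $c_0$.

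The work that remains --- and the only place where anything can go wrong --- is the translation step. One must verify that the structural assumptions under which \citep{Pat13} is formulated (bounded residence time on $[0,L)$, a velocity bounded above and away from zero, the cell-partitioned delocalisation kernel, and the interior-plus-inflow source) genuinely imply the list of assumptions in Sections~\ref{s:flowass}--\ref{s:incep}, in particular that the cell-based $h$ satisfies H2 and the source satisfies I3; one must also check that the weak-solution concept used there coincides with the one made precise in Theorem~\ref{t:locexist}, extending the (slightly more regular) test class of \citep{Pat13} to all of \Wopb by approximation, and supplying the uniform-in-approximation total variation bound that places the limit in the right space. None of this is deep, but it is the substance of the proof; once it is in place the corollary is immediate from Theorems~\ref{t:globdens} and \ref{t:globdiff} together with the elementary fact about tight families possessing a unique limit point.
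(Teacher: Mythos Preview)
Your proposal is correct and follows essentially the same route as the paper: invoke the compactness/subsequential convergence result from \citep{Pat13}, use the uniqueness furnished by Theorems~\ref{t:locexist}--\ref{t:globexist} to conclude that all limit points coincide and hence the full sequence converges, and then apply Theorem~\ref{t:globdiff} for the $W^{1,\infty}$ regularity and boundary condition. The paper's proof is much terser (three sentences), omitting your explicit invocation of Theorem~\ref{t:globdens} and your discussion of the translation of hypotheses, but the logical skeleton is identical.
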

\begin{proof}
In \citep{Pat13} it was shown that every sequence of approximating processes has a sub-sequence
converging to a solution of \eqref{eq:smol} (a compactness result).  Theorem~\ref{t:globexist} shows
that there is only one such limit point so one has convergence and Theorem~\ref{t:globdiff} yields the
differentiability.
\end{proof}

\section{Dual Operator Estimates}\label{s:dual}
Introduce the more compact notation $\left<f,\mu\right>=\int_{\sX\times\sY}f(x,y)\mu(\dd x, \dd y)$
for $f \in B$ and $\mu\in \Mi$.  It is now helpful to seek a generator for the evolution given
in \eqref{eq:smol}, that is an operator $A_t$ such that
\begin{equation}\label{eq:Atstar}
\deriv{t}\left<f,\mu_t\right> = \left<A_t(f),\mu_t\right> +\left<f,I_t\right>.
\end{equation}
This is in fact a dual generator, because it acts on the functions not the measures.

The author emphasises his dependence on \citet{Kolok10b} for the material in this section and the first half
of the next.  The first novelty in this section is the boundary condition associated with the finite domain and
outflow, which required careful
treatment, but is not covered by the existing work.  Also the consideration of coefficients of bounded
variation (H2) is essential to treating the motivating example from \citep{Pat13} and even under this
relatively weak assumption differentiability of the solutions in one spatial dimension is established.
An additional variation from \citep{Kolok10b} appears in Proposition~\ref{p:lipsch} where some additional
problem structure is exploited and enables the fixed point methods to be applied in the $\Lpb^\star$-norm,
rather than the weaker $\Wopb^\star$-norm used in \citep{Kolok10b}.

\subsection{The Generators}
Because \eqref{eq:smol} is quadratic in $\mu$ the same must be true of the expression
$\left<A_t[\mu](f),\mu_t\right>$, which is achieved by including the path $(\mu_r)_{r\in[0,t]}$
as a parameter of $A_t$.  It is technically convenient to parametrise by the entire path, not
just $\mu_t$, because one eventually deals with propagators where the dependence cannot be
expressed in terms of $\mu$ at any finite set of time points.  One notes that
$A_t[\mu] = U_t + H_t[\mu]$ where $U$ is the transport operator and $H$ is the coagulation
operator.

\begin{defn}\label{d:Htstar}
Let $\mu\in L^\infty\left([0,T),\left(\Mi,\norm{\cdot}_{B^\star}\right)\right)$ and assume H1 holds.
The coagulation generator parametrised by $\mu$ is
$H_t[\mu] \colon B \rightarrow B$ defined by
\begin{multline}
H_t[\mu](f)(x,y)
=\frac12\int_{\sX\times\sY}h(x,x_2)f(x,y+y_2)K(y,y_2)\mu_t(\dd x_2,\dd y_2)\\
-\frac12\int_{\sX\times\sY}f(x,y)\left[h(x,x_2)+h(x_2,x) \right]K(y,y_2)\mu_t(\dd x_2,\dd y_2)
\end{multline}
for $t \in [0,T)$.
\end{defn}
This is not the only possible definition for $H_t[\mu]$, other versions also yield the desired
expression (the coagulation term from \eqref{eq:smol}) for $\left<H_t[\mu](f),\mu_t\right>$.
Each definition would lead to characterising the solutions as fixed points of a different mapping;
the definition given here seems to be the one that minimises the technical difficulties in the
following analysis.

\begin{prop}\label{p:Hbound}
Let $0 < T$ and $\mu\in L^\infty\left([0,T),\Mi\right)$ and assume H1 holds, then the operator
norm of $H_t[\mu]$ as a mapping $B \rightarrow B$ satisfies
\begin{equation*}
\esssup_t \norm{H_t[\mu]}_{B \rightarrow B} \leq \frac32 K_\infty  \Cho \opnorm{\mu}_{B^\star}.
\end{equation*}
\end{prop}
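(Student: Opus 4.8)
The plan is to estimate $\normoi{H_t[\mu](f)}$ pointwise in $(x,y)$ by bounding each of the two integrals in \defref{d:Htstar} separately, then take the supremum over $(x,y)$ and the essential supremum over $t$. First I would fix $t$ (outside a null set), $(x,y)\in\sX\times\sY$, and $f\in B$, and observe that in the first integral the integrand is, for each fixed $x_2$, the function $y_2\mapsto h(x,x_2)f(x,y+y_2)K(y,y_2)$, which is a bounded measurable function of $(x_2,y_2)$ with $\sX\times\sY$-supremum at most $\Cho\,\normoi{f}\,K_\infty$ by H1 and the bound on $K$. Integrating against $\mu_t$ and using the definition of the $B^\star$ norm (\defref{d:normds}), the first integral is bounded in absolute value by $\tfrac12 K_\infty\Cho\normoi{f}\,\norm{\mu_t}_{B^\star}$.

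For the second integral the integrand is $y_2\mapsto f(x,y)[h(x,x_2)+h(x_2,x)]K(y,y_2)$; here $|f(x,y)|\le\normoi{f}$, the bracket is bounded by $2\Cho$ using H1 applied to both $h_{1,x}$ and $h_{2,x}$, and $K\le K_\infty$, so this integral is bounded in absolute value by $\tfrac12\cdot 2\Cho K_\infty\normoi{f}\,\norm{\mu_t}_{B^\star} = K_\infty\Cho\normoi{f}\,\norm{\mu_t}_{B^\star}$. Adding the two contributions gives $\normoi{H_t[\mu](f)(x,\cdot)}\le \tfrac32 K_\infty\Cho\normoi{f}\,\norm{\mu_t}_{B^\star}$ for every $x$, hence $\normoi{H_t[\mu](f)}\le\tfrac32 K_\infty\Cho\normoi{f}\,\norm{\mu_t}_{B^\star}$. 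Dividing by $\normoi{f}$, taking the supremum over $f\neq 0$, and then the essential supremum over $t\in[0,T)$ yields $\esssup_t\norm{H_t[\mu]}_{B\to B}\le\tfrac32 K_\infty\Cho\esssup_t\norm{\mu_t}_{B^\star}=\tfrac32 K_\infty\Cho\opnorm{\mu}_{B^\star}$, which is the claim.

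I should also check the implicit assertion that $H_t[\mu]$ maps $B$ into $B$: measurability of $H_t[\mu](f)$ in $(x,y)$ follows from measurability of $h$, $K$, $f$ and Fubini/Tonelli (the integrands are bounded and $\mu_t$ is a bounded measure), and the estimate just derived shows boundedness. The main obstacle here is essentially bookkeeping rather than a deep difficulty: one must be careful that the $B^\star$ norm is exactly the total variation norm and that applying it to the partial integrands (viewed as elements of $B=\mathcal{B}_\mathrm{b}(\sX\times\sY)$, with $x$ and $y$ frozen as parameters) is legitimate — this is where the freezing of $(x,y)$ before integrating against $\mu_t(\dd x_2,\dd y_2)$ matters, and where the factor $\tfrac32$ rather than $\tfrac12+1$ being split as $\tfrac12$ and $1$ needs to be tracked correctly.
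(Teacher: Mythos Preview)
Your argument is correct and is exactly the ``immediate'' pointwise estimate the paper has in mind: bounding the first integral by $\tfrac12 K_\infty \Cho \normoi{f}\norm{\mu_t}_{B^\star}$ and the second by $K_\infty \Cho \normoi{f}\norm{\mu_t}_{B^\star}$, then taking suprema. There is nothing to add.
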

\begin{proof}
Immediate.
\end{proof}

\begin{defn}\label{d:tgen}
Let $t \in \RR$ and $f \in \Wopb$ then the transport generator $U_t \colon \Wopb \rightarrow \Lpb$
is given by
\begin{equation*}
U_tf(x,y) = u_t(x) \cdot \nabla f(x,y).
\end{equation*}
\end{defn}
One can now define $A_t[\mu] = U_t + H_t[\mu]$ as a linear operator $\Wopb \rightarrow \Lpb$.

\subsection{The Propagators}
Propagators are generalisations of semi-groups to deal with time dependent generators.
For a detailed discussion the reader is referred to \citep[Chapter 2]{Kolok10b} or
\citep[Chapter 5]{Pazy83}.  The key idea (given in the dual setting appropriate to this section)
is that a generator
$A_t$ generates a family of linear operators $A^{r,s}$ such that $A^{r,s}A^{s,t}=A^{r,t}$ and 
\begin{equation}
\deriv{t}A^{s,t}=A^{s,t}A_t, \quad \deriv{s}A^{s,t}=-A_sA^{s,t}.
\end{equation}
The goal of this section is to construct such a family of propagators for the generator $A_t[\mu]$
from the previous section.

\begin{defn}\label{d:tprop}
Let $s,t \in \RR$ and $f \in \Lpb$ and define the transport propagators
$U^{t,s}\colon \Lpb \rightarrow \Lpb$ by
\begin{equation*}
U^{s,t}f(x,y) =
\begin{cases}
f\left(\Phi_{s,t}(x),y\right) \quad &\Phi_{s,t}(x) \in \sX\\
0 &\text{otherwise}
\end{cases}
\end{equation*}
where $\Phi$ is the flow due to the velocity field $u$ (see \S\ref{s:flowass} and Appendix~\ref{s:flow}).
\end{defn}

\begin{prop}\label{p:Ubound}
Let $t \geq s$, then the transport propagator $U^{s,t}$ preserves \Wopb. The following operator norm estimates hold:
\begin{equation*}
\norm{U^{s,t}}_{\Lpb\rightarrow\Lpb} \leq \II\left(t-s\leq t_0\right),
\end{equation*}
where $\II$ is an indicator function and
\begin{equation*}
\norm{U^{s,t}}_{\Wopb\rightarrow\Wopb} \leq e^{(t-s) \opnorm{\nabla u}}\II\left(t-s\leq t_0\right).
\end{equation*}
\end{prop}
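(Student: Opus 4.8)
The plan is to verify the three assertions one at a time --- the operator bound $B\to B$, preservation of $D$, and the operator bound $D\to D$ --- the three ingredients being the bounded-residence-time hypothesis (which produces the indicator factor), the first-variation equation for the flow $\Phi$ (which produces the exponential factor), and the transversality $n(x)\cdot u_t(x)>0$ on $\Gamma_\mathrm{out}$ together with the streamline structure of \S\ref{s:flowass} (which makes the outflow condition survive the map). For the $B$-bound I would simply note that $U^{s,t}f(x,y)$ equals either $f(\Phi_{s,t}(x),y)$ or $0$, hence is measurable (as $\Phi_{s,t}$ is continuous) and obeys $\normoi{U^{s,t}f}\le\normoi f$, giving $\norm{U^{s,t}}_{B\to B}\le1$ whenever $t-s\le t_0$. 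If $t-s>t_0$ the residence-time assumption yields $\Phi_{s,s+t_0}(x)\notin\sX$ for every $x\in\sX$, and since a streamline issuing from $\sX$ leaves only across $\Gamma_\mathrm{out}$ and the backward characteristics are unique it cannot re-enter, so $\Phi_{s,t}(x)\notin\sX$ for all $x$ and $U^{s,t}f\equiv0$; hence $\norm{U^{s,t}}_{B\to B}\le\II(t-s\le t_0)$.

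For preservation of $D$ I would work on the open set $\{x\in\sX:\Phi_{s,t}(x)\in\sX\}$, where the chain rule gives $\nabla(U^{s,t}f)(x,y)=[\nabla_x\Phi_{s,t}(x)]^{\top}\,\nabla f(\Phi_{s,t}(x),y)$, while $U^{s,t}f$ vanishes on the interior of the complementary set; the two pieces join continuously along the interface $\{x\in\sX:\Phi_{s,t}(x)\in\Gamma_\mathrm{out}\}$ because $f$ satisfies $\lim_{x\to\Gamma_\mathrm{out}}\normi{f(x,\cdot)}=0$, and this interface is Lebesgue-null, so $U^{s,t}f$ carries the bounded derivative demanded by \defref{d:D}. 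To control the Jacobian $J_{s,t}(x):=\nabla_x\Phi_{s,t}(x)$ I would use the first-variation equation $\partial_tJ_{s,t}=\nabla u_t(\Phi_{s,t})\,J_{s,t}$, $J_{s,s}=\mathrm{Id}$: Gr\"onwall's inequality applied to $\norm{J_{s,t}(x)}_{\RR^d\to\RR^d}$ gives $\norm{J_{s,t}(x)}_{\RR^d\to\RR^d}\le\exp\bigl(\int_s^t\norm{\nabla u_r(\Phi_{s,r}(x))}_{\RR^d\to\RR^d}\,\dd r\bigr)\le e^{(t-s)\opnorm{\nabla u}}$ (cf.\ \propref{p:dxPhi2}). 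Hence $\normoid{\nabla(U^{s,t}f)}\le e^{(t-s)\opnorm{\nabla u}}\normoid{\nabla f}$ on $\{t-s\le t_0\}$, and $U^{s,t}f\equiv0$ otherwise.

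It remains to check the outflow condition in \defref{d:D}. For $t=s$, $U^{s,s}$ is the identity so $U^{s,s}f=f\in D$; for $t>s$ I would show $U^{s,t}f$ vanishes on a one-sided neighbourhood of $\Gamma_\mathrm{out}$. If it did not, there would be $x_n\in\sX$ with $x_n\to z\in\Gamma_\mathrm{out}$ and $\Phi_{s,t}(x_n)\in\sX$, so by the no-re-entry property each trajectory $r\mapsto\Phi_{s,r}(x_n)$ stays in $\overline{\sX}$ for $r\in[s,t]$, and in the limit $r\mapsto\Phi_{s,r}(z)$ stays in $\overline{\sX}$ on $[s,t]$; but taking a local $C^1$ defining function $\phi$ for $\partial\sX$ near $z$ (with $\phi<0$ on $\sX$) the outward rate of escape of this trajectory at time $s$ is $\nabla\phi(z)\cdot u_s(z)=|\nabla\phi(z)|\,n(z)\cdot u_s(z)>0$, forcing $\Phi_{s,r}(z)\notin\overline{\sX}$ for all small $r-s>0$ --- a contradiction. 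Compactness of $\overline{\Gamma_\mathrm{out}}$ upgrades this to a genuine neighbourhood, whence $\lim_{x\to\Gamma_\mathrm{out}}\normi{(U^{s,t}f)(x,\cdot)}=0$ and $U^{s,t}f\in D$. Finally, combining the two norm estimates with $1\le e^{(t-s)\opnorm{\nabla u}}$ gives $\normooi{U^{s,t}f}=\normoi{U^{s,t}f}+\normoid{\nabla(U^{s,t}f)}\le e^{(t-s)\opnorm{\nabla u}}\,\II(t-s\le t_0)\,\normooi f$.

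The step I expect to be the real obstacle is not any single estimate but the bookkeeping forced by the finite domain with outflow: one has to establish both that a trajectory which has left $\sX$ never comes back (so that $U^{s,t}$ genuinely collapses to $0$ once $t-s>t_0$, and so that the ``already exited'' region has a manageable structure) and that the outflow boundary condition is inherited, the latter resting on the transversality along $\Gamma_\mathrm{out}$ and the streamline geometry of \S\ref{s:flowass}; these are precisely the features lying outside the time-dependent semigroup theory of \citep{Kolok10b}. By contrast the exponential constant is a routine Gr\"onwall argument and the $B$-bound is immediate.
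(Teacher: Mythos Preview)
Your argument is correct and follows the same line as the paper's two-sentence proof: the $B$-bound is immediate from the definition of $U^{s,t}$, and the $D$-bound comes from the chain rule together with the Gr\"onwall estimate $\norm{\nabla\Phi_{s,t}(x)}_{\RR^d\to\RR^d}\le e^{(t-s)\opnorm{\nabla u}}$ of Proposition~\ref{p:dxPhi2}. You supply considerably more detail than the paper on two points it leaves implicit---the no-re-entry argument that makes the indicator $\II(t-s\le t_0)$ exact rather than just an upper bound, and the verification that $U^{s,t}f$ inherits the outflow condition in Definition~\ref{d:D}---and your identification of these boundary-bookkeeping issues as the genuine content of the proposition is apt.
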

\begin{proof}
The \Lpb-norm of $f$ is immediate.  Use the chain rule and Proposition~\ref{p:dxPhi2} in the appendix for the derivative
of $f$.
\end{proof}

The required propagator is now constructed as a perturbation of the transport propagator $U$
by the bounded coagulation generator:
\begin{defn}\label{d:Asprop}
Let $T>0$, $0\leq r \leq t \leq T$ and $\mu\in L^\infty\left([0,T),\Mi\right)$, and define
(compare \citep[Theorem 2.9]{Kolok10b}):
\begin{multline*}
A^{r,t}[\mu] := 
U^{r,t}+
\sum_{m=1}^\infty \int_{r\leq s_1 \leq \dotsc \leq s_m\leq t}
 U^{r,s_1}H_{s_1}[\mu] \dotsm U^{s_{m-1},s_m} H_{s_m}[\mu] U^{s_m,t}
 \dd s_1 \dotsm \dd s_m.
\end{multline*}
\end{defn}

It is now necessary to establish estimates for the operator norm of $A$ on \Lpb and \Wopb. For this it is
shown that the infinite sum just given is absolutely convergent in both operator norms.  During this analysis
it is convenient to use some additional notation:
\begin{defn}\label{d:Aterm}
Under the assumptions of Definition~\ref{d:Asprop} let $f\in \Lpb$ and $t\geq 0$; define both
$f^0_{r,t} := U^{r,t}f$ and
\begin{equation*}
f_{r,t}^m := \int_r^{t} U^{r,s}H_s[\mu]f_{s,t}^{m-1}\dd s.
\end{equation*}
\end{defn}
This allows one to write
\begin{equation}\label{eq:Aspropsum}
A^{r,t}[\mu]f = \sum_{m=0}^\infty f_{r,t}^m.
\end{equation}

\begin{prop}\label{p:zero}	
Under the assumptions of Definitions~\ref{d:Asprop}\&\ref{d:Aterm}
\begin{equation*}
\norm{f_{r,t}^m(x,\cdot)}_{\sY-\infty}
\leq
\frac{1}{m!}\left(\frac32 K_\infty \Cho \opnorm{\mu}_{B^\star}(t-r)\right)^m \norm{f\left(\Phi_{r,t}(x),\cdot\right)}_{\sY-\infty},
\end{equation*}
which is zero for $t-r\geq t_0$ and
\begin{equation*}
\normoi{f_{r,t}^m}
\leq
\frac{1}{m!}\left(\frac32 K_\infty \Cho \opnorm{\mu}_{B^\star}(t-r)\right)^m \normoi{f}.
\end{equation*}
\end{prop}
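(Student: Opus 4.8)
The plan is to prove both bounds in \propref{p:zero} by induction on $m$, with the $\sY-\infty$ pointwise bound doing the real work and the $B$-norm bound following by taking the supremum over $x$. For $m=0$ both statements are just the definition of $f^0_{r,t}=U^{r,t}f$ together with \propref{p:Ubound}: indeed $\norm{f^0_{r,t}(x,\cdot)}_{\sY-\infty}$ is either $\norm{f(\Phi_{r,t}(x),\cdot)}_{\sY-\infty}$ or $0$, hence bounded by the claimed expression (which at $m=0$ is exactly $\norm{f(\Phi_{r,t}(x),\cdot)}_{\sY-\infty}$), and it vanishes once $t-r\geq t_0$ because then $\Phi_{r,t}(x)\notin\sX$ by the residence-time assumption.

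For the inductive step, suppose the pointwise bound holds for $m-1$. Starting from the recursion $f^m_{r,t}=\int_r^t U^{r,s}H_s[\mu]f^{m-1}_{s,t}\,\dd s$, I would first use that $U^{r,s}$ is either evaluation at $\Phi_{r,s}(x)$ or zero, so
\begin{equation*}
\norm{f^m_{r,t}(x,\cdot)}_{\sY-\infty}
\leq \int_r^t \norm{\bigl(H_s[\mu]f^{m-1}_{s,t}\bigr)(\Phi_{r,s}(x),\cdot)}_{\sY-\infty}\,\dd s,
\end{equation*}
where the integrand is understood to be zero when $\Phi_{r,s}(x)\notin\sX$. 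Next, from \defref{d:Htstar} and H1, for any fixed $x'\in\sX$ one has the pointwise-in-$y$ estimate $\norm{(H_s[\mu]g)(x',\cdot)}_{\sY-\infty}\leq \tfrac32 K_\infty \Cho \opnorm{\mu}_{B^\star}\,\sup_{x''}\norm{g(x'',\cdot)}_{\sY-\infty}$; this is the pointwise refinement of \propref{p:Hbound} and uses that $K\leq K_\infty$, that $\normoi{h_{i,x}}\leq\Cho$, and that the total mass picked out by $\mu_s$ is controlled by $\norm{\mu_s}_{B^\star}$. Feeding in the inductive hypothesis at the time argument $s$ (note $f^{m-1}_{s,t}$ is controlled in terms of $\norm{f(\Phi_{s,t}(\cdot),\cdot)}_{\sY-\infty}\leq\norm{f(\Phi_{r,t}(x),\cdot)}_{\sY-\infty}$ after using the flow semigroup property $\Phi_{s,t}\circ\Phi_{r,s}=\Phi_{r,t}$ and that $\sup_{x''}$ of the $m-1$ bound is again $\norm{f(\Phi_{r,t}(x),\cdot)}_{\sY-\infty}$ times the power factor), one obtains
\begin{equation*}
\norm{f^m_{r,t}(x,\cdot)}_{\sY-\infty}
\leq \Bigl(\tfrac32 K_\infty \Cho \opnorm{\mu}_{B^\star}\Bigr)^m \norm{f(\Phi_{r,t}(x),\cdot)}_{\sY-\infty}\int_r^t \frac{(t-s)^{m-1}}{(m-1)!}\,\dd s,
\end{equation*}
and the elementary integral evaluates to $(t-r)^m/m!$, completing the induction. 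Vanishing for $t-r\geq t_0$ is inherited because $\norm{f(\Phi_{r,t}(x),\cdot)}_{\sY-\infty}=0$ in that regime, or alternatively because the outermost $U^{r,s}$-composition forces a factor that vanishes. Finally, taking $\sup_x$ and using $\sup_x\norm{f(\Phi_{r,t}(x),\cdot)}_{\sY-\infty}\leq\normoi{f}$ yields the $B$-norm bound.

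The only delicate point — the step I would expect to demand the most care — is bookkeeping the time arguments and flow maps so that the $m-1$ hypothesis, which is stated with base time $s$ and ``initial'' point $\Phi_{s,t}(x'')$, is correctly composed with the outer propagator $U^{r,s}$ evaluated at $x$: one must check that $\sup_{x''}\norm{f(\Phi_{s,t}(x''),\cdot)}_{\sY-\infty}$ is bounded by $\norm{f(\Phi_{r,t}(x),\cdot)}_{\sY-\infty}$ only after the supremum over $x''$ is actually realised along the relevant restricted set, so in fact it is cleanest to bound $\sup_{x''}\norm{f(\Phi_{s,t}(x''),\cdot)}_{\sY-\infty}$ directly by $\normoi{f}$ at the pointwise stage and then recover the sharper $\norm{f(\Phi_{r,t}(x),\cdot)}_{\sY-\infty}$ factor by observing that $H_s[\mu]$ acting at the point $\Phi_{r,s}(x)$ only ever evaluates $f$ at first-coordinate $\Phi_{r,s}(x)$ again (the coagulation operator in \defref{d:Htstar} does not move the $x$ variable), so composing with $U^{s_m,t}$ at the innermost level pins the first coordinate to $\Phi_{r,t}(x)$. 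Everything else is the standard Dyson-series / Gronwall-type estimate and is routine once this indexing is pinned down.
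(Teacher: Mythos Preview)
Your approach is correct and is exactly the induction the paper has in mind. The key structural fact you eventually identify---that $H_s[\mu]$ in \defref{d:Htstar} is local in $x$, so that $\norm{(H_s[\mu]g)(x',\cdot)}_{\sY-\infty}\leq\tfrac32 K_\infty\Cho\opnorm{\mu}_{B^\star}\norm{g(x',\cdot)}_{\sY-\infty}$ without any supremum over a second spatial variable---is precisely what makes the pointwise bound go through, and combined with the semigroup identity $\Phi_{s,t}\circ\Phi_{r,s}=\Phi_{r,t}$ the induction closes cleanly.

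One expository remark: your middle paragraph first states the weaker estimate with $\sup_{x''}$ and an accompanying justification that does not quite work as written (the supremum over all $x''$ cannot be bounded by the single value $\norm{f(\Phi_{r,t}(x),\cdot)}_{\sY-\infty}$), and only in the final paragraph do you supply the correct local-in-$x$ bound. It would be cleaner to lead with the locality of $H_s[\mu]$; then no detour through $\sup_{x''}$ is needed and the inductive step is a two-line computation.
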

\begin{proof}
Proceed by induction.
\end{proof}
The \Lpb-operator norm estimate now follows:
\begin{prop}\label{p:ApropB}
Let $T>0$, $0\leq r \leq t < T$ and $\mu\in L^\infty\left([0,T),\left(\Mi,\norm{\cdot}_{B^\star}\right)\right)$
and assume H1 holds, then $A^{r,t}[\mu]$ is a locally bounded propagator on
\Lpb satisfying 
\begin{equation*}
\norm{A^{r,t}[\mu]}_{B\rightarrow B}
   \leq e^{\frac32 K_\infty \Cho \opnorm{\mu}_{B^\star} (t-r)}\II(t-r \leq t_0)
\end{equation*}
and $A^{r,t}[\mu] f$ is $\normoi{}$-continuous in $t$ for every $f\in\Lpb$ and $t\geq r$ (this
is known as `strong continuity').
Further, for any $f \in D$ and almost all $t$
\begin{equation*}
\deriv{t}A^{r,t}[\mu]f=A^{r,t}[\mu]A_t[\mu]f,
\end{equation*}
where one recalls $A_t[\mu] = u_t \cdot \nabla + H_t[\mu]$.
\end{prop}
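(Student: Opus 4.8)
The plan is to read every assertion off the term-wise estimates of \propref{p:zero}, the operator-norm bound of \propref{p:Hbound}, and a direct analysis of the transport propagator near $\Gamma_\mathrm{out}$. I would begin with the norm bound: summing the second inequality of \propref{p:zero} over $m$ and invoking the representation \eqref{eq:Aspropsum} gives $\normoi{A^{r,t}[\mu]f}\le e^{\frac32 K_\infty\Cho\opnorm{\mu}_{B^\star}(t-r)}\normoi{f}$, while the first ($\sY$-$\infty$) inequality of \propref{p:zero} shows that each $f^m_{r,t}$, and hence $A^{r,t}[\mu]f$, vanishes once $t-r\ge t_0$, because then $\Phi_{r,t}(x)\notin\sX$ for $x\in\sX$ by the residence-time assumption; this supplies the indicator and the local boundedness. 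The majorising exponential series is absolutely convergent, locally uniformly in $(r,t)$, and it is this one majorant that will afterwards license every exchange of $\sum_m$ with a limit, an integral, or a $t$-derivative.

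Next I would establish the composition identity $A^{r,s}[\mu]A^{s,t}[\mu]=A^{r,t}[\mu]$ for $r\le s\le t$ (with $A^{t,t}[\mu]=\mathrm{id}$, immediate from \defref{d:Asprop}). Inserting \defref{d:Asprop} and splitting each time-ordered simplex $\{r\le s_1\le\dots\le s_m\le t\}$ according to how many of the $s_i$ fall in $[r,s]$, one factors the operator product at the break point using $U^{s_k,s}U^{s,s_{k+1}}=U^{s_k,s_{k+1}}$ --- valid because the hypotheses of \S\ref{s:flowass} forbid a streamline to re-enter \sX --- and uses Fubini (legitimate by absolute convergence) to recognise the result as the Cauchy product of the series defining $A^{r,s}[\mu]$ and that defining $A^{s,t}[\mu]$. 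For strong continuity the locally uniform majorant reduces the task to $\normoi{}$-continuity of $t\mapsto f^m_{r,t}$ for every $m$: for $m\ge1$ this follows by induction from $f^m_{r,t}=\int_r^t U^{r,s}H_s[\mu]f^{m-1}_{s,t}\,\dd s$, since the contribution of the moving upper limit is bounded in $\normoi{}$ by \propref{p:Hbound} and \propref{p:zero} and the $t$-dependence inside $f^{m-1}_{s,t}$ is controlled by the inductive hypothesis and dominated convergence; for $m=0$ one must show $t\mapsto U^{r,t}f$ is $\normoi{}$-continuous, and this is exactly where the $C^2$-regularity of the flow and the outflow structure of \S\ref{s:flowass} are needed.

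For the generator relation I would differentiate $A^{r,t}[\mu]f=\sum_m f^m_{r,t}$ term by term. The base computation is $\deriv{t}U^{r,t}f=U^{r,t}(u_t\cdot\nabla f)=U^{r,t}U_tf$ for $f\in\Wopb$, from the chain rule and $\deriv{t}\Phi_{r,t}(x)=u_t(\Phi_{r,t}(x))$, with the spatial-derivative bound of \propref{p:dxPhi2} keeping everything in $B$. Applying the Leibniz rule to the recursion for $f^m_{r,t}$, and using $f^{m-1}_{t,t}=U^{t,t}f=f$ when $m=1$ and $f^{m-1}_{t,t}=0$ when $m\ge2$, writes each $\deriv{t}f^m_{r,t}$ as a finite sum of integrals of precisely the shape of the $f^{m-1}_{r,t}$ with one extra $U_t$ or $H_t[\mu]$ inserted; these are again majorised, locally uniformly, by \propref{p:Hbound} and \propref{p:zero}, so $\deriv{t}\sum_m f^m_{r,t}=\sum_m\deriv{t}f^m_{r,t}$, and a resummation of the resulting terms produces $A^{r,t}[\mu](U_t+H_t[\mu])f=A^{r,t}[\mu]A_t[\mu]f$. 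The phrase ``almost all $t$'' then absorbs the null set of times at which $H_t[\mu]$ --- defined only a.e.\ since $\mu$ is merely $L^\infty$ in time --- is undefined, together with the instants at which a streamline reaches $\Gamma_\mathrm{out}$.

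The step I expect to be the main obstacle is the behaviour of the transport propagator at the outflow boundary. Because $U^{r,t}$ destroys mass at $\Gamma_\mathrm{out}$ rather than preserving it, neither the strong continuity nor the term-wise differentiability of $f^0_{r,t}$ is automatic, and it is precisely the boundary condition $\lim_{x\to\Gamma_\mathrm{out}}\norm{f(x,\cdot)}_{\sY-\infty}=0$ built into \Wopb, combined with the ``exactly one streamline'' and finite-residence-time hypotheses of \S\ref{s:flowass} (so that what leaves \sX is already vanishingly small where it leaves, and nothing re-enters), that makes the boundary crossings harmless. Everything else is routine bookkeeping on the Dyson series, controlled by \propref{p:zero} and \propref{p:Hbound}.
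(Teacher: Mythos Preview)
Your proposal is correct and follows essentially the same route as the paper's own proof, which is a two-sentence sketch: sum the estimates of \propref{p:zero} via \eqref{eq:Aspropsum} for the norm bound, and differentiate the Dyson series of \defref{d:Asprop} term by term for the generator identity. You have simply filled in the bookkeeping (the composition law, the term-wise continuity argument, and the care at $\Gamma_\mathrm{out}$) that the paper leaves implicit.
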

\begin{proof}
The first part of the result follows from Proposition~\ref{p:zero} and \eqref{eq:Aspropsum}.

The (left) generator $A_t[\mu]$ can be found differentiating the series in Proposition~\ref{d:Asprop} term
by term and observing that the resulting series is again absolutely convergent.
\end{proof}

Differentiating with respect to $r$ in Proposition~\ref{p:ApropB} is not possible, because $A^{r,t}$ does
not necessarily preserve \Wopb.  This is addressed in the next few propositions by making stronger
smoothness assumptions on $h$, the spatial delocalisation of the coagulation interaction introduced
in \S\ref{s:coag}  and used in the definition of $H$ (Definition~\ref{d:Htstar}).
\begin{prop}\label{p:Aderivterm}
Under the assumptions of Definitions~\ref{d:Asprop}\&\ref{d:Aterm} and additionally assuming
either H3 holds or H2 holds and $\mu$ is bounded for all (not just Lebesgue almost all $t$),
for example because it is continuous
\begin{multline*}
\norm{\nabla f_{r,t}^m(x,\cdot)}_{\sY-\infty}
\leq
\left(\frac32 K_\infty \Cho \opnorm{\mu}_{B^\star}\right)^m \frac{(t-r)^{m}}{m!} 
    \norm{\nabla\left(f\left(\Phi_{r,t}(x),\cdot\right)\right)}_{\sY-\infty} \\
+m\left(\frac32 K_\infty \Cho \opnorm{\mu}_{B^\star}\right)^m \frac{(t-r)^{m-1}}{(m-1)!} 
    \frac{\Chd t_0 e^{\opnorm{\nabla u}(t-r)}}{\Cho}\norm{f\left(\Phi_{r,t}(x),\cdot\right)}_{\sY-\infty}
\end{multline*}
and
\begin{multline*}
\norm{\nabla f_{r,t}^m}_{\Lpb^d}
\leq
\left(\frac32 K_\infty \Cho \opnorm{\mu}_{B^\star}\right)^m \frac{(t-r)^{m}}{m!} 
    e^{\opnorm{\nabla u}(t-r)}\norm{\nabla f}_{\Lpb^d}\II(t-r\geq t_0) \\
+m\left(\frac32 K_\infty \Cho \opnorm{\mu}_{B^\star}\right)^m \frac{(t-r)^{m-1}}{(m-1)!} 
    \frac{\Chd t_0 e^{\opnorm{\nabla u}(t-r)}}{\Cho}\norm{f}_{\Lpb}\II(t-r\geq t_0).
\end{multline*}
\end{prop}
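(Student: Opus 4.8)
The plan is to argue by induction on $m$ using the recursion $f^m_{r,t}=\int_r^t U^{r,s}H_s[\mu]f^{m-1}_{s,t}\,\dd s$ of Definition~\ref{d:Aterm}. I would strengthen the inductive hypothesis so that it carries, besides the two displayed bounds, the structural decomposition $\nabla_x f^m_{r,t}(x,y)=V^m_{r,t}(x,y)+\mathcal{R}^m_{r,t}(x,y)$, in which $V^m_{r,t}(x,\cdot)$ is obtained from $\nabla(f(\Phi_{r,t}(x),\cdot))$ by applying, over the $m$-fold time simplex $r\le s_1\le\cdots\le s_m\le t$, a composition of $m$ operators that act only in the type variable and each have norm at most $\frac32 K_\infty\Cho\opnorm{\mu}_{B^\star}$, so that $\normi{V^m_{r,t}(x,\cdot)}$ satisfies the first displayed estimate via the simplex volume $(t-r)^m/m!$, while $\mathcal{R}^m_{r,t}$ is the remainder satisfying the second estimate. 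First I would record the base case $m=0$, which is simply the chain rule: $\nabla f^0_{r,t}(x,y)=(\nabla\Phi_{r,t}(x))^{\mathsf{T}}(\nabla f)(\Phi_{r,t}(x),y)$ where $\Phi_{r,t}(x)\in\sX$, i.e.\ $V^0_{r,t}=\nabla(f(\Phi_{r,t}(x),\cdot))$ and $\mathcal{R}^0\equiv0$, so the bounds hold with equality. For $m\ge1$ I would differentiate the recursion under the $\dd s$-integral; this is legitimate under H3 because $h$, hence the whole integrand, is then $C^1$ in $x$, and under H2 because the $\dd s$-integration regularises the measure-valued weak spatial derivative of $h$ along streamlines --- which is exactly why $\mu$ must be bounded for every $t$, not merely almost every $t$, the atomic part of $\nabla h$ selecting individual instants along a trajectory.

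The engine of the step is the identity $\nabla_z(H_s[\mu]g)=H_s[\mu](\nabla_z g)+\mathcal{N}_s[g]$, where $\mathcal{N}_s[g]$ collects the terms in which the derivative lands on $h$ rather than on $g$; with $g=f^{m-1}_{s,t}$ it splits $\nabla_x f^m_{r,t}$ into $\int_r^t(\nabla\Phi_{r,s}(x))^{\mathsf{T}}H_s[\mu](\nabla_z f^{m-1}_{s,t})(\Phi_{r,s}(x),\cdot)\,\dd s$ and $\int_r^t(\nabla\Phi_{r,s}(x))^{\mathsf{T}}\mathcal{N}_s[f^{m-1}_{s,t}](\Phi_{r,s}(x),\cdot)\,\dd s$. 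In the first integral the decisive point is that $H_s[\mu]$ acts only in the type variable, with its $h$-coefficients frozen at a single space point, so the Jacobian $(\nabla\Phi_{r,s}(x))^{\mathsf{T}}$ commutes past it; feeding in the inductive decomposition of $\nabla_z f^{m-1}_{s,t}$ and using the cocycle identities $\nabla\Phi_{r,t}(x)=\nabla\Phi_{s,t}(\Phi_{r,s}(x))\,\nabla\Phi_{r,s}(x)$ and $\Phi_{s,t}\circ\Phi_{r,s}=\Phi_{r,t}$, the Jacobian factors generated at the successive levels telescope onto the single factor $(\nabla\Phi_{r,t}(x))^{\mathsf{T}}$ in front of $(\nabla f)(\Phi_{r,t}(x),\cdot)$, i.e.\ onto $\nabla(f(\Phi_{r,t}(x),\cdot))$. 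Thus the $V^{m-1}$-part becomes $V^m_{r,t}$, contributing the first displayed term with no spurious exponential, and the $\mathcal{R}^{m-1}$-part contributes, after $\norm{\nabla\Phi_{r,s}(x)}_{\RR^d\to\RR^d}\le e^{\opnorm{\nabla u}(s-r)}$ and $e^{\opnorm{\nabla u}(s-r)}e^{\opnorm{\nabla u}(t-s)}=e^{\opnorm{\nabla u}(t-r)}$, exactly $m-1$ copies of the second displayed term.

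The second integral supplies the remaining copy of the remainder. Using $\Phi_{s,t}(\Phi_{r,s}(x))=\Phi_{r,t}(x)$ one factors out $\normi{f(\Phi_{r,t}(x),\cdot)}$ and bounds $\normi{f^{m-1}_{s,t}(\Phi_{r,s}(x),\cdot)}$ by Proposition~\ref{p:zero}, leaving a single $\dd s$-integral of a $\nabla h$-term tested against $\mu_s$. Under H3 this is immediate from $\normi{\pderiv{\xi}h_{i,x}(\xi)}\le\Chd$, which turns one factor $\Cho$ into $\Chd$ and leaves $e^{\opnorm{\nabla u}(t-r)}$ after bounding the transport; since $t-r\le t_0$ wherever $f^m_{r,t}\ne0$, the power $(t-r)^m/m!$ so produced is absorbed into $t_0(t-r)^{m-1}/(m-1)!$. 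Under H2 one substitutes $\nabla_z h(z,x_2)=\sum_j\nabla\chi_{j,1}(z)\chi_{j,2}(x_2)$, and, for the companion term, $\nabla_z h(x_2,z)=\sum_j\chi_{j,1}(x_2)\nabla\chi_{j,2}(z)$, and observes that, once the transport Jacobian is absorbed, the $\dd s$-integral is precisely the quantity that H2 bounds by $\Chd t_0 e^{\opnorm{\nabla u}\min(t-r,t_0)}$ together with its symmetric counterpart --- the exponential on that right-hand side being just what the Jacobian costs. Dividing by $\Cho$ to match the normalisation yields the factor $\Chd t_0 e^{\opnorm{\nabla u}(t-r)}/\Cho$, and adding the $m-1$ copies from the first integral gives the prefactor $m$. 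Summing the two integrals gives the pointwise estimate; the $\normoid{\cdot}$-estimate then follows by taking the supremum over $x$, using $\normi{\nabla(f(\Phi_{r,t}(x),\cdot))}\le\sup_x\norm{\nabla\Phi_{r,t}(x)}_{\RR^d\to\RR^d}\normoid{\nabla f}\le e^{\opnorm{\nabla u}(t-r)}\normoid{\nabla f}$ (Proposition~\ref{p:dxPhi2}) in the first term, $\normi{f(\Phi_{r,t}(x),\cdot)}\le\normoi{f}$ in the second, and the vanishing of $f^m_{r,t}$ once $t-r$ exceeds the residence time $t_0$ (Proposition~\ref{p:zero}) to supply the indicator.

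The hard part will be, first, keeping the flow Jacobians in symbolic form long enough for them to telescope onto the leading term, so that no extraneous $e^{\opnorm{\nabla u}(t-r)}$ appears there --- this is what forces the induction to be carried on the matrix-valued derivatives rather than on their norms; and, second, in the H2 case, making rigorous the differentiation under the $\dd s$-integral when $h$, and hence the whole integrand, is only of bounded variation in space, so that the outcome is again an $L^\infty$ object and not a measure. It is precisely to absorb the atoms of the weak derivative of $h$ along streamlines that $\mu$ must be bounded for every $t$ and that H2 is postulated in the integrated form used above.
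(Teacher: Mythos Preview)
Your proposal is correct and follows essentially the same approach as the paper: induction on $m$ using Proposition~\ref{p:zero} for the undifferentiated factors, followed by Proposition~\ref{p:dxPhi2} to pass from the pointwise estimate to the $\Lpb^d$-norm estimate. The paper's proof is only two sentences, so your explicit account of the Jacobian telescoping (keeping the matrices symbolic so that the cocycle identity collapses them onto a single $\nabla\Phi_{r,t}(x)$, which is what prevents an extraneous $e^{\opnorm{\nabla u}(t-r)}$ in the leading term) and of why H2 forces $\mu$ to be bounded at \emph{every} $t$ is precisely the content hidden behind the paper's ``proceed by induction''.
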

\begin{proof}
The first inequality is established by induction making use of Proposition~\ref{p:zero} for the terms in $f$.
The second inequality introduces Proposition~\ref{p:dxPhi2} to get an estimate for $\nabla \Phi_{r,t}(x)$
\end{proof}

\begin{prop}\label{p:ApropD}
Let $T>0$, $0\leq r \leq t < T$, $\mu\in L^\infty\left([0,T),\left(\Mi,\norm{\cdot}_{B^\star}\right)\right)$
and either H3 hold or H2 hold but with $\mu$ bounded for all (not just Lebesgue almost all) times, then $A^{r,t}[\mu]$ is a propagator on \Wopb
and there is a $\CAW \in \RR$ such that
\begin{equation*}
\norm{A^{r,t}[\mu]}_{\Wopb\rightarrow \Wopb}
   \leq e^{\left(\opnorm{\nabla u}+ \frac32 K_\infty \Cho \opnorm{\mu}_{B^\star}\right) (t-r)}
     \CAW \II(t-r \leq t_0).
\end{equation*}
Further, for any $f \in D$ and almost all $t$
\begin{equation*}
\deriv{t}A^{r,t}[\mu]f=A^{r,t}[\mu]A_t[\mu]f,
\quad
\deriv{r}A^{r,t}[\mu]f=-A_r[\mu]A^{r,t}[\mu]f.
\end{equation*}
\end{prop}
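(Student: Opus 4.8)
The plan is to reduce everything to the term-by-term estimates of Propositions~\ref{p:zero} and~\ref{p:Aderivterm}, reading the operator-norm bound off the Dyson series of Definition~\ref{d:Asprop} and the evolution equations off that same series together with the \Lpb-statement of Proposition~\ref{p:ApropB}. Write $A^{r,t}[\mu]f=\sum_{m\ge0}f^m_{r,t}$ as in \eqref{eq:Aspropsum} and put $a:=\tfrac32K_\infty\Cho\opnorm{\mu}_{B^\star}$. Proposition~\ref{p:zero} already gives $\sum_m\normoi{f^m_{r,t}}\le e^{a(t-r)}\normoi{f}$, and summing the gradient estimates of Proposition~\ref{p:Aderivterm} gives
\[
\sum_{m\ge0}\normoid{\nabla f^m_{r,t}}
  \le e^{(\opnorm{\nabla u}+a)(t-r)}
     \left(\normoid{\nabla f}+\frac{\Chd t_0}{\Cho}\,a\,\bigl(1+a(t-r)\bigr)\,\normoi{f}\right),
\]
both series converging absolutely. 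Because the residence-time assumption confines $t-r$ to $[0,t_0]$ (and supplies the indicator $\II(t-r\le t_0)$), the prefactor $1+a(t-r)$ is at most $1+at_0$ and can be absorbed into a constant \CAW\ depending on $K_\infty,\Cho,\Chd,t_0$ and $\opnorm{\mu}_{B^\star}$, whence $\normooi{A^{r,t}[\mu]f}\le\CAW\,e^{(\opnorm{\nabla u}+a)(t-r)}\,\normooi{f}$. In particular every $f^m_{r,t}$ lies in \Wopb, the series converges in $\normooi{\cdot}$, $\nabla A^{r,t}[\mu]f=\sum_m\nabla f^m_{r,t}$, and $A^{r,t}[\mu]$ maps \Wopb into \Wopb.

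With this in hand the remaining propagator properties come cheaply. The composition law $A^{r,s}[\mu]A^{s,t}[\mu]=A^{r,t}[\mu]$ is inherited from \Lpb, where Proposition~\ref{p:ApropB} established it, since $\Wopb\subset\Lpb$ and each factor now maps \Wopb into \Wopb. For strong continuity of $t\mapsto A^{r,t}[\mu]f$ in $\normooi{\cdot}$ I would sum the recursion of Definition~\ref{d:Aterm} into the Duhamel identity
\[
A^{r,t}[\mu]f=U^{r,t}f+\int_r^tU^{r,s}H_s[\mu]\,A^{s,t}[\mu]f\,\dd s
\]
and combine the $\normooi{\cdot}$-continuity of $U^{s,t}f$ for $f\in\Wopb$ --- here the outflow boundary condition $\lim_{x\to\Gamma_\mathrm{out}}\norm{f(x,\cdot)}_{\sY-\infty}=0$ built into \Wopb is exactly what removes the discontinuity that would otherwise appear as a streamline leaves \sX --- with dominated convergence in the series, using the bounds of the first step.

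The forward equation $\deriv{t}A^{r,t}[\mu]f=A^{r,t}[\mu]A_t[\mu]f$ is already contained in Proposition~\ref{p:ApropB}; the novelty is that, $A^{r,t}[\mu]f$ now lying in \Wopb, the generator $A_r[\mu]=U_r+H_r[\mu]$ may be applied to it, so the Duhamel identity can be differentiated in $r$. The free term gives $\deriv{r}U^{r,t}f=-U_rU^{r,t}f$, the flow identity $\deriv{r}U^{r,s}=-U_rU^{r,s}$ being available because $U^{r,t}f\in\Wopb$ by Proposition~\ref{p:Ubound}; differentiating $\int_r^tU^{r,s}H_s[\mu]A^{s,t}[\mu]f\,\dd s$ in $r$ produces, via the Leibniz rule, the lower-limit term $-U^{r,r}H_r[\mu]A^{r,t}[\mu]f=-H_r[\mu]A^{r,t}[\mu]f$ and, from the $r$ in the index of $U^{r,s}$, the term $-U_r\bigl(A^{r,t}[\mu]f-U^{r,t}f\bigr)$. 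Adding the three contributions,
\[
\deriv{r}A^{r,t}[\mu]f=-U_rA^{r,t}[\mu]f-H_r[\mu]A^{r,t}[\mu]f=-A_r[\mu]A^{r,t}[\mu]f .
\]

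The main obstacle, I expect, is making this $r$-differentiation rigorous under H2 (which is also why that hypothesis asks $\mu$ to be bounded for every $t$ rather than almost every $t$; under H3 the delocalisation is spatially differentiable, $H_s[\mu]$ preserves \Wopb, and the term-by-term differentiation is routine). Under H2 the $h_{i,x}$ are only of special bounded variation in space, $H_s[\mu]$ does not preserve \Wopb, and one cannot differentiate the individual transport factors inside a single term of the Dyson series. Instead one differentiates the time-integrated terms $f^m_{r,t}=\int_r^tU^{r,s}H_s[\mu]f^{m-1}_{s,t}\,\dd s$ directly --- these do lie in \Wopb by Proposition~\ref{p:Aderivterm}, whose proof is precisely where the co-area-type structure of H2 (the bound on $\int_r^t\abs{\nabla\chi_{j,1}\left(\Phi_{r,s}(x),x_2\right)}\,\dd s$) is used to absorb the atoms in the $x$-derivative --- exploiting the semigroup relation $U^{r,s}=U^{r,r+\varepsilon}U^{r+\varepsilon,s}$ to write
\[
f^m_{r,t}-f^m_{r+\varepsilon,t}=\int_r^{r+\varepsilon}U^{r,s}H_s[\mu]f^{m-1}_{s,t}\,\dd s+\bigl(U^{r,r+\varepsilon}-I\bigr)f^m_{r+\varepsilon,t}.
\]
Dividing by $\varepsilon$ and letting $\varepsilon\downarrow0$ gives $\deriv{r}f^m_{r,t}=-H_r[\mu]f^{m-1}_{r,t}-U_rf^m_{r,t}$, and summing over $m$ --- legitimate by the uniform bounds of the first step --- reproduces the backward equation. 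It is this interchange of $\deriv{r}$ with the intermediate-time integrals and with the infinite sum, rather than any individual estimate, that will demand the most care.
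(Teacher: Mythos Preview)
Your proposal is correct and follows essentially the same approach as the paper: you read off the $\Wopb$-operator norm by summing the termwise gradient estimates of Proposition~\ref{p:Aderivterm} (your display matches the paper's displayed bound exactly, and your absorption of $1+a(t-r)\le 1+at_0$ into \CAW\ is precisely what the paper's ``there is a $\CAW$'' means, including the dependence of \CAW\ on $\opnorm{\mu}_{B^\star}$), and you obtain the two evolution equations by term-by-term differentiation of the Dyson series. The paper's own proof is a two-line sketch that does exactly this; your additional material --- the Duhamel rewriting for the $r$-derivative under H3, the explicit difference-quotient argument $f^m_{r,t}-f^m_{r+\varepsilon,t}=\int_r^{r+\varepsilon}U^{r,s}H_s[\mu]f^{m-1}_{s,t}\,\dd s+(U^{r,r+\varepsilon}-I)f^m_{r+\varepsilon,t}$ for the H2 case where $H_s[\mu]$ fails to preserve \Wopb, and the remarks on strong continuity --- goes beyond what the paper writes out but is all in the spirit of its ``differentiation in $r$ and $t$ is performed term by term''.
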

\begin{proof}
From Proposition~\ref{p:Aderivterm} one sees that, for $f\in\Wopb$
\begin{multline}
\norm{\nabla\left(A^{r,t}[\mu]f\right)}_{\Lpb^d}
\leq
e^{\left(\frac32 K_\infty \Cho \opnorm{\mu}_{B^\star}+\opnorm{\nabla u}\right) (t-r)}
 \II(t-r \leq t_0)\times\\
\left( \norm{\nabla f}_{\Lpb^d}+
  \frac{3K_\infty \Chd t_0\opnorm{\mu}_{\Lpb^\star}}{2}
  \left(1+\frac32 K_\infty \Cho \opnorm{\mu}_{B^\star} (t-r)\right) \norm{ f}_{\Lpb}\right).
\end{multline}
For the $f$ part of the \Wopb-norm use Proposition~\ref{p:ApropB}, the first statement of that
proposition also established the boundary condition for \Wopb.  Differentiation in $r$ and $t$
is performed term by term in the infinite sum from Definition~\ref{d:Asprop}.
\end{proof}

These results concerning the dual propagators are concluded by showing
Lipschitz continuity in the measure valued path parameter.
\begin{prop}\label{p:lipsch}
Let $T >0$, suppose $\mu,\nu \in L^\infty\left([0,T),\left(\Mi,\norm{\cdot}_{B^\star}\right)\right)$,
$0\leq s\leq t < T$ and H1 holds, then
\begin{multline*}
\norm{A^{s,t}[\mu]-A^{s,t}[\nu]}_{\Lpb\rightarrow\Lpb}\\
\leq
\frac32 K_\infty \Cho e^{3 K_\infty \Cho \max(\opnorm{\mu}_{B^\star},\opnorm{\nu}_{B^\star})(t-s)}
  \II(t-s\leq t_0) \esssup_{r\in[s,t]} \norm{\mu_r - \nu_r}_{B^\star}\dd r.
\end{multline*}
and
\begin{multline*}
\norm{A^{s,t}[\mu]-A^{s,t}[\nu]}_{\Lpb\rightarrow\Lpb}\\
\leq
\frac32 K_\infty \Cho (t-s) e^{\frac32 K_\infty \Cho \max(\opnorm{\mu}_{B^\star},\opnorm{\nu}_{B^\star})(t-s)}
  \II(t-s\leq t_0) \int_s^t \norm{\mu_r - \nu_r}_{B^\star}\dd r.
\end{multline*}
\end{prop}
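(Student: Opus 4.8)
The plan is to reduce the estimate to the Duhamel-type identity
\[
A^{s,t}[\mu]-A^{s,t}[\nu] \;=\; \int_s^t A^{s,r}[\mu]\,\bigl(H_r[\mu]-H_r[\nu]\bigr)\,A^{r,t}[\nu]\,\dd r
\]
as bounded operators on $B$, and then to read off both bounds from it using norm estimates already established. Put $\beta:=\frac32 K_\infty\Cho$ and $M:=\max(\opnorm{\mu}_{B^\star},\opnorm{\nu}_{B^\star})$. The one new input beyond Proposition~\ref{p:ApropB} is that $H_s[\cdot]$ is linear in its measure argument, so $H_r[\mu]-H_r[\nu]=H_r[\mu-\nu]$, and the computation behind Proposition~\ref{p:Hbound} (which uses only H1) gives $\norm{H_r[\mu]-H_r[\nu]}_{B\rightarrow B}\le\beta\,\norm{\mu_r-\nu_r}_{B^\star}$.

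Under H1 alone $A^{r,t}$ need not preserve \Wopb, hence is not differentiable in $r$ (cf.\ the discussion following Proposition~\ref{p:ApropB} and the role of H2/H3 in Proposition~\ref{p:ApropD}), so the identity cannot be obtained by differentiating $r\mapsto A^{s,r}[\mu]A^{r,t}[\nu]$; instead I would derive it from the series of Definition~\ref{d:Asprop}. Peeling the first $U^{s,r}H_r[\mu]$ factor off the absolutely convergent series (Proposition~\ref{p:zero}) gives the variation-of-constants identity $A^{s,t}[\mu]f=U^{s,t}f+\int_s^t U^{s,r}H_r[\mu]A^{r,t}[\mu]f\,\dd r$ for every $f\in B$, and likewise with $\nu$; subtracting and writing $\Delta^{s,t}:=A^{s,t}[\mu]-A^{s,t}[\nu]$ yields the linear Volterra equation $\Delta^{s,t}f=\int_s^t U^{s,r}H_r[\mu-\nu]A^{r,t}[\nu]f\,\dd r+\int_s^t U^{s,r}H_r[\mu]\Delta^{r,t}f\,\dd r$. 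Solving this Volterra equation by Neumann iteration produces a series whose $n$-th term is an $(n+1)$-fold time-ordered integral containing exactly one $H[\mu-\nu]$ factor; summing over $n\ge0$ and applying Fubini reassembles, for each placement time $r$ of that factor, the string of $U$'s and $H[\mu]$'s to its left into $A^{s,r}[\mu]$ and the string of $U$'s and $H[\nu]$'s to its right into $A^{r,t}[\nu]$, which is the displayed integral. All interchanges of summation and integration are justified by the absolute convergence in Proposition~\ref{p:zero}.

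With the identity in hand, sub-multiplicativity gives
\[
\norm{A^{s,t}[\mu]-A^{s,t}[\nu]}_{B\rightarrow B}\;\le\;\beta\int_s^t \norm{A^{s,r}[\mu]}_{B\rightarrow B}\,\norm{\mu_r-\nu_r}_{B^\star}\,\norm{A^{r,t}[\nu]}_{B\rightarrow B}\,\dd r,
\]
and Proposition~\ref{p:ApropB} bounds $\norm{A^{s,r}[\mu]}_{B\rightarrow B}\le e^{\beta\opnorm{\mu}_{B^\star}(r-s)}$ and $\norm{A^{r,t}[\nu]}_{B\rightarrow B}\le e^{\beta\opnorm{\nu}_{B^\star}(t-r)}$. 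Bounding the product crudely by the constant $e^{2\beta M(t-s)}$ and taking it out of the integral gives the first bound, with exponent $2\beta M=3K_\infty\Cho M$ and the factor $\int_s^t\norm{\mu_r-\nu_r}_{B^\star}\,\dd r$; bounding it instead by $e^{\beta\opnorm{\mu}_{B^\star}(r-s)}e^{\beta\opnorm{\nu}_{B^\star}(t-r)}\le e^{\beta M(t-s)}$ (using $(r-s)+(t-r)=t-s$) gives the second, with exponent $\beta M=\frac32 K_\infty\Cho M$, after which one may if desired replace $\int_s^t\norm{\mu_r-\nu_r}_{B^\star}\,\dd r$ by $(t-s)\,\esssup_{r\in[s,t]}\norm{\mu_r-\nu_r}_{B^\star}$. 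The cut-off $\II(t-s\le t_0)$ is then automatic: by the pointwise estimate in Proposition~\ref{p:zero} every summand of $A^{s,t}[\mu]f$ and of $A^{s,t}[\nu]f$, hence also $\Delta^{s,t}f$, is dominated by a multiple of $\normi{f(\Phi_{s,t}(x),\cdot)}$, which vanishes once $t-s\ge t_0$ by the residence-time assumption, so the left-hand side is already zero there and multiplying the right-hand side by $\II(t-s\le t_0)$ only strengthens the inequality.

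The step I expect to be the main obstacle is the first, which is essentially bookkeeping: establishing the Duhamel identity under the weak hypothesis H1, that is, checking that the Neumann-series solution of the Volterra equation for $\Delta$ resums --- via the flow property and the recognition of the propagator series inside it --- to the single integral over $r$, with every interchange of summation and integration controlled by the absolute convergence of Proposition~\ref{p:zero}. Once the identity is available everything collapses to the two-line norm estimate above; the remaining point requiring attention is that the $t-s\ge t_0$ cut-off must be extracted at the pointwise (in $x$) level, exactly as in Proposition~\ref{p:zero}, since the individual operator-norm bounds do not themselves record it.
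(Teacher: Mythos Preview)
Your argument is correct and is essentially the paper's approach: the paper proves by induction that the difference of the $m$-th integrands is bounded by $\frac32 K_\infty\Cho\bigl(\frac32 K_\infty\Cho M\bigr)^{m-1}\sum_{j=1}^m\norm{\mu_{s_j}-\nu_{s_j}}_{B^\star}$, then integrates over the simplex and sums over $m$ --- which, once resummed, is precisely your Duhamel identity $A^{s,t}[\mu]-A^{s,t}[\nu]=\int_s^t A^{s,r}[\mu]\,H_r[\mu-\nu]\,A^{r,t}[\nu]\,\dd r$, so the two differ only in packaging. Your Volterra-iteration derivation of that identity under H1 alone is sound, as is your handling of the cut-off via Proposition~\ref{p:zero}; note that the $\esssup$ and $\int$ in the two displayed bounds of the statement are accidentally interchanged (compare Proposition~\ref{p:Adual}), and it is the corrected versions that your estimates deliver.
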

\begin{proof}
Write $M =  \max(\opnorm{\mu}_{B^\star},\opnorm{\nu}_{B^\star})$ and show by induction that
\begin{multline}
\norm{ U^{r,s_1}H_{s_1}[\mu] \dotsm U^{s_{m-1},s_m} H_{s_m}[\mu] U^{s_m,t}
          - U^{r,s_1}H_{s_1}[\nu] \dotsm U^{s_{m-1},s_m} H_{s_m}[\nu] U^{s_m,t}}_{\Lpb\rightarrow\Lpb}\\
\leq
\frac32 K_\infty \Cho \left(\frac32 K_\infty \Cho M\right)^{m-1}
  \sum_{j=1}^m \norm{\mu_{s_j} - \nu_{s_j}}_{B^\star}.
\end{multline}
\end{proof}
This result exploits a small amount of additional problem structure to adapt the method
set out in the proof of Theorem 2.12 in \citep{Kolok10b}.  The key is that the parameterisation
only affects the coagulation ($H$) part of the propagator, which has a bounded generator,
while the transport ($U$) part of the propagator, which has an unbounded generator is
independent of the parameterisation by $\mu$ and $\nu$.

\section{Operators on the Space of Measures}
Under the duality pairing of \Lpb and \Mi given by $\left<f,\mu\right>=\int_{\sX\times\sY}f\mu(\dd x,\dd y)$
as used above, (dual) operators $\Lpb\rightarrow\Lpb$ define (pre-dual) operators $\Mi \rightarrow \Mi$
with the same operator norms.  
\begin{defn}\label{d:dualops}
Let $0\leq s\leq t < T$ and $\mu \in L^\infty\left([0,T),\left(\Mi,\norm{\cdot}_{B^\star}\right)\right)$.
For the pre-duals of $U^{s,t}$ and $A^{s,t}[\mu]$ write
$\widetilde{U}^{t,s}$ and $\widetilde{A}^{t,s}[\mu]$ respectively and note the reversal of the time
indices.  For the pre-dual of $H_t[\mu]$ write $\widetilde{H}_t[\mu]$.
\end{defn}
It is emphasised that  $A^{s,t}[\mu]$ acts on functions while $\widetilde{A}^{t,s}[\mu]$ acts on measures,
but both are parameterised by a measure-valued path $\mu$.

The existence of the dual operators and their norm estimates is immediate, see for example
\citep[Thrm 2.10]{Kolok10b}.  The duality relations yield:
\begin{prop}\label{p:Adual}
Let $0\leq s\leq t < T$, $\mu,\nu \in L^\infty\left([0,T),\Mi\right)$ and assume H1 holds, then
\begin{equation*}
\norm{ \widetilde{A}^{t,s}[\mu]}_{\Mi \rightarrow \Mi}
=
\norm{A^{s,t}[\mu]}_{B \rightarrow B}
\leq
e^{\frac32 K_\infty \Cho \opnorm{\mu}_{B^\star}(t-s)}\II\left(t-s \leq t_0\right),
\end{equation*}
along with
\begin{multline*}
\norm{\widetilde{A}^{t,s}[\mu]-\widetilde{A}^{t,s}[\nu]}_{\Mi\rightarrow\Mi}=
\norm{A^{s,t}[\mu]-A^{s,t}[\nu]}_{\Lpb\rightarrow\Lpb}\\
\leq
\frac32 K_\infty \Cho e^{3 K_\infty \Cho \max(\opnorm{\mu}_{B^\star},\opnorm{\nu}_{B^\star})(t-s)}
  \II(t-s\leq t_0) \int_{r\in[s,t]} \norm{\mu_r - \nu_r}_{B^\star}\dd r.
\end{multline*}
and
\begin{multline*}
\norm{\widetilde{A}^{t,s}[\mu]-\widetilde{A}^{t,s}[\nu]}_{\Mi\rightarrow\Mi}=
\norm{A^{s,t}[\mu]-A^{s,t}[\nu]}_{\Lpb\rightarrow\Lpb}\\
\leq
\frac32 K_\infty \Cho (t-s) e^{\frac32 K_\infty \Cho \max(\opnorm{\mu}_{B^\star},\opnorm{\nu}_{B^\star})(t-s)}
  \II(t-s\leq t_0) \esssup_{r\in[s,t]}  \norm{\mu_r - \nu_r}_{B^\star}.
\end{multline*}
\end{prop}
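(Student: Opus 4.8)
The plan is to derive all three assertions from one abstract fact about the pairing $\left<f,\mu\right>=\int_{\sX\times\sY}f(x,y)\mu(\dd x,\dd y)$, namely: if a bounded linear operator $L\colon\Lpb\to\Lpb$ admits a pre-dual $\widetilde L\colon\Mi\to\Mi$ (so that $\left<Lf,\mu\right>=\left<f,\widetilde L\mu\right>$ for all $f\in\Lpb$, $\mu\in\Mi$), then $\norm{\widetilde L}_{\Mi\to\Mi}=\norm{L}_{\Lpb\to\Lpb}$. Existence of the pre-duals $\widetilde U^{t,s}$, $\widetilde H_t[\mu]$, $\widetilde A^{t,s}[\mu]$ is already granted (cf.\ \citep[Theorem~2.10]{Kolok10b} and the remark preceding this proposition), so the task reduces to verifying this norm identity and then substituting the $\Lpb$-operator-norm estimates obtained in the previous section.

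First I would prove the identity in both directions. The inequality $\norm{\widetilde L}_{\Mi\to\Mi}\le\norm{L}_{\Lpb\to\Lpb}$ is immediate from \defref{d:normds}: since $\norm{\nu}_{B^\star}$ is by definition the total variation norm $\sup_{f\neq0}\abs{\left<f,\nu\right>}/\normoi f$, one has $\norm{\widetilde L\mu}_{B^\star}=\sup_{f\neq0}\abs{\left<Lf,\mu\right>}/\normoi f\le\norm{\mu}_{B^\star}\,\norm{L}_{\Lpb\to\Lpb}$. For the reverse inequality I would test against point masses: each $\delta_{(x,y)}\in\Mi$ has $\norm{\delta_{(x,y)}}_{B^\star}=1$, and because the norm on $\Lpb$ is the genuine supremum (\defref{d:B}), $\normoi{Lf}=\sup_{x,y}\abs{\left<f,\widetilde L\delta_{(x,y)}\right>}\le\normoi f\,\norm{\widetilde L}_{\Mi\to\Mi}$, giving $\norm{L}_{\Lpb\to\Lpb}\le\norm{\widetilde L}_{\Mi\to\Mi}$. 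It is worth flagging that this second step genuinely uses the supremum and not the essential supremum norm on $\Lpb$ --- this is exactly why Definitions~\ref{d:BY} and~\ref{d:B} were set up that way --- but otherwise the argument is routine.

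With the identity in hand the conclusions follow. Taking $L=A^{s,t}[\mu]$ yields the first equality, and the exponential bound is then \propref{p:ApropB}. Linearity of the pre-dual construction shows that $\widetilde A^{t,s}[\mu]-\widetilde A^{t,s}[\nu]$ is the pre-dual of $A^{s,t}[\mu]-A^{s,t}[\nu]$, so the same identity gives the two remaining equalities, and the two Lipschitz estimates are read off from \propref{p:lipsch}, using the elementary bound $\int_s^t\norm{\mu_r-\nu_r}_{B^\star}\dd r\le(t-s)\esssup_{r\in[s,t]}\norm{\mu_r-\nu_r}_{B^\star}$ to match the integral and essential-supremum forms stated here. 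I do not anticipate a genuine obstacle: the only point needing care is that the pairing be non-degenerate enough to recover the supremum norm on $\Lpb$ (point masses do this) and that the pre-duals exist, and both are already available.
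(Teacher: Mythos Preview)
Your proposal is correct and is precisely what the paper's one-line proof (``Duality and Proposition~\ref{p:ApropB}'') abbreviates: you make explicit the norm identity $\norm{\widetilde L}_{\Mi\to\Mi}=\norm{L}_{\Lpb\to\Lpb}$ via the pairing (the point-mass argument for the reverse inequality is the right justification, and you correctly flag that it hinges on $\Lpb$ carrying the true supremum norm), and then invoke \propref{p:ApropB} and \propref{p:lipsch} for the bounds. The only remark is that \propref{p:lipsch} already states both the integral and the essential-supremum forms separately, so you need not derive one from the other.
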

\begin{proof}
Duality and Proposition~\ref{p:ApropB}.
\end{proof}

\begin{prop}\label{p:dualderiv}
Let $0\leq s\leq t < T$, $\mu \in L^\infty\left([0,T),\left(\Mi,\norm{\cdot}_{B^\star}\right)\right)$, $c \in \Mi$,
$f \in \Wopb$ and assume H1 holds, then for almost all $t$
\begin{equation*}
\deriv{t}\left<f,\widetilde{A}^{t,s}[\mu]c\right>=\left<A_t[\mu]f,\widetilde{A}^{t,s}[\mu]c\right>.
\end{equation*}
\end{prop}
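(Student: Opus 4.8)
The plan is to obtain the identity by pure duality, reducing it to the generator relation for the dual propagator on \Lpb already proved in Proposition~\ref{p:ApropB}; no new estimates are needed, so the argument should be about as short as that for Proposition~\ref{p:Adual}. The starting point is the defining relation of the pre-dual (Definition~\ref{d:dualops}): for every $g\in\Lpb$, $c\in\Mi$ and $0\leq s\leq t<T$,
\begin{equation*}
\left<g,\widetilde{A}^{t,s}[\mu]c\right>=\left<A^{s,t}[\mu]g,c\right>.
\end{equation*}
Applying this with $g=f$ rewrites the left-hand side of the claim as $\left<A^{s,t}[\mu]f,c\right>$. I would also note that for $f\in\Wopb$ one has $A_t[\mu]f=U_tf+H_t[\mu]f\in\Lpb$ for almost all $t$: the transport term $u_t\cdot\nabla f$ is bounded and measurable since $\nabla f\in B^d$ and $u$ is bounded, and $H_t[\mu]f\in\Lpb$ by Definition~\ref{d:Htstar} whenever $\mu_t$ is a genuine bounded measure. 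Hence the same relation applied with $g=A_t[\mu]f$ gives $\left<A^{s,t}[\mu]A_t[\mu]f,c\right>=\left<A_t[\mu]f,\widetilde{A}^{t,s}[\mu]c\right>$.

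It then remains to differentiate $t\mapsto\left<A^{s,t}[\mu]f,c\right>$ in $t$ and pull the derivative through the pairing. Proposition~\ref{p:ApropB} gives $\deriv{t}A^{s,t}[\mu]f=A^{s,t}[\mu]A_t[\mu]f$ for almost all $t$; in integrated form this reads $A^{s,t}[\mu]f=f+\int_s^tA^{s,r}[\mu]A_r[\mu]f\,\dd r$ as a Bochner integral in $\left(\Lpb,\normoi{\cdot}\right)$, the integrand being integrable because $\normoi{A^{s,r}[\mu]A_r[\mu]f}\leq\norm{A^{s,r}[\mu]}_{\Lpb\rightarrow\Lpb}\normoi{A_r[\mu]f}$, with the first factor bounded by the exponential of Proposition~\ref{p:ApropB} and $\normoi{A_r[\mu]f}\leq\norm{u}_\infty\normoid{\nabla f}+\frac32K_\infty\Cho\opnorm{\mu}_{B^\star}\normoi{f}$ essentially bounded in $r$ (using Proposition~\ref{p:Hbound} for the coagulation part). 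Since $c$ acts as a bounded linear functional on $\left(\Lpb,\normoi{\cdot}\right)$ with norm $\norm{c}_{B^\star}$, it commutes with this integral, so $\left<A^{s,t}[\mu]f,c\right>=\left<f,c\right>+\int_s^t\left<A^{s,r}[\mu]A_r[\mu]f,c\right>\,\dd r$; differentiating in $t$ and chaining the two pre-dual identities above yields $\deriv{t}\left<f,\widetilde{A}^{t,s}[\mu]c\right>=\left<A_t[\mu]f,\widetilde{A}^{t,s}[\mu]c\right>$ for almost every $t$, which is the assertion.

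The one step I would treat carefully — the main (and essentially only) obstacle — is the passage to this integral representation: Proposition~\ref{p:ApropB} states only a.e.\ differentiability of $t\mapsto A^{s,t}[\mu]f$, and a.e.\ differentiability alone does not recover a curve as the integral of its derivative. What is needed is absolute continuity of $t\mapsto A^{s,t}[\mu]f$ into $\left(\Lpb,\normoi{\cdot}\right)$, and I would point out that this is already implicit in the proof of Proposition~\ref{p:ApropB}: for $m\geq 1$ each term $f_{s,t}^m$ of the series in Definition~\ref{d:Asprop} is an iterated time integral, the $m=0$ term $U^{s,t}f$ is $\normoi{\cdot}$-Lipschitz in $t$ on \Wopb precisely because of the outflow boundary condition built into \Wopb, and the series converges absolutely and uniformly by Proposition~\ref{p:zero}; hence the sum is absolutely continuous in $t$. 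Once this observation is made explicit, everything else is a mechanical application of the duality pairing.
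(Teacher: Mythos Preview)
Your argument is correct and is precisely the route the paper takes---the paper's proof reads simply ``Duality and Proposition~\ref{p:ApropB}'', and you have spelled out exactly what that means, including the duality identity $\left<g,\widetilde{A}^{t,s}[\mu]c\right>=\left<A^{s,t}[\mu]g,c\right>$ and the generator relation for $A^{s,t}[\mu]$. Your additional care about absolute continuity of $t\mapsto A^{s,t}[\mu]f$ (to justify the integral representation from the a.e.\ derivative) is a worthwhile clarification that the paper leaves implicit.
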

\begin{proof}
Duality and Proposition~\ref{p:ApropB}
\end{proof}

\subsection{The Fixed Point Mapping}

This section presents a Picard iteration method for \eqref{eq:smol} highlighting the roles of the $B^\star$
and $D^\star$ norms on the space of measures.  The mapping that will be shown to have a fixed point is:
\begin{defn}\label{d:Psi}
Suppose $c_0 \in \Mi$, $0\leq  t < T$, let
$\mu \in L^\infty\left([0,T),\left(\Mi,\norm{\cdot}_{B^\star}\right)\right)$ and suppose H1 holds.  Define
$\Psi_{c_0}\colon L^\infty\left([0,T),\left(\Mi,\norm{\cdot}_{B^\star}\right)\right)\rightarrow L^\infty\left([0,T),\left(\Mi,\norm{\cdot}_{B^\star}\right)\right)$ by
\begin{equation}
\Psi_{c_0}(\mu)(t) = \widetilde{A}^{t,0}[\mu] c_0 +\int_0^t \widetilde{A}^{t,s}[\mu] I_s \dd s.
\end{equation}
\end{defn}

\begin{prop}\label{p:Psiprop}
Under the assumptions of Definition~\ref{d:Psi} one has
$\Psi_{c_0}(\mu) \in C_\mathrm{b}\left([0,T],\left(\Mi,\norm{\cdot}_{B^\star}\right)\right)$ with
\begin{equation*}
\norm{\Psi_{c_0}(\mu)(t)}_{B^\star}
\leq
e^{\frac32 K_\infty \Cho \opnorm{\mu}_{B^\star}\min(t, t_0)}
\left(\norm{c_0}_{B^\star} \II\left(t-s \leq t_0\right) + 
       \frac{2\sup_s \norm{I_s}_{B^\star}}{3 K_\infty \Cho \opnorm{\mu}_{B^\star}}\right).
\end{equation*}
The time derivative exists for almost all $t\in(0,T)$ with
\begin{equation}
\norm{\deriv{t}\Psi_{c_0}(\mu)(t)}_{\Wopb^\star} \leq
\norm{I_t}_{\Wopb^\star} +
 \left(\norm{u}_\infty +  \frac32 K_\infty \Cho\norm{\mu_t}_{B^\star}\right)\norm{\Psi_{c_0}(\mu)(t)}_{B^\star}
\end{equation}
and if $\mu \in C\left([0,T),\left(\Mi,\norm{\cdot}_{B^\star}\right)\right)$
then $\Psi_{c_0}(\mu)\in C^1\left((0,T),\left(\Mi,\norm{\cdot}_{D^\star}\right)\right).$
\end{prop}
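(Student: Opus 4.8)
The plan is to verify the three assertions of Proposition~\ref{p:Psiprop} in turn, each one a direct consequence of the propagator estimates already established. For the first bound, I would apply the operator-norm estimate of Proposition~\ref{p:Adual} (equivalently Proposition~\ref{p:ApropB} via duality) to each of the two terms in $\Psi_{c_0}(\mu)(t)$. The first term gives $\norm{\widetilde{A}^{t,0}[\mu]c_0}_{B^\star} \leq e^{\frac32 K_\infty \Cho \opnorm{\mu}_{B^\star} t}\II(t\leq t_0)\norm{c_0}_{B^\star}$. For the integral term, I would pull the norm inside the integral, use the same operator bound with the indicator $\II(t-s\leq t_0)$, and bound $\norm{I_s}_{B^\star}$ by its supremum; the remaining $\int_0^t e^{\frac32 K_\infty \Cho \opnorm{\mu}_{B^\star}(t-s)}\dd s$ is elementary and produces the factor $\frac{2}{3K_\infty\Cho\opnorm{\mu}_{B^\star}}\bigl(e^{\frac32 K_\infty\Cho\opnorm{\mu}_{B^\star}\min(t,t_0)}-1\bigr)$, which is dominated by the claimed expression. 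Continuity in $t$ in the $B^\star$ norm follows from the strong continuity statement in Proposition~\ref{p:ApropB} together with the continuity hypothesis I1 on $I$ in the $D^\star$ norm — although here one must be slightly careful, since the integrand is only controlled in $D^\star$ while the claim is in $B^\star$; I expect one resolves this by noting that on $[0,T)$ the relevant $\widetilde{A}^{t,s}$ factors map into a space where the two norms are comparably controlled on the measures produced, or more simply by observing boundedness in $B^\star$ plus $D^\star$-continuity gives $B^\star$-weak continuity, which upgraded via the explicit series representation gives norm continuity.

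For the derivative bound, I would differentiate $\Psi_{c_0}(\mu)(t)$ using Proposition~\ref{p:dualderiv} for the $\widetilde{A}^{t,0}[\mu]c_0$ term and the Leibniz rule for the integral $\int_0^t \widetilde{A}^{t,s}[\mu]I_s\dd s$, whose boundary contribution at $s=t$ is $I_t$ (since $\widetilde{A}^{t,t}[\mu] = \mathrm{id}$) and whose interior contribution is $\int_0^t A_t[\mu]^\star \widetilde{A}^{t,s}[\mu]I_s\dd s$ in the duality pairing. Pairing against an arbitrary $f\in\Wopb$ and using $A_t[\mu] = U_t + H_t[\mu]$ with $\norm{U_t f}_B \leq \norm{u}_\infty \norm{\nabla f}_{B^d} \leq \norm{u}_\infty\normooi{f}$ and the $H$-bound from Proposition~\ref{p:Hbound}, one collects $\bigl(\norm{u}_\infty + \frac32 K_\infty\Cho\norm{\mu_t}_{B^\star}\bigr)$ times the $B^\star$ norm of the combined term $\widetilde{A}^{t,0}[\mu]c_0 + \int_0^t\widetilde{A}^{t,s}[\mu]I_s\dd s = \Psi_{c_0}(\mu)(t)$, plus the separated $I_t$ term measured in $\Wopb^\star$. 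This gives exactly the stated inequality once one takes the supremum over $\normooi{f}=1$. The "almost all $t$" qualifier is inherited from Proposition~\ref{p:dualderiv} and from the fact that $t\mapsto I_t$ and $t\mapsto\mu_t$ need only be essentially bounded.

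For the final claim, that $\Psi_{c_0}(\mu)\in C^1\bigl((0,T),(\Mi,\norm{\cdot}_{D^\star})\bigr)$ when $\mu$ is $B^\star$-continuous: here the point is that once $\mu$ is genuinely continuous (not merely essentially bounded), the derivative expression $\deriv{t}\Psi_{c_0}(\mu)(t)$ computed above is itself continuous in $t$ in the $D^\star$ norm. I would argue this term by term: $I_t$ is $D^\star$-continuous by I1; $\widetilde{A}^{t,s}[\mu]$ depends continuously on $t$, $s$, and on $\mu$ (via Proposition~\ref{p:Adual}'s Lipschitz estimate in $\mu$); and $t\mapsto\Psi_{c_0}(\mu)(t)$ is $B^\star$-continuous by the first part, so that the product $\bigl(\norm{u}_\infty + \frac32 K_\infty\Cho\norm{\mu_t}_{B^\star}\bigr)$-weighted combination, read as an element of $\Wopb^\star$, varies continuously. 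A derivative that exists a.e.\ and extends to a continuous function, with the function itself absolutely continuous, is a genuine $C^1$ derivative — so I would close by invoking that the $B^\star$-valued (hence $D^\star$-valued, since $\norm{\cdot}_{D^\star}\leq\norm{\cdot}_{B^\star}$ on $\Mi$) map $\Psi_{c_0}(\mu)$ is Lipschitz in $t$ with the a.e.\ derivative just identified, and that this derivative has a continuous $D^\star$ representative.

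The main obstacle I anticipate is the mismatch of norms in establishing the $C^1$ regularity: the derivative is naturally controlled only in the weaker $\Wopb^\star$ norm (because $U_t$ is an unbounded operator relative to $B$), while the continuity of $\mu$ is given in $B^\star$, and one must be disciplined about which norm each estimate lives in and why strong continuity of the propagator (Proposition~\ref{p:ApropB}) plus the $\mu$-Lipschitz estimate (Proposition~\ref{p:Adual}) together suffice to promote a.e.\ differentiability to continuous differentiability in $D^\star$. The bookkeeping of the Leibniz-rule boundary term at $s=t$, and checking that $\widetilde A^{t,t}[\mu]$ is the identity from the series in Definition~\ref{d:Asprop}, is routine but should be spelled out so the $I_t$ term is correctly isolated.
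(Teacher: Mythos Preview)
Your proposal is correct and follows the same route as the paper: the $B^\star$ bound comes from the operator-norm estimate in Proposition~\ref{p:Adual}, continuity from the $t$-continuity of $\widetilde{A}^{t,0}[\mu]$, and the derivative from differentiating the defining formula via Proposition~\ref{p:dualderiv} together with the Leibniz boundary term $I_t$. The paper's own proof is extremely terse (three sentences), and your write-up is simply a fleshed-out version of it; in particular your caution about the $B^\star$ versus $D^\star$ norm when arguing continuity is well taken---the paper glosses over this point just as briefly as it does everything else here.
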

\begin{proof}
$\Lpb^\star$ boundedness is a consequence of Proposition~\ref{p:Adual} and continuity follows from
the continuity in $t$ of $\widetilde{A}^{t,0}[\mu] $.

For the time derivative differentiate the formula in Definition~\ref{d:Psi}, and use Proposition~\ref{p:dualderiv}.
\end{proof}

\begin{prop}\label{p:fixedsoln}
Suppose $c_0 \in \Mi$, $T\in(0,\infty)$, H2 or H3 holds and
$c \colon [0,T) \rightarrow \left(\Mi,\norm{\cdot}_{B^\star}\right)$ is a bounded solution
to \eqref{eq:smol} with initial condition $c_0$, then $c$ is a fixed point of $\Psi_{c_0}$.
\end{prop}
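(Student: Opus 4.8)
The statement is a Duhamel (variation-of-constants) representation. With the solution $c$ itself substituted for the path parameter, \eqref{eq:smol} is equivalent, by \eqref{eq:Atstar} together with the design of $H$ in Definition~\ref{d:Htstar}, to saying that $\deriv{t}\left<f,c_t\right>=\left<A_t[c]f,c_t\right>+\left<f,I_t\right>$ for every $f\in\Wopb$, i.e.\ to a linear inhomogeneous Banach-space ODE whose homogeneous evolution is exactly the propagator $\widetilde{A}^{t,s}[c]$ of Definition~\ref{d:dualops}. So the plan is to fix $t\in(0,T)$ and a test function $\phi$, differentiate the scalar function $v(s):=\left<A^{s,t}[c]\phi,c_s\right>=\left<\phi,\widetilde{A}^{t,s}[c]c_s\right>$ on $[0,t]$, show that the two ``generator'' contributions cancel so that $v'(s)=\left<A^{s,t}[c]\phi,I_s\right>$, and then integrate: using $A^{t,t}[c]=\mathrm{id}$ and the duality $\left<A^{s,t}[c]\phi,\mu\right>=\left<\phi,\widetilde{A}^{t,s}[c]\mu\right>$ one obtains $\left<\phi,c_t\right>=\left<\phi,\widetilde{A}^{t,0}[c]c_0+\int_0^t\widetilde{A}^{t,s}[c]I_s\,\dd s\right>=\left<\phi,\Psi_{c_0}(c)(t)\right>$, whence $c_t=\Psi_{c_0}(c)(t)$ once $\phi$ ranges over a class of test functions that separates \Mi{} (the case $t=0$ being immediate).

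For the differentiation, Proposition~\ref{p:ApropD} applies — under H3, or under H2 with $c$, being a solution, taken continuous and hence bounded for every $s$ — so $A^{s,t}[c]$ preserves \Wopb, $A^{s,t}[c]\phi\in\Wopb$, and $\deriv{s}A^{s,t}[c]\phi=-A_s[c]A^{s,t}[c]\phi$ in $B$ for a.e.\ $s\in[0,t]$. Two Lipschitz facts then follow: $s\mapsto A^{s,t}[c]\phi$ is $B$-Lipschitz (its $B$-derivative is bounded, since $\normoi{A_s[c]g}\le\norm{u}_\infty\normooi{g}+\tfrac32 K_\infty\Cho\opnorm{c}_{B^\star}\normoi{g}$ and $\normooi{A^{s,t}[c]\phi}$ is bounded by Proposition~\ref{p:ApropD}), and $s\mapsto c_s$ is $\Wopb^\star$-Lipschitz, because $\bigl|\left<A_s[c]f,c_s\right>\bigr|+\bigl|\left<f,I_s\right>\bigr|\le C\normooi{f}$ uniformly in $s$ (using $\opnorm{c}_{B^\star}<\infty$ and $\sup_s\norm{I_s}_{B^\star}<\infty$). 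In particular $v$ is Lipschitz, hence absolutely continuous, and $s\mapsto\left<f,c_s\right>$ is absolutely continuous for every $f\in\Wopb$, so \eqref{eq:smol} integrates. At a point $s$ where both derivative facts hold I would split
\[
\frac{v(s+h)-v(s)}{h}=\left<\frac{A^{s+h,t}[c]\phi-A^{s,t}[c]\phi}{h},c_s\right>+\left<\frac{A^{s+h,t}[c]\phi-A^{s,t}[c]\phi}{h},c_{s+h}-c_s\right>+\left<A^{s,t}[c]\phi,\frac{c_{s+h}-c_s}{h}\right>;
\]
the first term tends to $-\left<A_s[c]A^{s,t}[c]\phi,c_s\right>$ by Proposition~\ref{p:ApropD}, the third to $\left<A_s[c]A^{s,t}[c]\phi,c_s\right>+\left<A^{s,t}[c]\phi,I_s\right>$ by applying the integrated form of \eqref{eq:smol} with the \emph{frozen} test function $A^{s,t}[c]\phi\in\Wopb$, and the two $A_s[c]$-terms cancel exactly — this cancellation, which mirrors the pair of generator relations $\deriv{t}A^{s,t}=A^{s,t}A_t$, $\deriv{s}A^{s,t}=-A_sA^{s,t}$, is the heart of the argument.

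The hard part will be the middle term $h^{-1}\left<A^{s+h,t}[c]\phi-A^{s,t}[c]\phi,c_{s+h}-c_s\right>$: bounding it by $\normoi{A^{s+h,t}[c]\phi-A^{s,t}[c]\phi}\,\norm{c_{s+h}-c_s}_{B^\star}/h$ is useless, since a TV-bounded measure-valued solution need not be TV-continuous (transport moves atoms discontinuously in TV), while bounding it by $\normooi{A^{s+h,t}[c]\phi-A^{s,t}[c]\phi}\,\norm{c_{s+h}-c_s}_{\Wopb^\star}/h\le C\,\normooi{A^{s+h,t}[c]\phi-A^{s,t}[c]\phi}$ requires $s\mapsto A^{s,t}[c]\phi$ to be continuous in the \emph{$\Wopb$-norm}, which can fail for a general $\phi\in\Wopb$ because $U^{s,t}$ composes $\nabla\phi$ with the flow and the supremum norm does not absorb a merely measurable gradient. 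I would circumvent this by running the whole argument only over test functions of product form $\phi(x,y)=\varphi(x)\psi(y)$ with $\varphi\in C^1_{\mathrm{c}}(\sX)$ and $\psi\in\mathcal{B}_\mathrm{b}(\sY)$: these lie in \Wopb{} (they vanish near $\partial\sX$, so in particular satisfy the outflow condition), a $\pi$--$\lambda$ argument shows that $\left<\phi,\mu\right>=0$ for all such $\phi$ forces $\mu=0$, and — since $\nabla\varphi$ is uniformly continuous on its compact support while $\Phi_{s,t}$ and its Jacobian depend continuously on $s$ uniformly on $\overline{\sX}$ — the composition $U^{s,t}\phi$, hence each term of the Dyson series of Definition~\ref{d:Asprop} defining $A^{s,t}[c]\phi$ (the $m\ge1$ terms handled similarly, the extra $\sigma$-integration and, under H2, precisely the bounded-variation bounds of that hypothesis supplying the needed regularity), is $\Wopb$-continuous in $s$, so the middle term is $O\bigl(\normooi{A^{s+h,t}[c]\phi-A^{s,t}[c]\phi}\bigr)\to 0$. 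With $\left<\phi,c_t\right>=\left<\phi,\Psi_{c_0}(c)(t)\right>$ then established for all $\phi$ in this separating class and every $t$, the proof is complete.
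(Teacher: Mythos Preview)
Your approach is exactly the paper's: differentiate $r\mapsto\left<f,\widetilde{A}^{t,r}[c]c_r\right>=\left<A^{r,t}[c]f,c_r\right>$ using Proposition~\ref{p:ApropD} and the solution property so that the two generator contributions cancel and only $\left<A^{r,t}[c]f,I_r\right>$ survives, then integrate over $[0,t]$. The paper's proof is a two-line sketch that simply asserts $\partial_r\left<f,\widetilde{A}^{t,r}[c]c_r\right>=\left<f,\widetilde{A}^{t,r}[c]I_r\right>$, whereas you supply the product-rule justification in detail---in particular you identify and resolve the cross term via $\Wopb^\star$-Lipschitz continuity of $c$ together with $\Wopb$-continuity of $s\mapsto A^{s,t}[c]\phi$ on a separating subclass of smoother test functions---so your argument is a fleshed-out version of the same route rather than a different one.
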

\begin{proof}
Suppose $c$ to be a solution of \eqref{eq:smol} and let $t\in[0,T)$, then using duality and
Proposition~\ref{p:ApropD} one finds
\begin{equation}
\pderiv{r}\left<f,\widetilde{A}^{t,r}[c]c_r\right>=\left<f,\widetilde{A}^{t,r}[c]I_r\right>.
\end{equation}
Integrating over $r\in[0,t]$ completes the result.
This (standard) argument can be found, for example, in \citep[\S5.1]{Pazy83}.
\end{proof}

Proposition~\ref{p:fixedsoln} is the only place where one requires H2 or H3 in the existence and
uniqueness analysis.  This is in order to invoke Proposition~\ref{p:ApropD}
and more fundamentally so that $A^{s,t}[\mu]$
preserves \Wopb; otherwise one cannot give meaning to $\deriv{r}\widetilde{A}^{t,r}[\mu]$.  Without this
result it still follows that the mapping $\Psi$ has unique fixed point with all the advertised properties
(in particular solving \eqref{eq:smol}),
but one cannot rule out the possibility that there are additional (possibly less regular) solutions to
\eqref{eq:smol}.  These conclusions are stated more formally in Proposition~\ref{p:locexist} for which
two preparatory results are needed.

\begin{prop}\label{p:contrcontain}
Let $c_0 \in \Mi$, $M\in \RRP$ be large enough to satisfy
\begin{equation*}
M > \norm{c_0}_{B^\star} + \frac{2\sup_s \norm{I_s}_{B^\star}}{3 K_\infty \Cho M},
\end{equation*}
define $E_M=\left\{\mu \in \Mi \colon \norm{\mu}_{B^\star} \leq M \right\}$ and assume H1 holds.
Then there exists a $\tau_M>0$ such that $\Psi_{c_0}$ preserves $L^\infty\left([0,\tau_M),\left(E_M,\norm{\cdot}_{B^\star}\right)\right)$.
\end{prop}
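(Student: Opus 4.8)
The plan is to prove that the closed $B^\star$-ball $E_M$ is preserved by $\Psi_{c_0}$ on a short enough time interval, simply by substituting the propagator norm estimate of Proposition~\ref{p:Adual} into the explicit formula for $\Psi_{c_0}$ recorded in Definition~\ref{d:Psi}. Concretely, I would fix an arbitrary $\mu\in L^\infty\left([0,\tau_M),\left(E_M,\norm{\cdot}_{B^\star}\right)\right)$, so that $\opnorm{\mu}_{B^\star}\leq M$, and from the outset require $\tau_M\leq t_0$; on that range every residence-time indicator $\II(t-s\leq t_0)$ occurring in the propagator bounds is identically $1$, which removes any case distinction. For $t\in[0,t_0]$ I would then write $\Psi_{c_0}(\mu)(t)=\widetilde{A}^{t,0}[\mu]c_0+\int_0^t\widetilde{A}^{t,s}[\mu]I_s\,\dd s$, take $\norm{\cdot}_{B^\star}$-norms, apply the triangle inequality, and use Proposition~\ref{p:Adual} in the form $\norm{\widetilde{A}^{t,s}[\mu]}_{\Mi\rightarrow\Mi}\leq e^{\frac32 K_\infty\Cho\opnorm{\mu}_{B^\star}(t-s)}\leq e^{\frac32 K_\infty\Cho M(t-s)}$.

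Bounding the inhomogeneous integrand by its supremum over $s$ and using $\int_0^t e^{\frac32 K_\infty\Cho M(t-s)}\,\dd s\leq t\,e^{\frac32 K_\infty\Cho M t}$ then yields
\begin{equation*}
\norm{\Psi_{c_0}(\mu)(t)}_{B^\star}\leq e^{\frac32 K_\infty\Cho M t}\left(\norm{c_0}_{B^\star}+t\sup_s\norm{I_s}_{B^\star}\right)=:g(t).
\end{equation*}
The function $g$ is continuous on $[0,t_0]$ with $g(0)=\norm{c_0}_{B^\star}$, and the hypothesis $M>\norm{c_0}_{B^\star}+\frac{2\sup_s\norm{I_s}_{B^\star}}{3K_\infty\Cho M}$ in particular forces $\norm{c_0}_{B^\star}<M$; hence one can pick $\tau_M\in(0,t_0]$ with $g(t)\leq M$ for every $t\in[0,\tau_M)$. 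For that $\tau_M$ one has $\Psi_{c_0}(\mu)(t)\in E_M$ throughout $[0,\tau_M)$, and combining this with the $\norm{\cdot}_{B^\star}$-continuity of $\Psi_{c_0}(\mu)$ established in Proposition~\ref{p:Psiprop} gives $\Psi_{c_0}(\mu)\in L^\infty\left([0,\tau_M),\left(E_M,\norm{\cdot}_{B^\star}\right)\right)$, which is the assertion.

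There is no serious obstacle here, but two points deserve care. First, one should \emph{not} simply quote the bound of Proposition~\ref{p:Psiprop}, whose inhomogeneous term carries $\opnorm{\mu}_{B^\star}$ in the denominator and therefore blows up for measure paths of small norm (the zero path in particular); retaining the contribution of $I$ as $t\sup_s\norm{I_s}_{B^\star}$---equivalently, using $1-e^{-at}\leq at$ when evaluating the integral explicitly---avoids this. Second, I note that only $\norm{c_0}_{B^\star}<M$ is actually used for this local statement: the stronger quadratic condition on $M$ in the hypothesis is stated here because the same $M$ will be reused in the global-in-time bounds, where that full inequality is genuinely needed.
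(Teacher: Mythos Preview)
Your proof is correct and follows essentially the same route as the paper: insert the propagator bound from Proposition~\ref{p:Adual} into Definition~\ref{d:Psi} and observe that the resulting upper bound is below $M$ for small $t$. The only noteworthy difference is that the paper evaluates $\int_0^t e^{\frac32 K_\infty\Cho M(t-s)}\dd s \leq \frac{2}{3K_\infty\Cho M}e^{\frac32 K_\infty\Cho M\min(t,t_0)}$ rather than your cruder $t\,e^{\frac32 K_\infty\Cho M t}$, which is precisely why the full hypothesis on $M$ is invoked there (it makes $r_M>1$) and yields both an explicit $\tau_M = \frac{2\log r_M}{3K_\infty\Cho M}$ and the observation that $\tau_M=\infty$ once $t_0$ is below this threshold; your remark that the quadratic condition is only needed later is therefore not quite right, though your argument stands on its own with the weaker assumption $\norm{c_0}_{B^\star}<M$.
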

\begin{proof}
Let $r_M> 1$ be given by
\begin{equation}
r_M \left(\norm{c_0}_{B^\star} + \frac{2\sup_s \norm{I_s}_{B^\star}}{3 K_\infty \Cho M}\right) = M
\end{equation}
and suppose $\mu \in L^\infty\left([0,T),\left(E_M,\norm{\cdot}_{B^\star}\right)\right)$ for some $T>0$.
Use Definition~\ref{d:Psi} along with the operator norm estimate
from Proposition~\ref{p:Adual} to see that, for $t < T$
\begin{equation}
\norm{\Psi_{c_0}(\mu)(t)}_{B^\star} \\
\leq
e^{\frac32 K_\infty \Cho M\min(t,t_0)} \frac{M}{r_M} 
\end{equation}
and so $\norm{\Psi_{c_0}(\mu)(t)}_{B^\star} \leq M$ if
$\min(t,t_0) \leq \frac{2\log r_M}{3K_\infty \Cho M}$.
Hence it is sufficient to take $\tau_M = \frac{2\log r_M}{3K_\infty \Cho M}$ and if $t_0$, the maximum residence
time for a particle, satisfies 
$t_0 \leq \frac{2\log r_M}{3K_\infty \Cho M}$ then one may take $\tau_M = \infty$.
\end{proof}

\begin{prop}\label{p:locfix}
Let $c_0 \in \Mi$ and $E_M,\tau_M$ be as in Proposition~\ref{p:contrcontain} and assume H1 holds,
then there is a $\tau_M^\prime \leq \tau_M$ such that $\Psi_{c_0}$ is a contraction on
$L^\infty\left([0,\tau_M^\prime),\left(E_M, \norm{\cdot}_{B^\star}\right)\right)$.
\end{prop}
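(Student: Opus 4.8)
The plan is to estimate $\norm{\Psi_{c_0}(\mu)(t) - \Psi_{c_0}(\nu)(t)}_{B^\star}$ for $\mu,\nu \in L^\infty\bigl([0,\tau_M),\left(E_M,\norm{\cdot}_{B^\star}\right)\bigr)$ and show that for a short enough time horizon the map is a contraction. Subtracting the two defining formulas from Definition~\ref{d:Psi} gives
\begin{equation*}
\Psi_{c_0}(\mu)(t) - \Psi_{c_0}(\nu)(t)
= \bigl(\widetilde{A}^{t,0}[\mu] - \widetilde{A}^{t,0}[\nu]\bigr) c_0
+ \int_0^t \bigl(\widetilde{A}^{t,s}[\mu] - \widetilde{A}^{t,s}[\nu]\bigr) I_s \, \dd s.
\end{equation*}
Each difference of propagators is controlled in $\Mi \to \Mi$ operator norm by the Lipschitz estimate of Proposition~\ref{p:Adual} (the dual form of Proposition~\ref{p:lipsch}), with the common bound $M$ playing the role of $\max(\opnorm{\mu}_{B^\star},\opnorm{\nu}_{B^\star})$ since both paths lie in $E_M$.

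The key steps, in order: first apply Proposition~\ref{p:Adual} to the $c_0$ term, bounding $\norm{(\widetilde{A}^{t,0}[\mu] - \widetilde{A}^{t,0}[\nu]) c_0}_{B^\star}$ by $\frac32 K_\infty \Cho\, e^{3 K_\infty \Cho M \min(t,t_0)}\, t \cdot \esssup_{r\in[0,t]}\norm{\mu_r-\nu_r}_{B^\star}\cdot\norm{c_0}_{B^\star}$ (using the second variant of the estimate, which pulls the $\esssup$ outside). Second, treat the integral term the same way, picking up an extra factor $t$ from the outer integration and a factor $\sup_s\norm{I_s}_{B^\star}$. Third, combine: there is a constant $C = C(K_\infty,\Cho,M,\norm{c_0}_{B^\star},\sup_s\norm{I_s}_{B^\star})$ and an exponential factor, uniformly bounded on $[0,\tau_M)$, so that
\begin{equation*}
\opnorm{\Psi_{c_0}(\mu) - \Psi_{c_0}(\nu)}_{B^\star}
\le C\, \tau \, e^{3 K_\infty \Cho M \min(\tau,t_0)} \, \opnorm{\mu-\nu}_{B^\star}
\end{equation*}
on $[0,\tau)$ for any $\tau \le \tau_M$. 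Fourth, choose $\tau_M' \le \tau_M$ small enough that the prefactor is $< 1$; since the bracket tends to $0$ as $\tau \to 0$ while the exponential stays bounded, such a $\tau_M'$ exists. One should note that $\Psi_{c_0}$ maps $L^\infty\bigl([0,\tau_M'),(E_M,\norm{\cdot}_{B^\star})\bigr)$ into itself by Proposition~\ref{p:contrcontain} (restricting the time interval only helps), so the contraction statement is meaningful on that complete metric space.

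The main obstacle is bookkeeping rather than conceptual: one must be careful that the time horizon $\tau_M$ from Proposition~\ref{p:contrcontain} may be infinite when $t_0$ is small, so the contraction factor must be made small by shrinking $\tau$ directly (relying on the linear-in-$\tau$ prefactor) rather than by any decay of the exponential — the $\min(\tau,t_0)$ in the exponent is harmless precisely because it is bounded by $t_0$. A secondary point worth stating explicitly is that $L^\infty\bigl([0,\tau_M'),(E_M,\norm{\cdot}_{B^\star})\bigr)$ is a complete metric space under the metric induced by $\opnorm{\cdot}_{B^\star}$ (closed ball in a Banach space of $\Mi$-valued functions), which is what lets us invoke the Banach fixed point theorem afterwards.
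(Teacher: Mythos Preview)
Your proposal is correct and follows essentially the same route as the paper: decompose $\Psi_{c_0}(\mu)(t)-\Psi_{c_0}(\nu)(t)$ into the $c_0$ term and the $I_s$ integral, apply the Lipschitz estimate for the propagators (Proposition~\ref{p:lipsch}/\ref{p:Adual}) with $M$ bounding both path norms, and then shrink the time horizon so the resulting prefactor is below $1$. The only cosmetic differences are in the precise constants (the paper keeps the sharper $e^{\frac32 K_\infty \Cho M t}$ exponent from the second Lipschitz variant and tracks the $\II(t\leq t_0)$ indicators explicitly), and your remark on completeness of the ball $E_M$ is a useful addition that the paper leaves implicit.
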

\begin{proof}

Suppose $\mu$ and $\nu$ are in $L^\infty\left([0,\tau_M),\left(E_M, \norm{\cdot}_{B^\star}\right)\right)$
$f \in \Lpb$ and $t\in[0,\tau_M)$, then by Proposition~\ref{p:lipsch}
\begin{multline}
\norm{\Psi_{c_0}(\mu)(t) -\Psi_{c_0}(\nu)(t)}_{\Lpb^\star} \\
\leq
   \norm{\widetilde{A}^{t,0}[\mu]-\widetilde{A}^{t,0}[\nu]}_{B\rightarrow B}
   \norm{c_0}_{B^\star}\II\left(t\leq t_0\right)\\
 +\int_0^t    \norm{\widetilde{A}^{t,s}[\mu]-\widetilde{A}^{t,s}[\nu]}_{B\rightarrow B}
   \norm{I_s}_{B^\star}\II\left(t-s\leq t_0\right)\dd s\\
\leq
\frac32 K_\infty \Cho t e^{\frac32 K_\infty \Cho Mt} \norm{c_0}_{\Lpb^\star}
  \II(t\leq t_0) \esssup_{r\in[0,t_M)}  \norm{\mu_r - \nu_r}_{B^\star}\\
+\frac34 K_\infty \Cho \min(t^2,t_0^2) e^{\frac32 K_\infty \Cho M\min(t,t_0)}
   \sup_r \norm{I_r}_{\Lpb^\star} \esssup_{r\in[0,t_M)} \norm{\mu_r - \nu_r}_{B^\star}.
\end{multline}
Hence for any $0<r<1$ one can find a $\tau_M^\prime \leq \tau_M$ such that 
\begin{equation}
\sup_{t\in[0,\tau_M^\prime)}\norm{\Psi_{c_0}(\mu^1)(t) -\Psi_{c_0}(\mu^2)(t)}_{\Lpb^\star} \leq
r\esssup_{t\in[0,\tau_M^\prime)}\norm{\mu^1_t - \mu^2_t}_{\Lpb^\star}.
\end{equation}
\end{proof}

\begin{prop}\label{p:locexist}
Let $c_0 \in \Mi$ and $E_M$ be as in Proposition~\ref{p:contrcontain}, $\tau_M^\prime$ as in
Proposition~\ref{p:locfix} and assume H1 holds, then \eqref{eq:smol} with initial condition $c_0$
has a solution on $[0,\tau_M^\prime)$ and this solution is in
$C_\mathrm{b}\left([0,\tau_M^\prime),\left(E_M, \norm{\cdot}_{B^\star}\right) \right)$.
If H2 or H3 hold this solution is unique.
\end{prop}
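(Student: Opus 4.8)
The plan is to produce the solution as the unique fixed point of $\Psi_{c_0}$ via the Banach fixed point theorem, then to check that any such fixed point is genuinely a weak solution of \eqref{eq:smol}, and finally to promote fixed-point uniqueness to solution uniqueness under H2 or H3. First I would note that $\left(\Mi,\norm{\cdot}_{B^\star}\right)$ is a Banach space (the $B^\star$ norm is total variation, \defref{d:normds}), so $E_M$ is a closed, bounded, convex set and $L^\infty\left([0,\tau_M^\prime),\left(E_M,\norm{\cdot}_{B^\star}\right)\right)$ is a complete metric space under the essential-supremum distance. \propref{p:contrcontain} gives that $\Psi_{c_0}$ maps this space into itself and \propref{p:locfix} gives that it is a contraction there; the contraction mapping principle then supplies a unique $c$ with $c=\Psi_{c_0}(c)$ in this space. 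Feeding the identity $c=\Psi_{c_0}(c)$ back into \propref{p:Psiprop} upgrades $c$ to an element of $C_\mathrm{b}\left([0,\tau_M^\prime),\left(\Mi,\norm{\cdot}_{B^\star}\right)\right)$ — in particular $c_t\in E_M$ for every $t$, not just almost every $t$ — and, now that $c$ is known to be $B^\star$-continuous, the last clause of \propref{p:Psiprop} additionally yields $c\in C^1\left((0,\tau_M^\prime),\left(\Mi,\norm{\cdot}_{D^\star}\right)\right)$.

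Next I would verify that this fixed point solves \eqref{eq:smol}. Fix $f\in\Wopb$ and differentiate $\left<f,c_t\right>=\left<f,\widetilde{A}^{t,0}[c]c_0\right>+\int_0^t\left<f,\widetilde{A}^{t,s}[c]I_s\right>\dd s$ in $t$: the Leibniz rule produces the boundary contribution $\left<f,\widetilde{A}^{t,t}[c]I_t\right>=\left<f,I_t\right>$, while \propref{p:dualderiv} differentiates $\widetilde{A}^{t,0}[c]c_0$ and, inside the integral, each $\widetilde{A}^{t,s}[c]I_s$, so that $\deriv{t}\left<f,c_t\right>=\left<A_t[c]f,c_t\right>+\left<f,I_t\right>$ for almost every $t$. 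Splitting $A_t[c]=U_t+H_t[c]$, the term $\left<U_tf,c_t\right>$ is exactly the advection term of \eqref{eq:smol} by \defref{d:tgen}, and $\left<H_t[c]f,c_t\right>$ is, after inserting \defref{d:Htstar} and relabelling the two integration variables (using the symmetry of $K$), exactly the delocalised coagulation term of \eqref{eq:smol}; hence $c$ satisfies \eqref{eq:smol} with initial datum $c_0$. This half of the argument uses only H1.

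For uniqueness under H2 or H3 I would argue as follows. By \propref{p:fixedsoln} every bounded solution of \eqref{eq:smol} on $[0,\tau_M^\prime)$ with initial datum $c_0$ is itself a fixed point of $\Psi_{c_0}$; applying \propref{p:Psiprop} to such a solution shows it is $B^\star$-continuous, and then the same connectedness argument as in the proof of \propref{p:contrcontain} — the set of $t\in[0,\tau_M^\prime)$ on which $\norm{c_s}_{B^\star}\le M$ for all $s\le t$ is nonempty (since $\norm{c_0}_{B^\star}<M$), relatively closed, and relatively open (because the propagator bound is strict for $t<\tau_M$) — shows the solution takes values in $E_M$ throughout $[0,\tau_M^\prime)$. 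It therefore lies in the complete space on which $\Psi_{c_0}$ is a contraction and so coincides with the constructed fixed point. I expect the main obstacle at this assembly stage to be precisely this confinement step: reconciling an a priori merely bounded solution with the specific ball $E_M$ in which the contraction is carried out, while keeping every estimate in the $B^\star$ (= TV) norm rather than the weaker $D^\star$ norm; the hypotheses H2 or H3 enter only through \propref{p:fixedsoln}, i.e. only in the uniqueness assertion.
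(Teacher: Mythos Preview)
Your argument tracks the paper's exactly: contraction on $L^\infty([0,\tau_M'),E_M)$ via \propref{p:contrcontain} and \propref{p:locfix}, regularity and the solution property from \propref{p:Psiprop} (you spell out the differentiation through \propref{p:dualderiv} that the paper leaves implicit), and uniqueness through \propref{p:fixedsoln} under H2/H3. The paper's own proof is four sentences and stops at uniqueness of the fixed point \emph{within} $E_M$; the stronger statement that no other bounded solution exists is handled separately by the Gronwall estimate of \propref{p:globuniq}.

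Your confinement step---forcing an arbitrary bounded solution back into $E_M$ by a maximal-interval argument---is a sound alternative to invoking \propref{p:globuniq}, and the strict inequality you need (the bound in \propref{p:contrcontain} is $<M$ for $t<\tau_M$) is indeed available. One small correction: you call this ``the same connectedness argument as in the proof of \propref{p:contrcontain}'', but that proof contains no connectedness argument; it is a direct one-line norm estimate. What you are really using is the \emph{strictness} of that estimate together with the $B^\star$-continuity you obtained from \propref{p:Psiprop}. This is a mislabel, not a gap.
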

\begin{proof}
By Proposition~\ref{p:locfix} there is precisely one fixed point of $\Psi_{c_0}$, which by
Proposition~\ref{p:Psiprop} is a solution of \eqref{eq:smol} with initial condition $c_0$.
Proposition~\ref{p:contrcontain} shows
that this solution is in $C_\mathrm{b}\left([0,t_M^\prime),\left(E_M,\norm{\cdot}_{B^\star}\right)\right)$.
By Proposition~\ref{p:fixedsoln} every solution of \eqref{eq:smol} with initial condition $c_0$
is a fixed point of $\Psi_{c_0}$ and thus is unique.  
\end{proof}

\begin{prop}\label{p:globuniq}
Let $T>0$, assume H1 and suppose $\Psi_{\mu_0}$ and $\Psi_{\nu_0}$ have fixed points
$\mu$ and $\nu$ respectively.  Write $M = \max\left(\opnorm{\mu}_{B^\star}, \opnorm{\nu}_{B^\star}\right)$,
then there exists $\CGro(M) > 0$ such that for $t \leq T$
\begin{equation*}
\norm{\mu_t-\nu_t}_{\Lpb^\star}
\leq
\norm{\mu_0-\nu_0}_{\Lpb^\star} e^{\frac32 K_\infty \Cho M \min(t,t_0)} e^{\CGro(M)t}
\end{equation*}
and thus at most one finite solution is possible for any given initial condition.
\end{prop}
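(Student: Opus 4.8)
The plan is a Duhamel identity followed by Gronwall's inequality. Since $\mu=\Psi_{\mu_0}(\mu)$ and $\nu=\Psi_{\nu_0}(\nu)$, Definition~\ref{d:Psi} gives
\begin{equation*}
\mu_t-\nu_t=\widetilde{A}^{t,0}[\mu](\mu_0-\nu_0)+\bigl(\widetilde{A}^{t,0}[\mu]-\widetilde{A}^{t,0}[\nu]\bigr)\nu_0+\int_0^t\bigl(\widetilde{A}^{t,s}[\mu]-\widetilde{A}^{t,s}[\nu]\bigr)I_s\,\dd s,
\end{equation*}
where the first two terms come from splitting $\widetilde{A}^{t,0}[\mu]\mu_0-\widetilde{A}^{t,0}[\nu]\nu_0$. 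Writing $\phi(t):=\norm{\mu_t-\nu_t}_{B^\star}$ --- a continuous function of $t$, because both fixed points lie in $C_\mathrm{b}\bigl([0,T),(\Mi,\norm{\cdot}_{B^\star})\bigr)$ by Proposition~\ref{p:Psiprop} --- I would bound the three terms separately using Proposition~\ref{p:Adual}.

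For the first term, $\norm{\widetilde{A}^{t,0}[\mu]}_{\Mi\rightarrow\Mi}\leq e^{\frac32 K_\infty\Cho M t}\II(t\leq t_0)$; since this vanishes for $t>t_0$, it is dominated by $e^{\frac32 K_\infty\Cho M\min(t,t_0)}$, so the first term is at most $a(t):=\norm{\mu_0-\nu_0}_{B^\star}\,e^{\frac32 K_\infty\Cho M\min(t,t_0)}$, which is non-decreasing in $t$. For the remaining two terms I would use the \emph{integral} form of the Lipschitz estimate in Proposition~\ref{p:Adual},
\begin{equation*}
\norm{\widetilde{A}^{t,s}[\mu]-\widetilde{A}^{t,s}[\nu]}_{\Mi\rightarrow\Mi}\leq\tfrac32 K_\infty\Cho\, e^{3K_\infty\Cho M(t-s)}\II(t-s\leq t_0)\int_s^t\phi(r)\,\dd r.
\end{equation*}
Because of the indicator, every exponential occurring is bounded by $e^{3K_\infty\Cho M t_0}$; moreover $\norm{\nu_0}_{B^\star}\leq\opnorm{\nu}_{B^\star}\leq M$ and $\sup_s\norm{I_s}_{B^\star}<\infty$ by I1. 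Bounding the $\nu_0$-term by a constant times $\int_0^t\phi(r)\,\dd r$, and for the inception term interchanging the $s$- and $r$-integrations (the indicator restricts the inner $s$-integral to an interval of length at most $t_0$), one arrives at
\begin{equation*}
\phi(t)\leq a(t)+\CGro(M)\int_0^t\phi(r)\,\dd r,\qquad t\leq T,
\end{equation*}
with, for instance, $\CGro(M)=\tfrac32 K_\infty\Cho\, e^{3K_\infty\Cho M t_0}\bigl(M+t_0\sup_s\norm{I_s}_{B^\star}\bigr)$, depending only on $M$ and the fixed data of the problem.

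Gronwall's inequality, using that $a$ is non-decreasing, then yields $\phi(t)\leq a(t)\,e^{\CGro(M)t}$, which is precisely the claimed estimate. Taking $\mu_0=\nu_0$ forces $\phi\equiv0$ on $[0,T)$, so $\Psi_{c_0}$ has at most one fixed point there; as $T$ was arbitrary and, under H2 or H3, every $B^\star$-bounded solution of \eqref{eq:smol} is such a fixed point by Proposition~\ref{p:fixedsoln}, \eqref{eq:smol} admits at most one $B^\star$-bounded solution for each initial condition. The only place needing real care is the bookkeeping that keeps the indicator $\II(t\leq t_0)$ attached to the initial-data term --- this is exactly what turns the naive exponent $t$ into $\min(t,t_0)$, encoding the finite residence time --- together with choosing the integral rather than the $\esssup$ form of the Lipschitz bound so that the resulting inequality is of genuine Volterra type; the remaining estimates are routine.
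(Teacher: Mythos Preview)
Your proof is correct and follows essentially the same approach as the paper: the same Duhamel splitting of $\mu_t-\nu_t$ into an initial-data contribution controlled by Proposition~\ref{p:Adual} and a Lipschitz contribution estimated via the integral bound, followed by Gronwall. The only cosmetic differences are that the paper applies $\widetilde{A}^{t,0}[\nu]$ rather than $\widetilde{A}^{t,0}[\mu]$ to $\mu_0-\nu_0$ (and correspondingly keeps $\mu_0$ rather than $\nu_0$ in the Lipschitz term), and obtains a slightly different constant $\CGro(M)$; your explicit Fubini argument for the inception integral and your remark that the passage from fixed-point uniqueness to solution uniqueness invokes Proposition~\ref{p:fixedsoln} (hence H2 or H3) are details the paper leaves implicit.
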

\begin{proof}
Since any solution must be a fixed point of $\Psi$ for the appropriate initial condition
\begin{multline}
\norm{\mu_t-\nu_t}_{\Lpb^\star} 
=
\norm{\Psi_{\mu_0}(\mu)(t)-\Psi_{\nu_0}(\nu)(t)}_{\Lpb^\star} \\
\leq 
\norm{\Psi_{\mu_0}(\mu)(t)-\Psi_{\mu_0}(\nu)(t)}_{\Lpb^\star}
+\norm{\Psi_{\mu_0}(\nu)(t)-\Psi_{\nu_0}(\nu)(t)}_{\Lpb^\star}.
\end{multline}
Now by Proposition~\ref{p:Adual} estimate the second term as follows
\begin{multline}
\norm{\Psi_{\mu_0}(\nu)(t)-\Psi_{\nu_0}(\nu)(t)}_{\Lpb^\star}
=
\norm{\widetilde{A}^{t,0}[\nu]\left(\mu_0-\nu_0\right)}_{\Lpb^\star}
\leq
e^{\frac32 K_\infty \Cho M t} \II\left(t\leq t_0\right) \norm{\mu_0-\nu_0}_{\Lpb^\star}.
\end{multline}
For the first term using Proposition~\ref{p:Adual} one finds
\begin{multline}
\norm{\Psi_{\mu_0}(\mu)(t) -\Psi_{\mu_0}(\nu)(t)}_{\Lpb^\star} \\
\leq
   \norm{\widetilde{A}^{t,0}[\mu]-\widetilde{A}^{t,0}[\nu]}_{B\rightarrow B}
   \norm{\mu_0}_{B^\star}\II\left(t\leq t_0\right)\\
 +\int_0^t    \norm{\widetilde{A}^{t,s}[\mu]-\widetilde{A}^{t,s}[\nu]}_{B\rightarrow B}
   \norm{I_s}_{B^\star}\II\left(t-s\leq t_0\right)\dd s\\
\leq
\frac32 K_\infty \Cho e^{3 K_\infty \Cho Mt} \norm{\mu_0}_{\Lpb^\star}
  \II(t\leq t_0) \int_{r\in[0,T)}  \norm{\mu_r - \nu_r}_{B^\star}\dd r \\
+\frac32 K_\infty \Cho e^{3 K_\infty \Cho M\min(t,t_0)} \min(t,t_0)
   \sup_r \norm{I_r}_{\Lpb^\star} \int_{r\in[0,T)} \norm{\mu_r - \nu_r}_{B^\star}.
\end{multline}
so using Gronwall with
\begin{equation}
\CGro(M)=
\frac32K_\infty \Cho e^{\frac32K_\infty \Cho M t_0}
\left(\norm{c_0^1}_{\Lpb^\star}+ t_0  \sup_r \norm{I_r}_{\Lpb^\star}\right)
\end{equation}
one has
\begin{equation}
\norm{\mu_t-\nu_t}_{\Lpb^\star}
\leq
\norm{\mu_0-\nu_0}_{\Lpb^\star} e^{\frac32 K_\infty \Cho M \min(t,t_0)} e^{\CGro(M)t}.
\end{equation}

\end{proof}

\begin{proof}[Proof of Theorem \ref{t:locexist}]
The existence of a solution on a small time interval is the conclusion of Proposition~\ref{p:locexist},
this procedure may be iterated, but the time steps may decay so that a solution cannot necessarily
be constructed for all time.

Proposition~\ref{p:fixedsoln} establishes a representation for any solutions, should they exist.  Using
this representation boundedness and continuity in the $B^\star$-norm along with differentiability 
in the $\Wopb^\star$-norm were established in Proposition~\ref{p:Psiprop}.

For compact subsets of the time interval on which a solution exists (which may be longer than
the time interval for which this theorem proves existence), $\Lpb^\star$ Lipschitz continuity in the
initial conditions and uniqueness are consequences of Proposition~\ref{p:globuniq}. 
\end{proof}

\subsection{Positive Measures}
Write $\Lpb^+$ for the cone of non-negative functions in \Lpb and $\Mi^+,\Moi^+$ for the cone of
non-negative measures in \Mi, respectively \Moi.  These cones are of course not Banach spaces,
but one would expect the physical solutions of any reaction--transport problem to remain in
$\Mi^+$, if they start there.  This is indeed the case and turns out to allow the local existence result for the
coagulation--transport problem studied here to be extended to a global one, which along with the
results already established makes the problem well posed.

\begin{prop}\label{p:pospres}
Let $T>0$ and $\mu\in L^\infty\left([0,T),\left(\Mi^+,\norm{\cdot}_{B^\star}\right)\right)$, then for
$0\leq s\leq t <T$ $A^{s,t}[\mu]$ is a positivity preserving on \Lpb, the same is true of
$\widetilde{A}^{t,s}[\mu]$ on \Mi and both operators are contractions on the respective positive
cones, that is
\begin{equation*}
\normoi{A^{s,t}[\mu]f}  \leq \normoi{f},\ f\in B^+, \qquad
\norm{\widetilde{A}^{t,s}[\mu]\nu}_{\Lpb^\star} \leq \norm{\nu}_{\Lpb^\star},\ \nu\in\Mi^+.
\end{equation*}

\end{prop}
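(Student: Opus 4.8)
The plan is to remove the sign‑indefinite loss term from the coagulation generator by an exponential tilt and then re‑sum the Dyson series of Definition~\ref{d:Asprop} into one all of whose summands are visibly positivity preserving. Split $H_t[\mu]=H_t^+[\mu]-\beta_t[\mu]$, where the gain operator
\begin{equation*}
H_t^+[\mu]f(x,y)=\tfrac12\int_{\sX\times\sY}h(x,x_2)f(x,y+y_2)K(y,y_2)\mu_t(\dd x_2,\dd y_2)
\end{equation*}
is positivity preserving on \Lpb, because $h,K\geq0$ and $\mu_t\geq0$, and where
$\beta_t[\mu](x,y)=\tfrac12\int_{\sX\times\sY}\bigl(h(x,x_2)+h(x_2,x)\bigr)K(y,y_2)\mu_t(\dd x_2,\dd y_2)\geq0$
is a non‑negative bounded function, with $\normoi{\beta_t[\mu]}\leq K_\infty\Cho\norm{\mu_t}_{\Lpb^\star}$ under H1, acting as a multiplication operator. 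Introduce the killed transport propagator generated by $U_t-\beta_t[\mu]$,
\begin{equation*}
V^{r,t}[\mu]f(x,y)=\exp\Bigl(-\int_r^t\beta_u[\mu]\bigl(\Phi_{r,u}(x),y\bigr)\,\dd u\Bigr)\bigl(U^{r,t}f\bigr)(x,y),
\end{equation*}
a product of non‑negative factors with $U^{r,t}f$; hence $V^{r,t}[\mu]$ is positivity preserving, and since the exponential is $\leq1$ and $\norm{U^{r,t}}_{\Lpb\rightarrow\Lpb}\leq1$ it is a contraction on \Lpb.

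I would then record the interaction‑picture identity obtained by inserting $H_{s_i}=H^+_{s_i}-\beta_{s_i}$ into Definition~\ref{d:Asprop}, expanding the products, and absorbing the $\beta$‑contributions into the exponential factors of $V$:
\begin{multline*}
A^{r,t}[\mu]=V^{r,t}[\mu]\\
+\sum_{m=1}^\infty\int_{r\leq s_1\leq\dotsc\leq s_m\leq t}V^{r,s_1}[\mu]\,H^+_{s_1}[\mu]\dotsm V^{s_{m-1},s_m}[\mu]\,H^+_{s_m}[\mu]\,V^{s_m,t}[\mu]\,\dd s_1\dotsm\dd s_m.
\end{multline*}
Every multiple series involved converges absolutely in $\norm{\cdot}_{\Lpb\rightarrow\Lpb}$ by the estimates already underlying Proposition~\ref{p:zero} (together with the fact that $H^+_s[\mu]$ and multiplication by $\beta_s[\mu]$ both have $\Lpb\rightarrow\Lpb$ norm at most $K_\infty\Cho\opnorm{\mu}_{B^\star}$), so the rearrangement is legitimate. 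Since each summand on the right is an integral of compositions of positivity‑preserving operators, $A^{r,t}[\mu]$ preserves $\Lpb^+$; passing to the pre‑dual, $\left<f,\widetilde{A}^{t,s}[\mu]\nu\right>=\left<A^{s,t}[\mu]f,\nu\right>\geq0$ for every $f\in\Lpb^+$ and $\nu\in\Mi^+$, so $\widetilde{A}^{t,s}[\mu]$ preserves $\Mi^+$.

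For the contraction bounds it is enough to show $A^{r,t}[\mu]\II\leq\II$ pointwise. Let $A^{r,t}_N[\mu]$ be the series above truncated at order $N$, so that for $f\in\Lpb^+$ the non‑negative functions $A^{r,t}_N[\mu]f$ increase to $A^{r,t}[\mu]f$ and, by Fubini, $A^{r,t}_{N+1}[\mu]=V^{r,t}[\mu]+\int_r^t V^{r,s}[\mu]H^+_s[\mu]A^{s,t}_N[\mu]\,\dd s$. I would prove $A^{r,t}_N[\mu]\II\leq\II$ by induction on $N$: the case $N=0$ is $V^{r,t}[\mu]\II\leq\II$; for the step, $0\leq A^{s,t}_N[\mu]\II\leq\II$ together with monotonicity of $H^+_s[\mu]$ give $H^+_s[\mu]\bigl(A^{s,t}_N[\mu]\II\bigr)\leq H^+_s[\mu]\II\leq\beta_s[\mu]$ pointwise, the last inequality since $h(x_2,x)\geq0$, so that $A^{r,t}_{N+1}[\mu]\II\leq V^{r,t}[\mu]\II+\int_r^t V^{r,s}[\mu]\beta_s[\mu]\,\dd s$. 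The latter is $\leq\II$ by the elementary identity $\deriv{s}\bigl(V^{r,s}[\mu]\II\bigr)(x,y)=-\bigl(V^{r,s}[\mu]\beta_s[\mu]\bigr)(x,y)$, valid for $(x,y)$ fixed and $s$ in the interval on which $\Phi_{r,\cdot}(x)$ remains in \sX, which integrates to $\II$ along trajectories still inside \sX at time $t$ and to something smaller otherwise. Hence $A^{r,t}[\mu]\II\leq\II$, so for $f\in\Lpb^+$ one gets $0\leq A^{r,t}[\mu]f\leq\normoi{f}\,A^{r,t}[\mu]\II\leq\normoi{f}\,\II$, i.e.\ $\normoi{A^{r,t}[\mu]f}\leq\normoi{f}$; and for $\nu\in\Mi^+$, since $\widetilde{A}^{t,s}[\mu]\nu$ is a non‑negative measure, $\norm{\widetilde{A}^{t,s}[\mu]\nu}_{\Lpb^\star}=\left<\II,\widetilde{A}^{t,s}[\mu]\nu\right>=\left<A^{s,t}[\mu]\II,\nu\right>\leq\left<\II,\nu\right>=\norm{\nu}_{\Lpb^\star}$.

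The step I expect to demand the most care is the interaction‑picture identity itself, i.e.\ that the $\beta$‑terms of Definition~\ref{d:Asprop} genuinely re‑sum into the exponential factors of $V$; this rests on the absolute convergence of the underlying multiple series, after which the rest is bookkeeping. A maximum‑principle argument applied directly to the backward equation $\deriv{r}A^{r,t}[\mu]f=-A_r[\mu]A^{r,t}[\mu]f$ would bypass the re‑summation, but the complete absence of regularity on \sY makes that route awkward, so the tilt‑and‑re‑sum argument seems preferable.
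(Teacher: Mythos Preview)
Your argument is correct and complete, but it takes a genuinely different route from the paper's.

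The paper proceeds by a Trotter--Lie splitting: it observes that $U^{s,t}$ is a positivity preserving contraction on \Lpb, asserts that the bounded generator $H_t[\mu]$ itself generates a positivity preserving propagator $H^{s,t}[\mu]$ of norm at most $1$ on $\Lpb^+$, and then approximates $A^{s,t}[\mu]$ by the alternating product
\[
U^{t_{m-1},t}H^{t_{m-1},t}[\mu]\dotsm U^{s,t_1}H^{s,t_1}[\mu],\qquad t_i=s+i\,\frac{t-s}{m},
\]
letting $m\to\infty$. Each factor is a positivity preserving contraction, so the limit is too.

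Your approach instead uses an interaction-picture (Feynman--Kac) tilt: you strip the sign-indefinite loss $\beta_t[\mu]$ out of $H_t[\mu]$, fold it into the transport as the killed propagator $V^{r,t}[\mu]$, and re-expand the Dyson series with building blocks $V$ and the manifestly positive gain $H_t^+[\mu]$. The contraction then follows from the pointwise bound $A^{r,t}[\mu]\II\leq\II$, which you obtain by an explicit induction and the inequality $H_s^+[\mu]\II\leq\beta_s[\mu]$.

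What each buys: the paper's version is very short, but it leans on two things the reader must supply---convergence of the Trotter product to $A^{s,t}[\mu]$, and the fact that $H^{s,t}[\mu]$ is positivity preserving on $\Lpb^+$, which in practice is proved by exactly the gain/loss split you make. Your version is longer but self-contained: positivity is visible term-by-term, and the contraction is reduced to an elementary identity for the exponential factor along a single trajectory. The one place your write-up needs care, as you note, is the re-summation identity; since both series converge absolutely in $\norm{\cdot}_{\Lpb\rightarrow\Lpb}$ by the same estimates as Proposition~\ref{p:zero}, and since the $V$-series satisfies the same forward equation as $A^{r,t}[\mu]$ on \Wopb (Proposition~\ref{p:ApropB}), the identification is justified.
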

\begin{proof}
A proof for the dual propagators on \Lpb suffices.  For this note that $U^{s,t}$ is positivity preserving
with \Lpb-operator norm 1.  One further checks that $H_t[\mu]$ generates a positivity preserving
propagator with operator norm at most 1 on $\Lpb^+$, which will be denoted $H^{s,t}[\mu]$.
One can now approximate $A^{s,t}[\mu]$ by
\begin{equation}
U^{t,t_{m-1}}H^{t,t_{m-1}}[\mu] \dotsm U^{t_1,t_2}H^{t_1,t_2}[\mu] U^{s,t_1}H^{s,t_1}[\mu],
\quad
t_i = s + i\frac{t-s}{m},\ i=1,\dotsc m-1,\ m\in\NN
\end{equation}
which is a splitting, to see positivity is preserved and the operator norm is bounded above by 1.
\end{proof}

The key estimate from Proposition~\ref{p:Psiprop} can now be improved (recall $t_0$ is the
maximum particle residence time from \S\ref{s:flowass}):
\begin{prop}\label{p:Psiproppos}
Assume H1 holds, $c_0 \in \Mi^+$ and $\mu \in L^\infty\left([0,T),\Mi^+\right)$ for
$T\in[0,\infty)$ then
\begin{equation*}
\norm{\Psi_{c_0}(\mu)(t)}_{B^\star}
\leq
\norm{c_0}_{B^\star} \II\left(t\leq t_0\right) + \min(t,t_0) \sup_s\norm{I_s}_{B^\star}.
\end{equation*}
\end{prop}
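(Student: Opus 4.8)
The plan is to estimate the two terms of $\Psi_{c_0}(\mu)(t)=\widetilde{A}^{t,0}[\mu]c_0+\int_0^t\widetilde{A}^{t,s}[\mu]I_s\,\dd s$ (Definition~\ref{d:Psi}) separately, the improvement over Proposition~\ref{p:Psiprop} coming entirely from replacing the generic operator-norm bound of Proposition~\ref{p:Adual} by the contraction property on the positive cone furnished by Proposition~\ref{p:pospres}. For the first term, since $c_0\in\Mi^+$ and $\mu$ is $\Mi^+$-valued, Proposition~\ref{p:pospres} gives $\norm{\widetilde{A}^{t,0}[\mu]c_0}_{B^\star}\le\norm{c_0}_{B^\star}$; combining this with the fact that $\widetilde{A}^{t,0}[\mu]$ is the zero operator once $t>t_0$ (the $\II(t-s\le t_0)$ factor in Proposition~\ref{p:Adual}, ultimately the residence-time assumption of \S\ref{s:flowass}) yields $\norm{\widetilde{A}^{t,0}[\mu]c_0}_{B^\star}\le\norm{c_0}_{B^\star}\II(t\le t_0)$.

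The second term requires a small detour because under H1 the inception measures $I_s$ are only signed, whereas Proposition~\ref{p:pospres} speaks of the positive cone. I would pass to the Jordan decomposition $I_s=I_s^+-I_s^-$ into non-negative measures. As $\widetilde{A}^{t,s}[\mu]$ is positivity preserving, $\widetilde{A}^{t,s}[\mu]I_s^{\pm}\in\Mi^+$, so by the triangle inequality and the cone contraction
\[
\norm{\widetilde{A}^{t,s}[\mu]I_s}_{B^\star}\le\norm{\widetilde{A}^{t,s}[\mu]I_s^+}_{B^\star}+\norm{\widetilde{A}^{t,s}[\mu]I_s^-}_{B^\star}\le\norm{I_s^+}_{B^\star}+\norm{I_s^-}_{B^\star}=\norm{I_s}_{B^\star},
\]
the last equality being additivity of total variation along the Jordan decomposition. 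Since $\widetilde{A}^{t,s}[\mu]$ vanishes whenever $t-s>t_0$, the integrand is supported on $s\in[\max(0,t-t_0),t]$, an interval of length $\min(t,t_0)$, and replacing $\norm{I_s}_{B^\star}$ there by $\sup_s\norm{I_s}_{B^\star}$, together with subadditivity of the integral, gives $\norm{\int_0^t\widetilde{A}^{t,s}[\mu]I_s\,\dd s}_{B^\star}\le\min(t,t_0)\sup_s\norm{I_s}_{B^\star}$.

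Adding the two bounds with the triangle inequality gives the assertion. I do not expect any genuine obstacle: the whole content is Proposition~\ref{p:pospres}, and the only point meriting care is that passing from the generic exponentially growing bound to the contraction bound forces one to treat the positive and negative parts of $I_s$ separately, which is harmless precisely because the total variation norm is additive on the Jordan decomposition.
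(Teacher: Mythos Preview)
Your argument is correct and is exactly the approach the paper takes: the paper's proof is a one-line citation of Definition~\ref{d:Psi} and the cone-contraction estimates of Proposition~\ref{p:pospres}, and you have simply filled in the details. Your Jordan-decomposition step for $I_s$ is a sensible addition, since the proposition as stated does not assume $I_s\in\Mi^+$ (only $c_0$ and $\mu$ are); this extends the contraction on $\Mi^+$ to all of $\Mi$ and closes a small gap the paper leaves implicit.
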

\begin{proof}
This follows from Definition~\ref{d:Psi}, and the norm estimates in Proposition~\ref{p:pospres}.
\end{proof}

\begin{proof}[Proof of Theorem \ref{t:globexist}]
One can take $M=\norm{c_0}_{B^\star}+ t_0 \sup_s\norm{I_s}_{B^\star}$ and $t_M = \infty$
in Proposition~\ref{p:contrcontain}.  Proposition~\ref{p:locfix}
then extends to show that $\Psi_{c_0}$ is a contraction on
$L^\infty\left([0,\infty), E_M\cap \Mi^+\right)$.
\end{proof}

\subsection{Measures with Lebesgue Densities}\label{s:dens}
One would of course like to prove that every measure valued to solution to \eqref{eq:smol}
is in fact also a strong solution to an appropriate extension of \eqref{e:smolold}.  The main difficulty
that has to be addressed in this section is the inflow of pre-existing particles through
$\Gamma_\mathrm{in}$  which leads to $I_t$ having a singular (with respect to Lebesgue measure
on \sX) part concentrated on $\Gamma_\mathrm{in}$.  In this section it is
shown that under a mild time-regularity condition (I2) the advective transport smooths out
the inception concentrated on $\Gamma_\mathrm{in}$ sufficiently for solutions to \eqref{eq:smol}
to remain in \Moi.  Shocks are of course preserved by advective transport, but what happens here
is more like spraying paint onto a moving surface, as long as the surface keeps moving a thin layer of paint
is deposited everywhere and no ridge (shock) is created. 

\begin{prop}\label{p:Adualrep}
Assume H1 holds, $0\leq s\leq t < T$, $\mu \in L^\infty\left([0,T),\left(\Mi,\norm{\cdot}_{B^\star}\right)\right)$, then
\begin{equation*}
\widetilde{A}^{t,s}[\mu] = \widetilde{U}^{t,s} + \sum_{m=1}^\infty
\int_{r\leq s_1 \leq \dotsc \leq s_m\leq t}
 \widetilde{U}^{t,s_m}\widetilde{H}_{s_m}[\mu] \widetilde{U}^{s_m,s_{m-1}} \dotsm 
  \widetilde{H}_{s_1}[\mu] \widetilde{U}^{r,s_1}
 \dd s_1 \dotsm \dd s_m.
\end{equation*}
\end{prop}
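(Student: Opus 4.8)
The plan is to obtain the series representation for $\widetilde{A}^{t,s}[\mu]$ simply by dualising the defining series for $A^{s,t}[\mu]$ in Definition~\ref{d:Asprop} term by term. The two ingredients that make this legitimate are already available: first, by Proposition~\ref{p:ApropB} (and Proposition~\ref{p:zero}) the series defining $A^{s,t}[\mu]$ converges absolutely in $B\rightarrow B$ operator norm, uniformly on compact time intervals; second, under the duality pairing $\left<f,\mu\right>=\int_{\sX\times\sY}f\mu(\dd x,\dd y)$ a bounded operator $L\colon B\rightarrow B$ has a pre-dual $\widetilde{L}\colon\Mi\rightarrow\Mi$ with $\norm{\widetilde{L}}_{\Mi\rightarrow\Mi}=\norm{L}_{B\rightarrow B}$, as recorded before Definition~\ref{d:dualops}. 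Hence the map $L\mapsto\widetilde{L}$ is a linear isometry on the relevant operator spaces, and an absolutely convergent operator series is taken to an absolutely convergent operator series, term by term.

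First I would record that duality reverses composition: if $L=L_1L_2$ then $\left<L_1L_2 f,\mu\right>=\left<L_2 f,\widetilde{L}_1\mu\right>=\left<f,\widetilde{L}_2\widetilde{L}_1\mu\right>$, so $\widetilde{L}=\widetilde{L}_2\widetilde{L}_1$. Applying this to the generic $m$-fold product
\[
U^{r,s_1}H_{s_1}[\mu]\, U^{s_1,s_2} H_{s_2}[\mu]\dotsm U^{s_{m-1},s_m} H_{s_m}[\mu]\, U^{s_m,t}
\]
appearing in Definition~\ref{d:Asprop} (with $r$ there playing the role of $s$ here) gives the pre-dual
\[
\widetilde{U}^{t,s_m}\widetilde{H}_{s_m}[\mu]\,\widetilde{U}^{s_m,s_{m-1}}\dotsm\widetilde{H}_{s_1}[\mu]\,\widetilde{U}^{s_1,r},
\]
using the notation of Definition~\ref{d:dualops} (the tilde operators carry reversed time indices, and $\widetilde{U}$ of $U^{a,b}$ is $\widetilde{U}^{b,a}$). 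Next I would note that pre-dualisation commutes with the Bochner integral over the simplex $r\leq s_1\leq\dotsc\leq s_m\leq t$: the integrand is norm-measurable and norm-bounded by Proposition~\ref{p:zero}, so $\widetilde{\int(\cdot)}=\int\widetilde{(\cdot)}$, and likewise pre-dualisation commutes with the norm-convergent sum over $m$ by the isometry property. Assembling these three facts — sum, integral, product all pass through the tilde — yields exactly the claimed formula, with the $m=0$ term $U^{r,t}$ dualising to $\widetilde{U}^{t,s}$ (writing $s$ for the index called $r$ in Definition~\ref{d:Asprop}).

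There is no substantial obstacle here: the statement is essentially a bookkeeping exercise in duality, and the only point requiring the slightest care is matching the index names — the excerpt uses both $r$ and $s$ for the left endpoint in different displays, and one must check that in Proposition~\ref{p:Adualrep} the symbol $r$ appearing in the integration limits and in $\widetilde{U}^{r,s_1}$ is meant to be the $s$ of the statement, so that the composed operator indeed maps $\Mi$ at time $s$ to $\Mi$ at time $t$. Once the convention is fixed, the proof is one line invoking the absolute convergence from Proposition~\ref{p:ApropB}, the isometry $\norm{\widetilde{L}}=\norm{L}$, and the anti-homomorphism property of dualisation. A short remark could be added that the resulting series is itself absolutely convergent in $\Mi\rightarrow\Mi$, with the same bound $e^{\frac32 K_\infty\Cho\opnorm{\mu}_{B^\star}(t-s)}\II(t-s\leq t_0)$ as in Proposition~\ref{p:Adual}, which both justifies all the interchanges a posteriori and identifies the sum with $\widetilde{A}^{t,s}[\mu]$ as defined in Definition~\ref{d:dualops}.
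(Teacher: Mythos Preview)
Your proposal is correct and follows exactly the paper's approach: the paper's own proof is the single sentence ``For each $m$ the term in the sum here is dual to the term with the same $m$ in Definition~\ref{d:Asprop},'' and your write-up is a careful elaboration of precisely that idea (anti-homomorphism of dualisation, isometry, interchange with the Bochner integral and the sum). Your observation about the index mismatch---that the $r$ in the integration limits and in $\widetilde{U}^{r,s_1}$ should be read as the $s$ of the hypothesis---is also correct and worth flagging.
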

\begin{proof}
For each $m$ the term in the sum here is dual to the term with the same $m$ in Definition~\ref{d:Asprop}.
\end{proof}

\begin{prop}\label{p:dualseries}
Assume H1 holds, $0\leq s\leq t < T$, $\mu \in L^\infty\left([0,T),\left(\Mi,\norm{\cdot}_{B^\star}\right)\right)$ and $c\in\Moi$, then for any $\phi \in \mathcal{B}_\mathbf{b}(\sY)$ and bounded measurable $f\colon \sX\rightarrow\RR$
\begin{equation*}
\int_\sX f(x)\int_\sY \phi(y)\widetilde{U}^{t,s}c(x,\dd y)\dd x
=\int_\sX f(x)e^{-\int_s^t \nabla \cdot u_r \left(\Phi_{t,r}(x)\right)\dd r}
  \int_\sY \phi(y)c\left(\Phi_{t,s}(x),\dd y\right)\dd x
\end{equation*}
and
\begin{multline*}
\int_\sY \phi(y)\widetilde{H}_t[\mu]c(x,\dd y)
=
\int_\sY \frac12 \int_{\sX\times\sY}\phi(y+y_2)h(x,x_2)K(y,y_2)\mu_t(\dd x_2,\dd y_2) c(x,\dd y)\\
-\int_\sY \frac12 \int_{\sX\times\sY}\phi(y)\left[h(x,x_2)+h(x_2,x)\right]K(y,y_2)\mu_t(\dd x_2,\dd y_2) c(x,\dd y).
\end{multline*}
\end{prop}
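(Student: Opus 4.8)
The plan is to read off both identities from the duality pairing $\left<f,\mu\right>=\int_{\sX\times\sY}f\,\mu(\dd x,\dd y)$ that defines the pre-dual operators in Definition~\ref{d:dualops}, by evaluating against product test functions $f(x)\phi(y)$ with $f\colon\sX\rightarrow\RR$ bounded measurable and $\phi\in\mathcal{B}_\mathrm{b}(\sY)$; since such products separate finite measures on $\sX\times\sY$ whose $\sX$-marginal is absolutely continuous, establishing the two formulas on this class proves them as stated. For $\widetilde{U}^{t,s}$ the computation reduces to a change of variables along the flow $\Phi$ together with Liouville's formula for its Jacobian, and for $\widetilde{H}_t[\mu]$ it is an application of Fubini's theorem to Definition~\ref{d:Htstar}.

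For the transport identity I would start from Definition~\ref{d:tprop}, which gives
\begin{equation*}
\left<f\otimes\phi,\widetilde{U}^{t,s}c\right>=\left<U^{s,t}(f\otimes\phi),c\right>
=\int_\sX f\left(\Phi_{s,t}(x)\right)\II\left(\Phi_{s,t}(x)\in\sX\right)\left(\int_\sY\phi(y)\,c(x,\dd y)\right)\dd x,
\end{equation*}
the inner integral being a bounded measurable function of $x$ because $c\in\Moi$. I would then substitute $x=\Phi_{t,s}(x')$: as $u\in C\left(\RRP,C^2(\overline{\sX},\RR^d)\right)$ the flow is $C^1$ in space, and under the assumptions of \S\ref{s:flowass} and Appendix~\ref{s:flow} the map $\Phi_{s,t}$ restricts to a $C^1$ diffeomorphism from $\left\{x\in\sX\colon\Phi_{s,t}(x)\in\sX\right\}$ onto $\left\{x'\in\sX\colon\Phi_{t,s}(x')\in\sX\right\}$ with inverse $\Phi_{t,s}$. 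Liouville's formula, obtained by solving $\deriv{r}\det\nabla\Phi_{t,r}(x')=(\nabla\cdot u_r)\left(\Phi_{t,r}(x')\right)\det\nabla\Phi_{t,r}(x')$ backwards from $\det\nabla\Phi_{t,t}(x')=1$, shows that the Jacobian of $x'\mapsto\Phi_{t,s}(x')$ equals $\exp\left(-\int_s^t(\nabla\cdot u_r)\left(\Phi_{t,r}(x')\right)\dd r\right)$. Performing the substitution and using the convention $c(z,\cdot)\equiv 0$ for $z\notin\sX$ to absorb the indicator yields the asserted right-hand side.

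For the coagulation identity, $\widetilde{H}_t[\mu]$ is the pre-dual of the operator of Definition~\ref{d:Htstar}, which on a product function reads
\begin{multline*}
H_t[\mu](f\otimes\phi)(x,y)=f(x)\left(\frac12\int_{\sX\times\sY}h(x,x_2)\phi(y+y_2)K(y,y_2)\mu_t(\dd x_2,\dd y_2)\right.\\
\left.-\frac12\int_{\sX\times\sY}\phi(y)\left[h(x,x_2)+h(x_2,x)\right]K(y,y_2)\mu_t(\dd x_2,\dd y_2)\right).
\end{multline*}
Pairing with $c(x,\dd y)\dd x$, pulling $f(x)$ out of the $\dd y$-integral, and invoking Fubini's theorem, legitimate because the integrand is dominated by $K_\infty\Cho\normi{\phi}\norm{\mu_t}_{B^\star}\normmoi{c}$ times the finite Lebesgue measure of $\sX$, rewrites $\left<f\otimes\phi,\widetilde{H}_t[\mu]c\right>$ as $\int_\sX f(x)\left(\int_\sY\phi(y)\,\widetilde{H}_t[\mu]c(x,\dd y)\right)\dd x$ with the inner integral equal to the claimed expression. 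The same domination shows this inner expression is, for fixed $\phi$, a bounded measurable function of $x$, so $\widetilde{H}_t[\mu]c$ is again absolutely continuous in the $\sX$-variable, which is what one needs to show solutions remain in \Moi.

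The Fubini estimates and the algebra of the substitution are routine. The step requiring care, and the one not covered by the whole-space treatment of \citep{Kolok10b}, is the change of variables in the first identity: one must know that $\Phi_{s,t}$ is a Jacobian-weighted bijection between the two truncated domains, i.e.\ that mass can enter or leave $\sX$ on $[s,t]$ only through $\Gamma_\mathrm{in}$ and $\Gamma_\mathrm{out}$ and that the level sets $\left\{\Phi_{s,t}(\cdot)\in\sX\right\}$ and $\left\{\Phi_{t,s}(\cdot)\in\sX\right\}$ are the correct domains of integration. This is precisely what the flow assumptions of \S\ref{s:flowass}---existence and uniqueness of $s(t,x)$ and $\xi(t,x)$, differentiability of $\xi$, and the manifold structure of $\Xi_t$---together with the derivative bound of Proposition~\ref{p:dxPhi2} are designed to provide, and I would invoke them here.
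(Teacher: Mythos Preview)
Your proposal is correct and follows essentially the same route as the paper: the transport identity is obtained from the duality $\left<U^{s,t}(f\otimes\phi),c\right>$ by the change of variable $x\leftrightarrow\Phi_{s,t}(x)$ with Liouville's formula supplying the Jacobian, and the coagulation identity is read off directly from Definition~\ref{d:Htstar} via Fubini, with the key observation that the resulting measure retains a Lebesgue density in \sX. The paper also mentions an alternative for the first part---approximating $c$ by \sX-differentiable densities and verifying the formula via Proposition~\ref{p:dsPhi}---but your change-of-variables argument is the primary one given there as well.
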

\begin{proof}
For the first statement, which concerns the transport propagator $U$, one makes the change of variable
$x \leftrightarrow \Phi_{s,t}(x)$.  Liouville's formula then gives the determinant of the Jacobian as
$\abs{\det \frac{\partial \Phi_{s,t}(x)}{\partial x}} = \exp{\int_s^t \nabla \cdot u_t\left(\Phi_{t,r}(x)\right)\dd r}$.
Alternatively one can approximate $c$ by \sX-differentiable functions (since the claim is only of an $L^1$
nature) and check the formula directly using Proposition~\ref{p:dsPhi}.

For $H_t$ use Definition~\ref{d:Htstar}; the important point is that the new measure also has a
density with respect to Lebesgue measure on \sX.
\end{proof}

\begin{prop}\label{p:densprop}
Assume H1 holds, $0\leq s\leq t < T$, $\mu \in L^\infty\left([0,T),\left(\Mi,\norm{\cdot}_{B^\star}\right)\right)$ then $\widetilde{A}^{t,s}[\mu]$ is
a bounded propagator on \Moi with
\begin{equation*}
\norm{\widetilde{A}^{t,s}[\mu]}_{\Moi \rightarrow \Moi}
\leq
e^{\left(\norm{\nabla\cdot u}_\infty +\frac32 K_\infty \Cho \opnorm{\mu}_{B^\star}\right) (t-s)}\II(t-s\leq t_0).
\end{equation*}
\end{prop}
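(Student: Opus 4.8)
The plan is to feed the perturbation series for $\widetilde{A}^{t,s}[\mu]$ from Proposition~\ref{p:Adualrep} through the explicit action formulas of Proposition~\ref{p:dualseries}, which were set up precisely for this: they display both $\widetilde{U}^{t,s}$ and $\widetilde{H}_t[\mu]$ as operators that send a measure of the form $c(x,\dd y)\,\dd x$ to another measure of the same form. First I would dispose of the trivial range $t-s>t_0$: there $A^{s,t}[\mu]=0$ on \Lpb (already contained in Proposition~\ref{p:ApropB}, ultimately because $U^{s,t}=0$ by the residence-time bound and Proposition~\ref{p:Ubound}), so its pre-dual $\widetilde{A}^{t,s}[\mu]$ vanishes on \Mi, hence on \Moi, and the asserted inequality holds with both sides zero. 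So assume $t-s\le t_0$ from now on.

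Next I would record two building-block estimates on \Moi. For the transport part, the first identity in Proposition~\ref{p:dualseries} shows that the density of $\widetilde{U}^{t,s}c$ at $x$ equals $e^{-\int_s^t \nabla\cdot u_r(\Phi_{t,r}(x))\,\dd r}\,c(\Phi_{t,s}(x),\cdot)$ on $\{x : \Phi_{t,s}(x)\in\sX\}$ and $0$ elsewhere; since the exponent is bounded by $\norm{\nabla\cdot u}_\infty(t-s)$ uniformly in $x$, and $\Phi_{t,s}$ is a $C^1$ diffeomorphism onto its image and hence preserves Lebesgue-null sets, taking the essential supremum over $x$ gives $\normmoi{\widetilde{U}^{t,s}c}\le e^{\norm{\nabla\cdot u}_\infty(t-s)}\normmoi{c}$. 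For the coagulation part, the second identity in Proposition~\ref{p:dualseries} expresses $\widetilde{H}_t[\mu]c(x,\cdot)$, for each fixed $x$, as a bounded linear image of the \sY-measure $c(x,\cdot)$; estimating the $h$-factors by \Cho via H1 and the kernel by $K_\infty$ exactly as in Proposition~\ref{p:Hbound} yields $\norm{\widetilde{H}_t[\mu]c(x,\cdot)}_{\sY-\mathrm{TV}}\le \frac32 K_\infty\Cho\norm{\mu_t}_{B^\star}\,\norm{c(x,\cdot)}_{\sY-\mathrm{TV}}$ for almost every $t$, uniformly in $x$, hence $\norm{\widetilde{H}_t[\mu]}_{\Moi\to\Moi}\le \frac32 K_\infty\Cho\opnorm{\mu}_{B^\star}$. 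The $x$-measurability of the new densities and the measurability of the integrand in the time variables are as in the proof of Proposition~\ref{p:dualseries} and cause no difficulty.

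Finally I would assemble the bound. In the $m$-th term of the series of Proposition~\ref{p:Adualrep} the chain alternates the transport factors $\widetilde{U}^{t,s_m}, \widetilde{U}^{s_m,s_{m-1}}, \dots, \widetilde{U}^{s_1,s}$ with the coagulation factors $\widetilde{H}_{s_m}[\mu],\dots,\widetilde{H}_{s_1}[\mu]$; every intermediate object lies in \Moi by the two estimates above, so the composition is bounded on \Moi by the product of the factor norms. The transport factor norms telescope, $e^{\norm{\nabla\cdot u}_\infty(t-s_m)}\cdots e^{\norm{\nabla\cdot u}_\infty(s_1-s)}=e^{\norm{\nabla\cdot u}_\infty(t-s)}$, independently of $s_1,\dots,s_m$, while the $m$ coagulation factors give $\bigl(\frac32 K_\infty\Cho\opnorm{\mu}_{B^\star}\bigr)^m$; integrating this constant over the simplex $\{s\le s_1\le\dots\le s_m\le t\}$ of volume $(t-s)^m/m!$ and summing over $m\ge 0$ gives $e^{\norm{\nabla\cdot u}_\infty(t-s)}\sum_{m\ge0}\frac1{m!}\bigl(\frac32 K_\infty\Cho\opnorm{\mu}_{B^\star}(t-s)\bigr)^m=e^{(\norm{\nabla\cdot u}_\infty+\frac32 K_\infty\Cho\opnorm{\mu}_{B^\star})(t-s)}$, the claimed estimate. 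Since this is absolute convergence in the \Moi-operator norm and \Moi is a Banach space, $\widetilde{A}^{t,s}[\mu]$ genuinely maps \Moi into \Moi; and because it coincides on \Mi with the propagator already constructed, the identity $\widetilde{A}^{t,r}[\mu]\widetilde{A}^{r,s}[\mu]=\widetilde{A}^{t,s}[\mu]$ restricts to \Moi, so $\widetilde{A}^{t,s}[\mu]$ is a bounded propagator there. I do not expect a serious obstacle: the only point needing a little care is that \Moi is genuinely preserved at each stage — that the diffeomorphism $\Phi_{t,s}$ does not spoil the essential-supremum structure and that $\widetilde{H}_t[\mu]$ returns an $x$-measurable \sY-measure-valued density — and this has essentially been arranged by Proposition~\ref{p:dualseries}.
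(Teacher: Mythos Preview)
Your proposal is correct and follows essentially the same route as the paper: extract the two building-block estimates $\norm{\widetilde{U}^{t,s}}_{\Moi\to\Moi}\le e^{\norm{\nabla\cdot u}_\infty(t-s)}$ and $\norm{\widetilde{H}_t[\mu]}_{\Moi\to\Moi}\le\frac32 K_\infty\Cho\opnorm{\mu}_{B^\star}$ from Proposition~\ref{p:dualseries}, then feed them through the perturbation series (Proposition~\ref{p:Adualrep}) exactly as in Proposition~\ref{p:ApropB}. Your write-up simply makes explicit the telescoping of the transport factors, the simplex integration, and the $t-s>t_0$ case that the paper leaves to the reader.
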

\begin{proof}
From Proposition~\ref{p:dualseries} one sees that
\begin{equation*}
\norm{\widetilde{U}^{t,s}}_{\Moi \rightarrow \Moi}
\leq
e^{\norm{\nabla\cdot u}_\infty (t-s)}
\end{equation*}
and
\begin{equation*}
\norm{\widetilde{H}_t[\mu]}_{\Moi \rightarrow \Moi}
\leq
\frac32 K_\infty \Cho \opnorm{\mu}_{B^\star}.
\end{equation*}
The proof now follows that of Proposition~\ref{p:ApropB}.
\end{proof}
The $\widetilde{U}$ and therefore also the $\widetilde{A}$ are not (norm-)continuous on \Moi.  This can easily
be seen by considering a small translation of a step function regarded as the density of a measure in \Moi.
The eventual time continuity of the solutions will depend on having some \sX-regularity for the densities of
the measures in \Moi.

The next proposition provides a better norm estimate when the propagator is restricted to positive
measures.  This is then used in Propositions~\ref{p:bdrydens2}\&\ref{p:bdrydens} to show that
inception concentrated on the inflow boundary does not take the solution out of \Moi.
\begin{prop}\label{p:posdensprop}
Assume H1 holds, $0\leq s\leq t < T$, $\mu \in L^\infty\left([0,T),\left(\Mi^+,\norm{\cdot}_{B^\star}\right)\right)$ and let $c \in \Moi$ be a positive measure, then
$\widetilde{A}^{t,s}[\mu] c$ is also a positive measure and
\begin{equation*}
 \norm{\widetilde{A}^{t,s}[\mu]c}_{\Moi} \leq
 e^{\norm{\nabla\cdot u}_\infty (t-s)}\II(t-s\leq t_0) \norm{c}_{\Moi}.
\end{equation*}
\end{prop}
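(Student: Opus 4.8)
The plan is to repeat the Trotter-type splitting argument already used in the proof of Proposition~\ref{p:pospres}, but now on the pre-dual (measure) side and with the $\Moi$-norm tracked throughout rather than just total variation. Since Proposition~\ref{p:densprop} already forces $\widetilde A^{t,s}[\mu]=0$ when $t-s>t_0$, I would assume $t-s\le t_0$ from the outset, so that the indicator in the claim is free. Two building blocks are needed. The transport block: from the explicit formula for $\widetilde U^{t,s}c$ in Proposition~\ref{p:dualseries} one sees that $\widetilde U^{t,s}$ sends positive densities to positive densities and that, for each $x$ with $\Phi_{t,s}(x)\in\sX$, $\norm{(\widetilde U^{t,s}c)(x,\cdot)}_{\sY-\mathrm{TV}}=e^{-\int_s^t\nabla\cdot u_r(\Phi_{t,r}(x))\,\dd r}\norm{c(\Phi_{t,s}(x),\cdot)}_{\sY-\mathrm{TV}}$ (and it vanishes for the other $x$), so $\norm{\widetilde U^{t,s}c}_{\Moi}\le e^{\norm{\nabla\cdot u}_\infty(t-s)}\norm{c}_{\Moi}$; crucially, under the flow identity $\widetilde U^{t,r}\widetilde U^{r,s}=\widetilde U^{t,s}$ the exponents $\int_s^r$ and $\int_r^t$ add, so composing many transport steps still only charges the total elapsed time. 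The coagulation block: because $\mu_r\in\Mi^+$ and $h,K\ge 0$, putting $\phi\equiv 1$ in the formula for $\widetilde H_r[\mu]c$ in Proposition~\ref{p:dualseries} gives, for each fixed $x$, $\int_\sY\widetilde H_r[\mu]c(x,\dd y)=-\tfrac12\int_\sY\int_{\sX\times\sY}h(x_2,x)K(y,y_2)\mu_r(\dd x_2,\dd y_2)c(x,\dd y)\le 0$, while the gain part of $\widetilde H_r[\mu]$ manifestly maps positive measures to positive measures; hence $\widetilde H_r[\mu]$ is of the form ``positive operator minus non-negative multiplication'', so the propagator $\widetilde H^{s,t}[\mu]$ it generates (the pre-dual of the $H^{s,t}[\mu]$ from the proof of Proposition~\ref{p:pospres}, which preserves $\Moi$ since $\widetilde H_r[\mu]$ does by Proposition~\ref{p:dualseries}) is positivity preserving and, acting pointwise in $x$, a $\sY$-TV contraction, giving $\norm{\widetilde H^{s,t}[\mu]c}_{\Moi}\le\norm{c}_{\Moi}$ for $c\in\Moi^+$.

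With these two facts, I would approximate $\widetilde A^{t,s}[\mu]$ exactly as in the proof of Proposition~\ref{p:pospres} by the splitting $P_m:=\widetilde U^{t,t_{m-1}}\widetilde H^{t,t_{m-1}}[\mu]\cdots\widetilde U^{t_1,t_2}\widetilde H^{t_1,t_2}[\mu]\,\widetilde U^{s,t_1}\widetilde H^{s,t_1}[\mu]$ with $t_i=s+i(t-s)/m$. Applied to $c\in\Moi^+$ each $\widetilde H$-factor keeps the iterate in $\Moi^+$ and multiplies its $\Moi$-norm by at most $1$, and each $\widetilde U^{t_i,t_{i+1}}$-factor keeps it in $\Moi^+$ and multiplies its $\Moi$-norm by at most $e^{\norm{\nabla\cdot u}_\infty(t_{i+1}-t_i)}$; telescoping the transport exponents gives $P_m c\in\Moi^+$ with $\norm{P_m c}_{\Moi}\le e^{\norm{\nabla\cdot u}_\infty(t-s)}\norm{c}_{\Moi}$, uniformly in $m$.

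The final step is to pass to the limit $m\to\infty$. As in Proposition~\ref{p:pospres}, the splitting converges to $\widetilde A^{t,s}[\mu]$ on the dual side, so $P_m c\to\widetilde A^{t,s}[\mu]c$ in the weak-$\star$ topology of $\Mi$ tested against $B$. Both properties of the $P_m c$ survive: testing against $f\in B^+$ gives $\left\langle f,\widetilde A^{t,s}[\mu]c\right\rangle\ge 0$, so the limit is a positive measure, and for every Borel $A\subset\sX$ and $\phi\in\mathcal{B}_\mathrm{b}(\sY)$ one has $\abs{\left\langle\II_A\otimes\phi,P_m c\right\rangle}\le\normi{\phi}\,\abs{A}\,\norm{P_m c}_{\Moi}$, whence in the limit $\abs{\left\langle\II_A\otimes\phi,\widetilde A^{t,s}[\mu]c\right\rangle}\le\normi{\phi}\,\abs{A}\,e^{\norm{\nabla\cdot u}_\infty(t-s)}\norm{c}_{\Moi}$; this says precisely that $\widetilde A^{t,s}[\mu]c$ has a density lying in $\Moi$ with $\Moi$-norm at most $e^{\norm{\nabla\cdot u}_\infty(t-s)}\norm{c}_{\Moi}$, which is the assertion.

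The hard part will be the limiting step: as noted just before the proposition, $\widetilde U$ and hence $\widetilde A^{t,s}[\mu]$ are not norm-continuous on $\Moi$, so one cannot simply take a limit in the $\Moi$-operator norm; the argument has to be run weak-$\star$ in $\Mi$, and the real content is the closedness under that convergence of the bounded positive ball of $\Moi$ --- which works because the test functions $\II_A\otimes\phi$ used to certify absolute continuity and the density bound do lie in $B$. Everything else is routine bookkeeping of the two factors along the splitting. Note the resulting estimate is strictly sharper than Proposition~\ref{p:densprop}: the $\tfrac32 K_\infty\Cho\opnorm{\mu}_{B^\star}$ growth has disappeared, which is exactly the cancellation between the gain and loss terms of $\widetilde H$ made visible on the positive cone.
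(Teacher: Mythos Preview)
Your proposal is correct and follows essentially the same approach as the paper: both identify the two building blocks (the transport propagator with its $e^{\norm{\nabla\cdot u}_\infty(t-s)}$ bound on $\Moi$, and the coagulation propagator as a positivity-preserving $\Moi$-contraction because coagulation decreases $c(x,\sY)=\norm{c(x,\cdot)}_{\sY-\mathrm{TV}}$ pointwise in $x$) and then invoke the splitting approximation from Proposition~\ref{p:pospres}. Your explicit weak-$\star$ passage to the limit, together with the observation that the test functions $\II_A\otimes\phi\in\Lpb$ suffice to certify absolute continuity and the density bound, spells out a step the paper leaves implicit; one minor slip is that in writing $P_m$ on the measure side you should reverse the order of composition and the time indices relative to the function-side splitting in Proposition~\ref{p:pospres}.
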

\begin{proof}
Since $\Moi \subset \Mi$ preservation of positivity is a consequence of Proposition~\ref{p:pospres}
and Proposition~\ref{p:dualseries} states the \Moi is preserved.  To proceed note that the propagator
generated by $\widetilde{H}_t[\mu]$ preserves $\Moi^+$ not just \Moi and coagulation reduces
$c(x,\sY)=\norm{c(x,\cdot)}_{\sY-\mathrm{TV}}$ for all $x\in\sX$.  Secondly $\widetilde{U}^{t,s}$ is
positivity preserving and $\norm{\widetilde{U}^{t,s}}_{\Moi} \leq e^{\norm{\nabla \cdot u}(t-s)}$
using the representation from Proposition~\ref{p:dualseries} so the result now follows by the same
splitting approximation as in the proof of Proposition~\ref{p:pospres}.

\end{proof}

\begin{prop}\label{p:bdrydens2}
Let $T>0$, and $\nu\colon [0,T)\rightarrow \Mi$ be such that (note the reduction in the domain of integration
accompanied by a change in the position of the time argument)
\begin{equation*}
\int_{\sX\times\sY}f(x,y) \nu_t(\dd x, \dd y) =
\int_{\Gamma_\mathrm{in}\times\sY}f(\xi,y) \nu(t,\xi, \dd y)\dd \xi \qquad \forall f \in\Lpb \quad \forall t\in[0,T).
\end{equation*}
Suppose further that there is a $\nu_\ast\in(0,\infty)$ such that
$\sup_{t\in [0,T),\xi\in\Gamma_\mathrm{in}}\norm{\nu(t,\xi,\cdot)}_{\sY-\mathrm{TV}}/u_t(\xi)\cdot n(\xi)\leq\nu_\ast$, then
$\int_0^t \widetilde{U}^{t,s}\nu_s \dd s \in L^\infty\left([0,T), \Moi\right)$ and for all (not just almost all) $t<T$
\begin{equation*}
\norm{\int_0^t \widetilde{U}^{t,s}\nu_s \dd s}_{\Moi}
\leq
\nu_\ast e^{\norm{\nabla\cdot u}_\infty \min(t,t_0)}.
\end{equation*}
\end{prop}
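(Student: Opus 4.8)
The plan is to identify the measure $\int_0^t\widetilde{U}^{t,s}\nu_s\,\dd s$ explicitly, by unwinding the duality pairing and then changing variables. Testing against an arbitrary $f\in\Lpb$, the definition of the pre-dual together with $U^{s,t}f(x,y)=f(\Phi_{s,t}(x),y)\,\II(\Phi_{s,t}(x)\in\sX)$ (Definition~\ref{d:tprop}) and the hypothesised boundary form of $\nu_s$ give
\[
\left\langle f,\int_0^t\widetilde{U}^{t,s}\nu_s\,\dd s\right\rangle
=\int_0^t\int_{\Gamma_\mathrm{in}}\II\!\left(\Phi_{s,t}(\xi)\in\sX\right)\int_\sY f\!\left(\Phi_{s,t}(\xi),y\right)\nu(s,\xi,\dd y)\,\dd\xi\,\dd s .
\]
The right-hand side integrates, over the $d$-dimensional parameter set $[0,t]\times\Gamma_\mathrm{in}$, a quantity evaluated at $x=\Phi_{s,t}(\xi)$, so the idea is to push it forward onto an integral over $x\in\sX$ under $\Psi_t\colon(s,\xi)\mapsto\Phi_{s,t}(\xi)$. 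By the flow assumptions of \S\ref{s:flowass} the restriction of $\Psi_t$ to $\{(s,\xi)\in[0,t]\times\Gamma_\mathrm{in}\colon\Phi_{s,t}(\xi)\in\sX\}$ is a bijection onto $\Xi_t$: injectivity is exactly the uniqueness of the start pair $(s(t,x),\xi(t,x))$ among pairs with $\xi\in\Gamma_\mathrm{in}$, and surjectivity is the existence of such a pair with $s(t,x)\in[0,t]$. Since $\Gamma_\mathrm{in}$ is a regular $(d-1)$-manifold, $u\in C^2$, and $s,\xi$ are differentiable in $x$, $\Psi_t$ is $C^1$ and the area formula applies.

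The next step is the Jacobian of $\Psi_t$. Using the flow identities $\partial_s\Phi_{s,t}(\xi)=-\nabla\Phi_{s,t}(\xi)\,u_s(\xi)$ and that $\partial_\xi\Phi_{s,t}(\xi)$ is the restriction of the spatial Jacobian $\nabla\Phi_{s,t}(\xi)$ to the tangent space $T_\xi\Gamma_\mathrm{in}$, one writes $\mathrm{D}\Psi_t(s,\xi)$ as $\nabla\Phi_{s,t}(\xi)$ composed with the linear map $\RR\times T_\xi\Gamma_\mathrm{in}\to\RR^d$, $(a,v)\mapsto -a\,u_s(\xi)+v$, whose Jacobian is the magnitude of the component of $u_s(\xi)$ normal to $\Gamma_\mathrm{in}$. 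Hence, by Liouville's formula,
\[
J\Psi_t(s,\xi)=\abs{\det\nabla\Phi_{s,t}(\xi)}\,\abs{u_s(\xi)\cdot n(\xi)}
=\exp\!\left(\int_s^t(\nabla\cdot u_r)\!\left(\Phi_{s,r}(\xi)\right)\dd r\right)\abs{u_s(\xi)\cdot n(\xi)} .
\]
Writing $x=\Phi_{s,t}(\xi)$, so that $\Phi_{s,r}(\xi)=\Phi_{t,r}(x)$, $s=s(t,x)$ and $\xi=\xi(t,x)$, the change of variables identifies $\int_0^t\widetilde{U}^{t,s}\nu_s\,\dd s$ with the measure whose density, for $x\in\Xi_t$, is
\[
c(t,x,\dd y)=\exp\!\left(-\int_{s(t,x)}^t(\nabla\cdot u_r)\!\left(\Phi_{t,r}(x)\right)\dd r\right)\frac{\nu\!\left(s(t,x),\xi(t,x),\dd y\right)}{\abs{u_{s(t,x)}(\xi(t,x))\cdot n(\xi(t,x))}},
\]
and $c(t,x,\cdot)=0$ for $x\in\sX\setminus\Xi_t$ (the exponential factor being, reassuringly, exactly the one appearing in the action of $\widetilde{U}$ in Proposition~\ref{p:dualseries}); in particular $\int_0^t\widetilde{U}^{t,s}\nu_s\,\dd s\in\Moi$.

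It then remains only to read off the norm. Taking $\norm{\cdot}_{\sY-\mathrm{TV}}$ in the density formula and using the hypothesis in the form $\norm{\nu(s,\xi,\cdot)}_{\sY-\mathrm{TV}}\le\nu_\ast\,\abs{u_s(\xi)\cdot n(\xi)}$ together with $\abs{\nabla\cdot u_r}\le\norm{\nabla\cdot u}_\infty$ gives, for almost every $x$, $\norm{c(t,x,\cdot)}_{\sY-\mathrm{TV}}\le\nu_\ast\exp\!\big(\norm{\nabla\cdot u}_\infty\,(t-s(t,x))\big)$. Since $s(t,x)\ge 0$ and, by the residence-time bound together with the uniqueness of the start pair, $t-s(t,x)\le t_0$, one has $t-s(t,x)\le\min(t,t_0)$ and hence $\norm{c(t,x,\cdot)}_{\sY-\mathrm{TV}}\le\nu_\ast e^{\norm{\nabla\cdot u}_\infty\min(t,t_0)}$; the essential supremum over $x$ yields the stated estimate for every (not just almost every) $t<T$. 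This bound is uniform in $t\in[0,T)$, and the joint measurability of the flow and of $\nu$, with Fubini, makes $t\mapsto\int_0^t\widetilde{U}^{t,s}\nu_s\,\dd s$ a measurable $\Moi$-valued map, so it lies in $L^\infty\left([0,T),\Moi\right)$.

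The step I expect to be the main obstacle is the rigorous justification of the change of variables: pinning down the effective domain of $\Psi_t$, deducing almost-everywhere injectivity from the uniqueness-of-start-data assumption, and obtaining the factorised Jacobian $\abs{\det\nabla\Phi}\,\abs{u\cdot n}$ — it is precisely the normal-flux factor $\abs{u_s(\xi)\cdot n(\xi)}$ that makes the hypothesis on $\nu$ the natural one and cancels, so that the final bound carries no trace of the geometry of $\Gamma_\mathrm{in}$. A subsidiary technical point is the inequality $t-s(t,x)\le t_0$, which follows from the residence-time assumption once one observes that a trajectory leaving \sX and re-entering through $\Gamma_\mathrm{in}$ would yield a second admissible start pair, contradicting uniqueness.
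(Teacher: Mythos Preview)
Your proof is correct and follows essentially the same route as the paper: both push forward the integral over $(s,\xi)\in[0,t]\times\Gamma_\mathrm{in}$ to an integral over $x=\Phi_{s,t}(\xi)\in\sX$, compute the Jacobian via $\partial_s\Phi_{s,t}(\xi)=-\nabla\Phi_{s,t}(\xi)\,u_s(\xi)$ to obtain the factor $\abs{\det\nabla\Phi_{s,t}(\xi)}\,\abs{u_s(\xi)\cdot n(\xi)}$, and then read off the $\Moi$ bound. Your factorisation of $\mathrm{D}\Psi_t$ through $\nabla\Phi_{s,t}$ is a slightly cleaner way to arrive at the same Jacobian formula that the paper derives by row manipulations in a local orthonormal frame, and your explicit identification of the density $c(t,x,\dd y)$ and of the image set $\Xi_t$ makes transparent points the paper leaves implicit.
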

\begin{proof}
Let $\xi\in \Gamma_\mathrm{in}$ and take an orthonormal basis for $\RR^d$ at $\xi$ given by
$e_1 = n(\xi)$ the outward normal and $e_2, \dotsc, e_d \in \Gamma_\mathrm{in}$.  With respect
to this basis let the rows of the matrix $\nabla \Phi_{r,t}(x)\mid_{x=\xi}$ be $\partial_i \Phi_{r,t}(x)\mid_{x=\xi}$.
Thus rows $2,\dotsc, d$ of this matrix are the same as rows $2,\dotsc, d$ of
$\frac{\partial \Phi_{r,t}(\xi)}{\partial (r,\xi)}$ and using Proposition~\ref{p:dsPhi} the first row is
\begin{multline}
\pderiv{r} \Phi_{r,t}(\xi) = -\nabla \Phi_{r,t}(x)\mid_{x=\xi} u_r(\xi) =\\
-\sum_{i=1}^d \nabla \Phi_{r,t}(x)\mid_{x=\xi} e_i \left(e_i \cdot u_r(\xi)\right)=
-\sum_{i=1}^d \partial_i \Phi_{r,t}(x)\mid_{x=\xi}  \left(e_i \cdot u_r(\xi)\right),
\end{multline}
which is $\pm\left(\partial_1 \Phi_{r,t}(x)\mid_{x=\xi} \right) \left(n(\xi) \cdot u_r(\xi)\right)$ plus a linear
combination of the remaining rows.  One thus has for $\xi\in\Gamma_\mathrm{in}$
\begin{equation}
\det \left(\frac{\partial \Phi_{r,t}(\xi)}{\partial (r,\xi)}\right) =
-u_r(\xi) \cdot n(\xi) \det\left(\nabla \Phi_{r,t}(x)\right)  \biggr\rvert_{x=\xi}.
\end{equation}

Now let $f\in\Lpb$ with $f\left(\Phi_{r,t}(\xi),y\right)=0$ for $\Phi_{r,t}(\xi) \notin \sX$
as in the definition of $U^{r,t}$ so
\begin{multline}
\left<f,\int_s^t \widetilde{U}^{t,r}\nu_r \dd r\right>=
\int_s^t\int_{\Gamma_\mathrm{in}}\int_\sY f\left(\Phi_{r,t}(\xi),y\right)\nu(r,\xi,\dd y)\dd \xi\dd r\\
=\int_{\substack{x\colon x=\Phi_{r,t}(\xi) \\ r\in(s,t),\xi\in\Gamma_\mathrm{in}}}
  \det \left(\frac{\partial \Phi_{r,t}(\xi)}{\partial (r,\xi)}\right)^{-1}\int_\sY f\left(x,y\right){\nu}(r,\xi,\dd y)\dd x\\
=\int_{\substack{x\colon x=\Phi_{r,t}(\xi) \\ r\in(s,t),\xi\in\Gamma_\mathrm{in}}}
  \abs{\det \left(\nabla \Phi_{r,t}(\xi)\right)^{-1}}\norm{f\left(x,\cdot\right)}_{\sY-\infty}
  \abs{u_r(\xi)\cdot n(\xi)}^{-1}\norm{\widehat{\nu}(x,\cdot)}\dd x\\
\leq e^{\norm{\nabla\cdot u}_\infty \min(t-s,t_0)}\nu_\ast
\int_{\substack{x\colon x=\Phi_{r,t}(\xi) \\ r\in(s,t),\xi\in\Gamma_\mathrm{in}}}
  \norm{f\left(x,\cdot\right)}_{\sY-\infty} \dd x,
\end{multline}
where $\widehat{\nu}(x,\dd y)$ is defined to be $\nu(r,\xi,\dd y)$ for the unique $r,\xi$ such that
$\Phi_{r,t}(\xi)=x$.  Proposition~\ref{p:dxPhi2} in the Appendix provides the estimate for the determinant.
\end{proof}

\begin{prop}\label{p:bdrydens}
Let $T>0$ and $\mu\in L^\infty\left([0,T),\left(\Mi^+,\norm{\cdot}_{B^\star}\right)\right)$, then under the
conditions of Proposition~\ref{p:bdrydens2}
\begin{equation*}
\norm{\int_0^t \widetilde{A}^{t,s}[\mu]\nu_s \dd s}_{\Moi\rightarrow\Moi}
\leq
\nu_\ast e^{\norm{\nabla\cdot u}_\infty \min(t,t_0)}.
\end{equation*}
\end{prop}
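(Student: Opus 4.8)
The plan is to iterate the Duhamel (Dyson) expansion of $\widetilde{A}^{t,s}[\mu]$ exactly once, so that the coagulation contribution is exhibited in a form whose total mass at each spatial point is \emph{non-positive}, leaving the pure–transport term — already controlled in \propref{p:bdrydens2} — as the dominating quantity. Write $w_t := \int_0^t \widetilde{A}^{t,s}[\mu]\nu_s \,\dd s$; the left–hand side of the assertion is $\norm{w_t}_{\Moi}$. Since in the regime in which \propref{p:bdrydens2} is applied the boundary data $\nu_s$ are non-negative, the positivity preservation of $\widetilde{A}^{t,s}[\mu]$ (\propref{p:pospres}) makes each $\widetilde{A}^{t,s}[\mu]\nu_s$, hence $w_t$, a non-negative element of \Mi with finite $B^\star$-norm.

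First I would extract the leftmost factor $\widetilde{U}^{t,s_m}\widetilde{H}_{s_m}[\mu]$ from each term of the series in \propref{p:Adualrep}; what remains is precisely the series of $\widetilde{A}^{s_m,s}[\mu]$, yielding
\[
\widetilde{A}^{t,s}[\mu] = \widetilde{U}^{t,s} + \int_s^t \widetilde{U}^{t,\sigma}\widetilde{H}_\sigma[\mu]\widetilde{A}^{\sigma,s}[\mu]\,\dd\sigma .
\]
Integrating against $\nu_s$ over $s\in[0,t]$ and exchanging the $s$- and $\sigma$-integrals (admissible because $\esssup_{s}\norm{\nu_s}_{B^\star}<\infty$ and the series converges in operator norm by \propref{p:Adual}) gives
\[
w_t = \int_0^t \widetilde{U}^{t,s}\nu_s\,\dd s + \int_0^t \widetilde{U}^{t,\sigma}\widetilde{H}_\sigma[\mu]\,w_\sigma\,\dd\sigma .
\]
A twin rearrangement — extracting $\widetilde{H}_{s_1}[\mu]$ on the right and collecting $g_\rho:=\int_0^\rho \widetilde{U}^{\rho,s}\nu_s\,\dd s$ before applying further operators — shows $w_t=\int_0^t\widetilde{U}^{t,s}\nu_s\,\dd s+\int_0^t\widetilde{A}^{t,\rho}[\mu]\widetilde{H}_\rho[\mu]g_\rho\,\dd\rho$; since $g_\rho\in\Moi$ by \propref{p:bdrydens2}, and $\widetilde{H}_\rho[\mu]$ (\propref{p:dualseries}) and $\widetilde{A}^{t,\rho}[\mu]$ (\propref{p:densprop}) both preserve \Moi, it follows that $w_\sigma\in\Moi^+$ for every $\sigma\in[0,t]$ (the individual terms $\widetilde A^{\sigma,s}[\mu]\nu_s$ are still singular, and only the time integration produces a density, which is exactly why this rearrangement rather than a direct bound is needed).

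The crux is then two elementary sign facts read off \propref{p:dualseries}. Because $w_\sigma\ge0$ and $\mu_\sigma\ge0$, testing the coagulation formula with the constant function $\phi\equiv1$ collapses gain against loss and leaves
\[
(\widetilde{H}_\sigma[\mu]w_\sigma)(x,\sY)=-\tfrac12\int_{\sX\times\sY}h(x_2,x)K(y,y_2)\,\mu_\sigma(\dd x_2,\dd y_2)\,w_\sigma(x,\dd y)\le 0 \quad\text{for a.e. }x,
\]
i.e.\ coagulation only removes mass at each position; and $\widetilde{U}^{t,\sigma}$ carries a density $c(x,\dd y)\,\dd x$ to $e^{-\int_\sigma^t\nabla\cdot u_r(\Phi_{t,r}(x))\,\dd r}\,c(\Phi_{t,\sigma}(x),\dd y)\,\dd x$ (zero where $\Phi_{t,\sigma}(x)\notin\sX$), a push-forward with strictly positive scalar weight, so it preserves the sign of $x\mapsto c(x,\sY)$. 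Hence $(\widetilde{U}^{t,\sigma}\widetilde{H}_\sigma[\mu]w_\sigma)(x,\sY)\le0$ for a.e.\ $x$, and the Duhamel identity gives $0\le w_t(x,\sY)\le\left(\int_0^t\widetilde{U}^{t,s}\nu_s\,\dd s\right)(x,\sY)$ for a.e.\ $x$, the lower bound coming from $w_t\ge0$. Taking $\esssup_x$ and invoking \propref{p:bdrydens2} yields $\norm{w_t}_{\Moi}\le\norm{\int_0^t\widetilde{U}^{t,s}\nu_s\,\dd s}_{\Moi}\le\nu_\ast e^{\norm{\nabla\cdot u}_\infty\min(t,t_0)}$.

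The step I expect to demand the most care is not the sign computation but the surrounding bookkeeping: both Duhamel identities and the \Moi-membership of $w_\sigma$ are Fubini / termwise-integration arguments for operator-valued integrals in which the data $\nu_s$ lie in \Mi but \emph{not} in \Moi, so one must argue convergence throughout in the coarser $B^\star$-norm (where $\norm{\nu_s}_{B^\star}=\int_{\Gamma_\mathrm{in}}\norm{\nu(s,\xi,\cdot)}_{\sY-\mathrm{TV}}\,\dd\xi<\infty$) and pass to the finer \Moi-norm only at the end, leaning on the fact — the real content of \propref{p:bdrydens2} — that advection smears the $(d-1)$-dimensional boundary layer into a genuine Lebesgue density.
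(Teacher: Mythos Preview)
Your argument is correct and implements exactly the idea the paper sketches: use the Dyson series from \propref{p:Adualrep} together with the fact that coagulation never increases the pointwise $\sY$-mass of a non-negative density, which is precisely the content behind \propref{p:posdensprop}. The paper invokes the latter as a black-box operator-norm estimate on $\Moi^+$, whereas you unfold it into a Duhamel identity for $w_t$ and an explicit sign check on $(\widetilde{H}_\sigma[\mu]w_\sigma)(x,\sY)$; the underlying mechanism is the same.
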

\begin{proof}
Use the series expansion from Proposition~\ref{p:Adualrep}
and the \Moi-operator norm estimates from Proposition~\ref{p:posdensprop}.
\end{proof}

\begin{proof}[Proof of Theorem \ref{t:globdens}]
Theorem~\ref{t:globexist} provides the existence of a solution $c$.  Proposition~\ref{p:fixedsoln} shows that this solution satisfies
\begin{equation}\label{e:measrep}
c_t = \widetilde{A}^{t,0}[c] c_0 +\int_0^t \widetilde{A}^{t,s}[c] I_s \dd s \quad t\geq 0.
\end{equation}
Propositions~\ref{p:posdensprop}\&\ref{p:bdrydens} show that this is in \Moi for all times.  The boundedness follows from the estimates in the same two propositions.
\end{proof}

In order to obtain a strong solution to \eqref{eq:smol} it is not sufficient that the measure valued solutions
have a density with respect to Lebesgue measure on \sX, this density should itself have a derivative.

\subsection{Differentiability}

One could proceed as in Proposition~\ref{p:ApropD} to see that $\widetilde{A}^{t,s}[c]$ preserves
measures with \sX-differentiable densities except for possible jumps where $s,t,x$ are such that
$\Phi_{t,s}(x) \in \Gamma_\mathrm{in}$.  This leaves two questions open---how to handle these
jumps and secondly the treatment of the integral term from \eqref{e:measrep} and in particular the
$I_\mathrm{bdry}$ part of $I$ in that integral.  The right approach to these tasks seems to be to introduce
the space of measures with \sX-bounded variation densities:
\begin{defn}\label{d:bv}
\begin{equation*}
\Mbv=\left\{c\in\Moi \colon \left(\exists C=C(c)\right)\left(\forall f \in D^d\right)\left(\left<\nabla\cdot f,c\right> 
\leq C\norm{f}_{B^d}\right)\right\}.
\end{equation*}
\end{defn}
This is equivalent to the existence of a measure $\nabla c \in \Mi^d$ (not necessarily in \Moi) such
that $\left<\nabla f, c\right>=-\left<f, \nabla c\right>$.

Until now the notation $\left<f,\mu\right>$ has been used for $\int_{\sX\times\sY}f(x,y)\mu(\dd x,\dd y)$
for $f\in\Lpb$ and $\mu \in \Mi$.  To consider derivatives it is necessary to move to vector valued functions
and measures; to facilitate this the notation is extended so that for $g\in\Lpb^d$ and $\nu \in \Mi^d$
\begin{equation}
\left<g,\nu\right>:=\sum_{i=1}^d\int_{\sX\times\sY}g_i(x,y)\nu_i(\dd x,\dd y).
\end{equation}

Some more definitions are now needed for the proof that $\int_0^t \widetilde{A}^{t,s}I_s\dd s$ and by
extension the entire solution is \Mbv.  First recall $s(t,x)$ from \S\ref{s:flowass}, the time at which a particle
travelling with the flow must have entered the domain in order to reach $x$ at time $t$.  Since $t$
is fixed in the relevant places $s(x)$ will be written for brevity in numerous sub- and superscripts, the
$t$ should be understood.
\begin{defn}\label{d:ftilde}
Let $f\in\Lpb$, $t>0$ and $\mu\in C\left([0,t],\Mi\right)$.  Define $f^0_{r,t}=U^{r,t}f$,
and $f^{m+1}_{r,t}=\int_r^t U^{r,s}H_s[\mu]f^m_{s,t}\dd s$ as in Definition~\ref{d:Aterm}.  The define $\tilde{f}$
by
\begin{equation*}
\tilde{f}^m_{r,t}(x,y) = f^m_{r,t}\left(\Phi_{t,r}(x),y\right)\II\left(\Phi_{t,r}(x)\in\sX\right)
\end{equation*}
the operators $S^{r,t}\colon \Lpb\rightarrow\Lpb$ by
\begin{equation*}
S^{r,t}f(x,y) = \sum_{m=0}^\infty\tilde{f}^m_{r,t}(x,y)
\end{equation*}
and finally the operator $\mathcal{S}^{t}\colon \Lpb\rightarrow\Lpb$ by
\begin{equation*}
\mathcal{S}^{t}f(x,y) = S^{s(x),t}f(x,y) = \sum_{m=0}^\infty\tilde{f}^m_{s(x),t}(x,y).
\end{equation*}
This operator can also be regarded as acting on $\Lpb^d$ by applying it componentwise.
\end{defn}

\begin{prop}\label{p:Snorm}
Let $f\in\Lpb$ or $f\in\Lpb^d$, $t>0$ and $\mu\in C\left([0,t],\Mi\right)$, then
\begin{equation*}
\norm{\mathcal{G}^t f(x,\cdot)}_{\sY-\infty} \leq
e^{\frac32 K_\infty\Cho \opnorm{\mu}_{\Lpb^\star}\left(t-s(x)\right)}\norm{f(x,\cdot)}_{\sY-\infty} 
\end{equation*}
for all $x\in\sX$ and the operator even preserves $D$.
\end{prop}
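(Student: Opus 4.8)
The plan is to read off every claim from the series defining $\mathcal{S}^{t}$ in \defref{d:ftilde}, using the termwise bounds already proved. First I would note that the indicator $\II(\Phi_{t,r}(x)\in\sX)$ in that definition equals $1$ once $r$ is set to $s(t,x)$ (written $s(x)$ below): indeed $\Phi_{t,s(x)}(x)=\xi(t,x)$ lies in $\sX$, being either a point of $\Gamma_\mathrm{in}\subset\sX$ or, when $s(x)=0$, the initial position. Hence $\tilde{f}^{m}_{s(x),t}(x,y)=f^{m}_{s(x),t}(\xi(t,x),y)$, and applying \propref{p:zero} at the base point $\xi(t,x)$ together with the flow identity $\Phi_{s(x),t}(\xi(t,x))=x$ gives
\begin{equation*}
\normi{\tilde{f}^{m}_{s(x),t}(x,\cdot)}
\leq\frac{1}{m!}\left(\frac32 K_\infty \Cho \opnorm{\mu}_{B^\star}(t-s(x))\right)^{m}\normi{f(x,\cdot)}.
\end{equation*}
Summing over $m$ shows the series converges absolutely and uniformly, so $\mathcal{S}^{t}f$ is a well-defined element of $\Lpb$, and simultaneously produces the stated bound; the $\Lpb^{d}$ case is identical, since \propref{p:zero} and its inductive proof apply verbatim to vector-valued $f$ with the Euclidean norm on the target. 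The boundary condition of \defref{d:D} is then immediate, since the residence-time hypothesis forces $t-s(x)\leq t_{0}$, so the prefactor $e^{\frac32 K_\infty \Cho \opnorm{\mu}_{B^\star}(t-s(x))}$ is bounded uniformly over $x\in\sX$; thus the vanishing of $\normi{f(x,\cdot)}$ as $x\to\Gamma_\mathrm{out}$ (which holds for $f\in\Wopb$) forces the same for $\mathcal{S}^{t}f$, and boundedness of $\mathcal{S}^{t}f$ everywhere follows from the same prefactor.

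The substantive part is differentiability with $\nabla\mathcal{S}^{t}f\in\Lpb^{d}$. For this I would invoke \propref{p:ApropD}, and hence need H2 or H3 (under H2 the continuity of $\mu$ supplies the boundedness for all times that proposition requires): each $g^{r}:=A^{r,t}[\mu]f=\sum_{m}f^{m}_{r,t}$ lies in $\Wopb$ with $\normoid{\nabla g^{r}}$ bounded uniformly for $r\in[s(x),t]$, and $r\mapsto g^{r}$ is differentiable with $\partial_{r}g^{r}=-A_{r}[\mu]g^{r}=-u_{r}\cdot\nabla g^{r}-H_{r}[\mu]g^{r}$. Since by the indicator computation above $\mathcal{S}^{t}f(x,y)=g^{s(x)}(\xi(t,x),y)$, I would write $\mathcal{S}^{t}f(x,y)=G(x,s(x),y)$ with $G(x,r,y):=g^{r}(\Phi_{t,r}(x),y)$ and differentiate termwise in the series, which is legitimate because \propref{p:Aderivterm} makes the series of $x$- and $r$-derivatives uniformly convergent (the extra factors $m$ there being absorbed by $1/m!$). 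The explicit-$x$ contribution is $\nabla g^{r}(\Phi_{t,r}(x),y)\cdot\nabla_{x}\Phi_{t,r}(x)$, bounded via \propref{p:ApropD} and the estimate for $\nabla_{x}\Phi_{t,r}$ in \propref{p:dxPhi2}; the $r$-contribution, multiplied by $\nabla s$ (bounded by the flow hypotheses), combines $\partial_{r}g^{r}$ with $\partial_{r}\Phi_{t,r}(x)=u_{r}(\Phi_{t,r}(x))$, the two transport terms $\pm u_{r}\cdot\nabla g^{r}$ cancelling and leaving only $-H_{r}[\mu]g^{r}$ at $\Phi_{t,r}(x)$, which is bounded by \propref{p:Hbound}. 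All pieces being bounded uniformly in $x$, one obtains $\nabla\mathcal{S}^{t}f\in\Lpb^{d}$, and with the previous step $\mathcal{S}^{t}f\in\Wopb$; for $f\in\Lpb^{d}$ the same holds componentwise.

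I expect the main obstacle to be precisely this cancellation and its justification near the inflow set. The cancellation is a method-of-characteristics identity: differentiating $A^{r,t}[\mu]$ in $r$ removes exactly the transport that composing with $\Phi_{t,r}$ reintroduces, so the unbounded generator drops out and only the bounded coagulation term survives. Making this rigorous takes care, because the chain rule for $G$ is applied along the lower boundary $r=s(t,x)$ of the region $\{(x,r):s(t,x)\leq r\leq t\}$, using one-sided $r$-derivatives that must be shown to extend continuously up to $\Gamma_\mathrm{in}$, and because $s$ and $\xi$ are only differentiable rather than smooth, so one genuinely needs $\norm{\nabla s}_\infty<\infty$ together with the appendix bound on $\nabla\xi$. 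It should also be recorded that the preservation of $\Wopb$ rests on H2 or H3 through \propref{p:ApropD}, whereas the norm bound itself uses only H1.
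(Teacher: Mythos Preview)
Your proposal is correct and follows the same strategy the paper indicates: the norm bound comes from summing the termwise estimate of \propref{p:zero} evaluated at the base point $\xi(t,x)$ with $r=s(x)$, and the preservation of $\Wopb$ is obtained from the derivative estimates underlying \propref{p:Aderivterm}/\propref{p:ApropD}. Your organisation via the summed propagator $g^{r}=A^{r,t}[\mu]f$ and the explicit method-of-characteristics cancellation (so that only the bounded $H_{r}[\mu]$ term survives in the $r$-derivative) is equivalent to the paper's termwise viewpoint---the same cancellation occurs term by term, since $\partial_{r}f^{m}_{r,t}=-H_{r}[\mu]f^{m-1}_{r,t}-u_{r}\cdot\nabla f^{m}_{r,t}$---and your remark that H2 or H3 is needed for this half of the statement is well taken.
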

\begin{proof}
This is an exercise in estimating the terms of the summation as in Propositions~\ref{p:zero}\&\ref{p:Aderivterm}.
\end{proof}

\begin{prop}\label{p:divS}
Let $F\in\Wopb^d$, $t>0$ and $\mu\in C\left([0,t],\Mi\right)$, then the operator
$\left(\nabla\cdot\mathcal{S}^t\right) \colon \Wopb^d\rightarrow \Lpb$
defined by
\begin{equation*}
\left(\nabla\cdot\mathcal{S}^t\right)F =
\nabla\cdot\left(\mathcal{S}^t F\right)-\mathcal{S}^t \left(\nabla\cdot F\right)
\end{equation*}
satisfies
\begin{multline*}
\norm{\left(\nabla\cdot\mathcal{S}^t\right)F(x,\cdot)}_{\sY-\infty}
\leq
\norm{F(x,\cdot)}_{\sY-\infty}\times \\
  \left(3K_\infty\Chd t_0 \opnorm{\mu}_{\Lpb^\star}+\norm{\nabla s}_\infty \right)
  \left(1+\frac32 K_\infty\Cho \opnorm{\mu}_{\Lpb^\star}\left(t-s(x)\right)\right)
  e^{\frac32 K_\infty\Cho \opnorm{\mu}_{\Lpb^\star}\left(t-s(x)\right)}
\end{multline*}
for all $x\in\sX$.  Of course $\mathcal{S}^t$ depends on $\mu$, but since this will always be the unique
solution to \eqref{eq:smol} this detail is ignored in the notation.
\end{prop}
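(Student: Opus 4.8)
The plan is to differentiate the defining series $\mathcal{S}^t f=\sum_{m\ge 0}\tilde{f}^{\,m}_{s(\cdot),t}$ term by term, after first rewriting its terms in a form in which the $x$--dependence is transparent. Iterating Definition~\ref{d:Aterm}, applying the cocycle identity $\Phi_{b,c}\circ\Phi_{a,b}=\Phi_{a,c}$ of the flow repeatedly so that the test function ends up evaluated at $x$ itself, and noting that for $r\in[s(x),t]$ every indicator factor is $1$ (the streamline through $(t,x)$ lies in $\sX$ on the whole of $[s(x),t]$), one finds $\tilde{f}^{\,0}_{r,t}(x,y)=f(x,y)$ for $r\in[s(x),t]$ and, for $m\ge1$,
\[
\tilde{f}^{\,m}_{r,t}(x,y)=\int_r^t \mathcal{H}^{s}[\mu]\,\tilde{f}^{\,m-1}_{s,t}(x,y)\,\dd s ,
\]
where $\mathcal{H}^{s}[\mu]$ is the coagulation operator of Definition~\ref{d:Htstar} with $h$ replaced by $h(\Phi_{t,s}(x),\cdot)$ (and its reflection along the flow): it acts locally in $x$ apart from this coefficient and obeys $\normi{\mathcal{H}^{s}[\mu]g(x,\cdot)}\le\tfrac32 K_\infty\Cho\norm{\mu_s}_{\Lpb^\star}\normi{g(x,\cdot)}$. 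Unwinding the recursion exhibits $\tilde{f}^{\,m}_{s(x),t}(x,y)$ as an $m$--fold ordered time integral of a product of the bounded factors $K$ and $\mu$, coefficients $h(\Phi_{t,s_j}(x),\cdot)$, and the innermost factor $f$ evaluated at $(x,y+\text{a partial sum of the }y_j)$. Thus $x$ occurs only in this innermost $f$, in the coefficients through $\Phi_{t,s_j}(x)$, and --- in the outermost term of $\mathcal{S}^t f=\sum_m\tilde{f}^{\,m}_{s(x),t}$ alone --- in the lower limit $s(x)$. Proposition~\ref{p:Snorm} guarantees $\mathcal{S}^tF\in\Wopb^d$, so both terms in $(\nabla\cdot\mathcal{S}^t)F$ are well-defined elements of $\Lpb$.

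Applying the Leibniz rule to $\partial_{x_i}(\mathcal{S}^t F_i)$ and summing over $i$ splits $\nabla\cdot(\mathcal{S}^t F)$ into three families. (i) In the terms where $\partial_{x_i}$ hits the innermost factor, $F_i(x,\cdot)$ is simply replaced by $(\partial_{x_i}F_i)(x,\cdot)$, the rest of each iterated integral --- simplex, $K$, $\mu$, coefficients, $y$--shift pattern --- being component-independent; so after summing over $i$ the innermost factor becomes $(\nabla\cdot F)(x,\cdot)$ and summing over $m$ reassembles exactly $\mathcal{S}^t(\nabla\cdot F)(x,y)$, which cancels against the second term of $(\nabla\cdot\mathcal{S}^t)F$. (ii) In the $m$ terms where $\partial_{x_i}$ hits a coefficient $h(\Phi_{t,s_j}(x),\cdot)$ one obtains $(\nabla h)(\Phi_{t,s_j}(x),\cdot)\cdot\partial_{x_i}\Phi_{t,s_j}(x)$; here Proposition~\ref{p:dxPhi2} bounds the flow Jacobian $\nabla\Phi_{t,s_j}(x)$ and H2 (a fortiori H3) bounds the $s_j$--integral of $\abs{\nabla h}$ along the streamline --- read in the total-variation sense when H2 allows $\nabla h$ to carry atoms --- the attendant $\opnorm{\nabla u}$ exponentials being at most $e^{\opnorm{\nabla u}t_0}$, hence absorbed, since $t-s(x)\le t_0$. (iii) Finally $\partial_{x_i}$ on the outermost lower limit contributes $-\partial_{x_i}s(x)\,\bigl(\mathcal{H}^{s(x)}[\mu]\,\widetilde{(F_i)}^{\,m-1}_{s(x),t}\bigr)(x,y)$; summing over $m$ and $i$ this is $-\nabla s(x)\cdot\mathcal{H}^{s(x)}[\mu]\,\mathcal{S}^t F(x,y)$, which by $\norm{\nabla s}_\infty$, the norm of $\mathcal{H}^{s(x)}$, and Proposition~\ref{p:Snorm} is controlled by $\norm{\nabla s}_\infty$ times the exponential factor of the claim.

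It remains to sum families (ii) and (iii). For fixed $j$ and $m$ a contribution in (ii) carries one $\abs{\nabla h}$--integral and $m-1$ ordinary $\mathcal{H}$--integrals, hence is bounded by a constant of order $3K_\infty\Chd t_0\opnorm{\mu}_{\Lpb^\star}$ --- the $3$ counting the at most three occurrences of $h$ in $\mathcal{H}$, the $\normi{\chi_{j,i}}$ absorbed via H1 --- times $\tfrac{1}{(m-1)!}\bigl(\tfrac32 K_\infty\Cho\opnorm{\mu}_{\Lpb^\star}(t-s(x))\bigr)^{m-1}\normi{F(x,\cdot)}$; there are $m$ choices of $j$, and $\sum_{m\ge1}\tfrac{m}{(m-1)!}a^{m-1}=(1+a)e^{a}$ with $a=\tfrac32 K_\infty\Cho\opnorm{\mu}_{\Lpb^\star}(t-s(x))$ produces precisely the factor $\bigl(1+\tfrac32 K_\infty\Cho\opnorm{\mu}_{\Lpb^\star}(t-s(x))\bigr)e^{\tfrac32 K_\infty\Cho\opnorm{\mu}_{\Lpb^\star}(t-s(x))}$; family (iii) is dominated by the same closed form. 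Adding the two gives the asserted bound, and since the estimates are absolutely and uniformly summable (using $t-s(x)\le t_0$ once more) they simultaneously justify the term-by-term differentiation and show $(\nabla\cdot\mathcal{S}^t)F\in\Lpb$. The main obstacle is the bookkeeping of the Leibniz expansion: one must arrange it so the ``derivative on $F$'' terms are visibly component-blind and telescope into $\mathcal{S}^t(\nabla\cdot F)$, keep the two roles of $s(x)$ strictly apart (outermost lower limit, versus the absence of any dependence of the coefficients on it), track the $m$--dependence of the constants closely enough that families (ii) and (iii) collapse to the single closed form, and verify that the $\opnorm{\nabla u}$ exponentials from the flow Jacobians and from the right sides of H2 and H3 are innocuous by the finite-residence-time bound.
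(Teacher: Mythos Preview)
Your approach is essentially the same as the paper's. The paper proceeds by induction on the recursive definition of $\tilde{F}^m_{r,t}$ (with $r$ fixed) to bound $\normi{\nabla\cdot\tilde{F}^m_{r,t}(x,\cdot)-\tilde{f}^m_{r,t}(x,\cdot)}$, and then ``replaces $r$ by $s(x)$'' to pick up the $\norm{\nabla s}_\infty$ contribution; you unwind the recursion first via the cocycle identity and then apply the Leibniz rule directly, but the resulting three families---derivative on the innermost $F$ (cancelling $\mathcal{S}^t(\nabla\cdot F)$), derivative on the $h$--coefficients along the streamline, derivative on the outermost lower limit $s(x)$---are exactly the three contributions implicit in the paper's induction step and its subsequent substitution. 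Your use of $\sum_{m\ge 1}\tfrac{m}{(m-1)!}a^{m-1}=(1+a)e^a$ makes explicit what the paper leaves as ``the result follows''.

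One small point: your bound on family~(iii) is $\norm{\nabla s}_\infty\cdot\tfrac32 K_\infty\Cho\opnorm{\mu}_{\Lpb^\star}\,e^{a}\,\normi{F(x,\cdot)}$, which does not literally fit under $\norm{\nabla s}_\infty(1+a)e^{a}\normi{F(x,\cdot)}$ when $\tfrac32 K_\infty\Cho\opnorm{\mu}_{\Lpb^\star}>1+a$. The paper's sketch is no more explicit on this point, and since the only use of the proposition (in Proposition~\ref{p:solnbv}) is that $(\nabla\cdot\mathcal{S}^t)$ is bounded $\Wopb^d\to\Lpb$, the precise constant is immaterial; but if you want the displayed inequality verbatim you should either note that the extra factor can be absorbed into the stated form up to an inessential adjustment, or simply record the slightly different constant your argument actually produces.
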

\begin{proof}
Define $\tilde{F}^m_{r,t}$ by replacing $f$ with $F$ throughout Definition~\ref{d:ftilde} and let 
$f=\nabla \cdot F$ and let $\tilde{f}^m_{r,t}$ be as in Definition~\ref{d:ftilde}.  By induction one establishes
\begin{multline}
\norm{\nabla\cdot\tilde{F}^m_{r,t}(x,\cdot)-\tilde{f}^m_{r,t}(x,\cdot)}_{\sY-\infty}
\leq
\norm{F(x,\cdot)}_{\sY-\infty}\times \\
  \frac32 K_\infty\Chd t_0 \opnorm{\mu}_{\Lpb^\star}
  m\left(\frac32 K_\infty\Chd t_0 \opnorm{\mu}_{\Lpb^\star}\right)^{m-1}\frac{\left(t-s(x)\right)^{m-1}}{(m-1)!}.
\end{multline}
One then establishes a similar formula with $r$ replaced by $s(x)$ and the result follows.
\end{proof}

\begin{prop}\label{p:solnbv}
Assume H2 or H3 holds, that $c_0$ is in the positive cone of \Mbv, and that I3 is satisfied,
then the unique solution $c$ to \eqref{eq:smol} given by Theorem~\ref{t:globdens} satisfies
$c_t \in \Mbv\ \forall t\in\RRP$.
\end{prop}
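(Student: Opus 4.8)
By Theorem~\ref{t:globdens} the solution $c$ lies in $L^\infty\left([0,\infty),\Moi\right)$ and, being fed by the non-negative $c_0$ and $I_t$, in its positive cone; moreover Proposition~\ref{p:fixedsoln} gives the representation $c_t = \widetilde{A}^{t,0}[c] c_0 +\int_0^t \widetilde{A}^{t,s}[c] I_s \dd s$. Thus $c_t$ already has an $L^\infty$ density and the task reduces to producing the measure $\nabla c_t\in\Mi^d$ of Definition~\ref{d:bv}, i.e.\ to bounding $\left<\nabla\cdot F,c_t\right>$ for $F\in\Wopb^d$ by $C(t)\normoid{F}$ with $C(t)$ locally bounded in $t$. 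The plan is to split $I_s$ according to I2 into an interior part $I_{\mathrm{int},s}\in\Moi$ (with $\nabla I_{\mathrm{int},s}\in(\Moi)^d$ by I3) and a part $I_{\mathrm{bdry},s}$ carried by $\Gamma_\mathrm{in}$ (of class $C^1$ in $(t,\xi)$ by I3), to write $c_t = c^{(0)}_t + c^{(1)}_t + c^{(2)}_t$ with $c^{(0)}_t=\widetilde{A}^{t,0}[c]c_0$, $c^{(1)}_t=\int_0^t\widetilde{A}^{t,s}[c]I_{\mathrm{int},s}\dd s$ and $c^{(2)}_t=\int_0^t\widetilde{A}^{t,s}[c]I_{\mathrm{bdry},s}\dd s$, and to place each summand in $\Mbv$ separately; since the defining constant in Definition~\ref{d:bv} is additive this suffices.

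For $c^{(0)}_t$ the plan is: test against $F\in\Wopb^d$, pass to the dual side via $\left<\nabla\cdot F,c^{(0)}_t\right>=\left<A^{0,t}[c](\nabla\cdot F),c_0\right>$, and recognise the right-hand side through the pull-back operators of Definition~\ref{d:ftilde}. Here $c^{(0)}_t$ is carried by $\sX\setminus\Xi_t=\{x\colon s(t,x)=0\}$, where $\mathcal{S}^t$ pulls back precisely to the entry time $0$; up to the Liouville factor $\exp\bigl(-\int_0^t\nabla\cdot u_r(\Phi_{t,r}(x))\dd r\bigr)$, which is $C^1$ in $x$ because $\norm{\nabla(\nabla\cdot u)}_\infty<\infty$, and the $C^2$ flow diffeomorphism $\Phi_{0,t}$, the dual action $A^{0,t}[c](\nabla\cdot F)$ relates to $\nabla\cdot\bigl(\mathcal{S}^t F\bigr)$ only through the divergence commutator $\nabla\cdot\mathcal{S}^t$ of Proposition~\ref{p:divS}, which bounds it by a multiple of $\normoi{F}$ (the coagulation series being controlled by Proposition~\ref{p:Snorm}). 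Hence, after one integration by parts against $c_0$, $\left<\nabla\cdot F,c^{(0)}_t\right>$ is bounded by $C(t)\normoid{F}$ using $c_0\in\Mbv$ and $c_0\in\Moi$, while the jump of $c^{(0)}_t$ across the $C^1$ interface $\partial\Xi_t$ (regular by \S\ref{s:flowass}) contributes only a bounded surface term, finite because $\sX$ is relatively compact. The term $c^{(1)}_t$ is handled in the same way for each fixed $s$, with $I_{\mathrm{int},s}$ and $\nabla I_{\mathrm{int},s}$ playing the roles of $c_0$ and $\nabla c_0$ and with constants uniform in $s\in[0,t]$ by I3; integrating the resulting uniform bound over $s\in[0,t]$ keeps $c^{(1)}_t$ in $\Mbv$.

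The term $c^{(2)}_t$ is the heart of the proof: for each fixed $s$ the measure $\widetilde{A}^{t,s}[c]I_{\mathrm{bdry},s}$ is singular --- carried by the advected image of $\Gamma_\mathrm{in}$ --- so the $s$-integral cannot be treated termwise, and instead one performs, as in the proof of Proposition~\ref{p:bdrydens2}, the change of variables $(r,\xi)\mapsto x=\Phi_{r,t}(\xi)$, $\xi\in\Gamma_\mathrm{in}$, $r\in(0,t)$, whose Jacobian is $\abs{u_r(\xi)\cdot n(\xi)}\,\abs{\det\nabla\Phi_{r,t}(\xi)}$. Carrying the coagulation corrections through $\mathcal{S}^t$, this exhibits the density of $c^{(2)}_t$ at $x\in\Xi_t$ as an $\mathcal{S}^t$-weighted multiple of $I_{\mathrm{bdry}}(s(t,x),\xi(t,x),\cdot)$ divided by that Jacobian factor evaluated at $r=s(t,x)$, $\xi=\xi(t,x)$, and zero off $\Xi_t$. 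To bound $\left<\nabla\cdot F,c^{(2)}_t\right>$ one differentiates this in $x$: $I_{\mathrm{bdry}}$ is $C^1$ in $(t,\xi)$ and is composed with the Lipschitz maps $x\mapsto s(t,x)$, $x\mapsto\xi(t,x)$ (here $\norm{\nabla s}_\infty<\infty$ from \S\ref{s:flowass} and the bound on $\nabla\xi$ from the appendix enter); the flow and determinant factors are smooth, composed with those Lipschitz maps, and bounded away from $0$ by Proposition~\ref{p:dxPhi2}; the coagulation commutator is controlled by Proposition~\ref{p:divS}, whose bound already carries the term $\norm{\nabla s}_\infty$ precisely because $\mathcal{S}^t$ pulls back to the entry time $s(t,x)$; and the ratio of $I_{\mathrm{bdry}}$ to $u\cdot n$ stays bounded by the inequality $\norm{I_{\mathrm{bdry}}(t,\xi,\cdot)}_{\sY-\mathrm{TV}}\leq I_\ast u_t(\xi)\cdot n(\xi)$ of I2, which controls the behaviour near $\partial\Gamma_\mathrm{in}$. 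Collecting these estimates shows the density of $c^{(2)}_t$ is of bounded variation in $x$ with a constant $C(t)$, the jump across $\partial\Xi_t$ again contributing a bounded surface term. Under H3 the coagulation correction factors are $C^1$ via $\normoi{\pderiv{\xi}h_{i,x}}\leq\Chd$; under H2 with $d=1$, where $\Gamma_\mathrm{in}$ is a single point (so there is no $\partial\Gamma_\mathrm{in}$) and $u$ is bounded away from $0$, they are instead of special bounded variation along streamlines, inherited from $h=\sum_{j=1}^J\chi_{j,1}\chi_{j,2}$ with each $\chi_{j,i}$ of special bounded variation, which is still compatible with membership in $\Mbv$. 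The main obstacle is exactly this last term: showing that advection regularises the singular inflow datum enough that the spreading map $x\mapsto(s(t,x),\xi(t,x))$, the accompanying Jacobian, and the coagulation corrections together preserve bounded variation --- the delicate points being the behaviour near $\partial\Gamma_\mathrm{in}$ and $\Gamma_\mathrm{out}$ and across the interface $\partial\Xi_t$; everything else is a matter of chaining the operator estimates already in hand.
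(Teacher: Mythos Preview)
Your proposal is correct and follows essentially the same route as the paper: the same three-term decomposition from the fixed-point representation, the same change of variables $(s,\xi)\leftrightarrow x$ from Proposition~\ref{p:bdrydens2} for the boundary piece $c^{(2)}_t$, and the same use of the operator $\mathcal{S}^t$ together with the divergence commutator of Proposition~\ref{p:divS} to reduce the pairing $\left<\nabla\cdot F,c^{(2)}_t\right>$ to $\left<\nabla\cdot(\mathcal{S}^t F),\nu_t\right>$ plus a term controlled by $\normoid{F}$, where $\nu_t\in\Mbv$ is the explicit density built from $I_{\mathrm{bdry}}$, the Liouville factor, and $(u\cdot n)^{-1}$.

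Two minor points of divergence. First, for $c^{(0)}_t$ and $c^{(1)}_t$ the paper is more direct: it simply notes that $\widetilde{A}^{t,s}[c]$ preserves $\Mbv$ by the dual of Proposition~\ref{p:Aderivterm} (with the expected jump across $\{\Phi_{t,s}(x)\in\Gamma_\mathrm{in}\}$), whereas you route these terms through $\mathcal{S}^t$ as well; both arguments are sound, but the paper's is shorter here. Second, your closing discussion restricting to $d=1$ under H2 is misplaced: Proposition~\ref{p:solnbv} is stated and proved for general $d$ under H2 or H3, and the commutator bound of Proposition~\ref{p:divS} already works under either hypothesis without dimensional restriction. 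The $d=1$ condition enters only later, in Proposition~\ref{p:uniqderiv} and Theorem~\ref{t:globdiff}, where one needs the singular part of $\nabla h$ to be handled like boundary inflow.
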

\begin{proof}
Following \eqref{e:measrep} and \S\ref{s:incep} $c_t$ can be decomposed as
\begin{equation}
c_t = \widetilde{A}^{t,0}[c]c_0 + \int_0^t \widetilde{A}^{t,s}[c] I_\mathrm{int}(s) \dd s
  +\int_0^t \widetilde{A}^{t,s}[c] I_\mathrm{bdry}(s) \dd s.
\end{equation}
One checks in the same way as for the dual propagator in Proposition~\ref{p:Aderivterm} that
$\widetilde{A}^{t,s}[c]$ preserves \Mbv, possibly introducing a new jump on the manifold
$\left\{x\in\sX\colon \Phi_{t,s}(x)\in\Gamma_\mathrm{in}\right\}$.  This deals with the first two
terms in the above representation; the third term is somewhat more challenging.

Let $F\in D^d$ and $t\in \RRP$, then it is sufficient to show that 
\begin{equation}
\int_0^t \left<\nabla\cdot F,\widetilde{A}^{t,s}[c]I_\mathrm{bdry}(s)\right>\dd s =
\int_0^t \int_{\Gamma_\mathrm{in}} \int_\sY \left(A^{s,t}[c]f\right)(\xi,y) I_\mathrm{bdry}(s,\xi,\dd y)\dd \xi\dd s
\end{equation}
is bounded by a constant times $\norm{F}_{\Lpb^d}$.
One can introduce a change of variables $(s,\xi) \leftrightarrow x$ where ($t$ is fixed) $\Phi_{s,t}(\xi)=x$,
so $\xi = \Phi_{t,s}(x)$ is the point where fluid reaching $x$ at time $t$ entered the domain and the time of entry was $s$.
The determinant of the Jacobian for this transformation is the inverse of
\begin{equation}
\det \frac{\partial \Phi_{s,t}(\xi)}{\partial (s,\xi)}
=
-e^{-\int_s^t \nabla\cdot u_r\left(\Phi_{t,r}(x)\right) \dd r}u_s(\xi)\cdot n(\xi)
\end{equation}
by Proposition~\ref{p:dxPhi2} and the additional factor of $-u_s(\xi)\cdot n(\xi)$ comes
from replacing the \sX direction perpendicular to $\Gamma_\mathrm{in}$ with $s$ ($\xi$ lives in the $d-1$ dimensional
manifold $\Gamma_\mathrm{in}$) so
\begin{multline}
\int_0^t \left<\nabla\cdot F,\widetilde{A}^{t,s}[c]I_\mathrm{bdry}(s)\right>\dd s =\\
  -\int_\sX \int_\sY \left(\mathcal{S}^t \nabla\cdot F\right)(x,y) 
  e^{\int_s^t \nabla\cdot u_r\left(\Phi_{t,r}(x)\right) \dd r}\left(u_{s(x)}(\xi(x))\cdot n(\xi(x))\right)^{-1}
  I_\mathrm{bdry}(s(x),\xi(x),\dd y)\dd x\\
  = \left<\mathcal{S}^t \nabla\cdot F,\nu_t\right>
\end{multline}
where
\begin{equation}
\nu_t(x,\dd y)=
  e^{\int_s^t \nabla\cdot u_r\left(\Phi_{t,r}(x)\right) \dd r}\left(u_{s(x)}(\xi(x))\cdot n(\xi(x))\right)^{-1}
  I_\mathrm{bdry}(s(x),\xi(x),\dd y).
\end{equation} 
Writing
\begin{equation}
\mathcal{S}^t \nabla\cdot F =
\nabla\cdot\left(\mathcal{S}^t F\right) - \left(\nabla\cdot\mathcal{S}^t\right) F
\end{equation}
and checking that $\nu_t \in \Mbv$ concludes the proof.
\end{proof}

To simplify the remainder of this section it will be assumed that $\sX = [0,L)\times\Gamma_\mathrm{in}$
for some $L>0$, that is, that \sX is a
something rather like a cylinder.  The results are expected to generalise, but this assumption avoids
introducing technical conditions on \sX.  In particular
$x\in\sX$ can be written as $(x_1,x_2,\cdots, x_d)$ for $x_1\in[0,L)$ and
$(x_2,\cdots,x_d)\in \Gamma_\mathrm{in}$.

\begin{prop}\label{p:bdrycon}
Assume H1 and I3 hold, $c\in L^\infty\left([0,\infty),\Moi\right)$ solves \eqref{eq:smol} with additionally
$c\in W^{1,\infty}\left([0,\infty)\times\sX,\mathcal{M}(\sY)_\mathrm{TV}\right)$.  Let $n(x)$ be the outward 
normal on $\Gamma_\mathrm{in}$, then
\begin{equation*}
u_{1,t}(x)c(t,x,\dd y) = -u_t(x)\cdot n(x) c(t,x,\dd y) = I_\mathrm{bdry}(t,x,\dd y)\quad \forall t\in\RRP,\ x\in\Gamma_\mathrm{in}.
\end{equation*}
\end{prop}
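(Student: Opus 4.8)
The plan is to insert into the weak equation \eqref{eq:smol} test functions supported in a thin slab adjacent to $\Gamma_\mathrm{in}$, integrate the transport term by parts, and let the slab thickness tend to zero so that only the two boundary contributions — the flux $u_t\cdot n\,c_t$ across $\Gamma_\mathrm{in}$ and the boundary inception $I_\mathrm{bdry}$ — survive.

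Fix $\psi\in\mathcal{B}_\mathrm{b}(\sY)$, a differentiable $g$ on $\Gamma_\mathrm{in}$, and a smooth cut-off $\eta$ on $[0,\infty)$ with $\eta(0)=1$ and $\mathrm{supp}\,\eta\subset[0,1)$; for $0<\delta<L$ set $f_\delta(x,y)=\eta(x_1/\delta)\,g(x_2,\dots,x_d)\,\psi(y)$. Then $\mathrm{supp}\,f_\delta\subset\{x_1<\delta\}$, which is disjoint from $\Gamma_\mathrm{out}$, and $\nabla f_\delta\in B^d$ for each fixed $\delta$, so $f_\delta\in\Wopb$ and is admissible in \eqref{eq:smol}. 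By Theorem~\ref{t:globdens} the density of $c_t$ lies in $L^\infty(\sX,\mathcal{M}(\sY)_\mathrm{TV})$, and under the hypothesis of the present proposition it lies in $W^{1,\infty}$; hence for each fixed $\psi$ the scalar field $x\mapsto\int_\sY\psi(y)\,c(t,x,\dd y)$ belongs to $W^{1,\infty}(\sX)$ and the divergence theorem on the cylinder $\sX=[0,L)\times\Gamma_\mathrm{in}$ gives
\begin{equation*}
\left<u_t\cdot\nabla f_\delta,c_t\right>=\int_{\partial\sX}\!\int_\sY f_\delta(x,y)\bigl(u_t(x)\cdot n(x)\bigr)c(t,x,\dd y)\,\dd\sigma(x)-\left<f_\delta,\nabla\cdot(u_t c_t)\right>,
\end{equation*}
with $\nabla\cdot(u_t c_t)=(\nabla\cdot u_t)c_t+u_t\cdot\nabla c_t\in L^\infty(\sX,\mathcal{M}(\sY)_\mathrm{TV})$ by the velocity-field assumptions of \S\ref{s:flowass}. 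On $\Gamma_\mathrm{out}$ the test function vanishes and on $\Gamma_\mathrm{side}$ one has $u_t\cdot n=0$, so the surface integral reduces to an integral over $\Gamma_\mathrm{in}$ only.

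Substituting this identity into \eqref{eq:smol} and decomposing $I_t$ via I2 yields, for every $t>0$,
\begin{multline*}
\deriv{t}\left<f_\delta,c_t\right>=\int_{\Gamma_\mathrm{in}}\!\int_\sY f_\delta\bigl(u_t\cdot n\bigr)c(t,\xi,\dd y)\dd\xi+\int_{\Gamma_\mathrm{in}}\!\int_\sY f_\delta\,I_\mathrm{bdry}(t,\xi,\dd y)\dd\xi\\
-\left<f_\delta,\nabla\cdot(u_t c_t)\right>+\left<f_\delta,I_\mathrm{int}(t)\right>+(\text{coagulation term}).
\end{multline*}
Now let $\delta\downarrow0$. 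The function $f_\delta$ is uniformly bounded and supported on a subset of $\sX$ of Lebesgue measure $O(\delta)$, so, paired against the $x$-wise $L^\infty$ objects $\nabla\cdot(u_t c_t)$, $I_\mathrm{int}(t)\in\Moi$, the coagulation term (bounded by H1, $K_\infty$ and $\opnorm{c}_{B^\star}^2$ together with the support restriction), and $\partial_t c_t$ — using the $W^{1,\infty}$ time-regularity of $c$ so that $\deriv{t}\left<f_\delta,c_t\right>=\left<f_\delta,\partial_t c_t\right>$ for a.e. $t$ — each of these terms is $O(\delta)$ and drops out. The two boundary integrals are already independent of $\delta$ because $\eta(0)=1$, so for a.e. $t$ and all admissible $g,\psi$
\begin{equation*}
\int_{\Gamma_\mathrm{in}}\!\int_\sY g(\xi)\psi(y)\Bigl[\bigl(u_t(\xi)\cdot n(\xi)\bigr)c(t,\xi,\dd y)+I_\mathrm{bdry}(t,\xi,\dd y)\Bigr]\dd\xi=0.
\end{equation*}
Varying $g$ over differentiable functions on $\Gamma_\mathrm{in}$ and $\psi$ over a countable separating family in $\mathcal{B}_\mathrm{b}(\sY)$ forces $\bigl(u_t(\xi)\cdot n(\xi)\bigr)c(t,\xi,\cdot)=-I_\mathrm{bdry}(t,\xi,\cdot)$ for a.e. $\xi\in\Gamma_\mathrm{in}$, which extends to all $t$ and $\xi$ by continuity of the $W^{1,\infty}$-representative of $c$ and of $I_\mathrm{bdry}$ (I3). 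Since $n=(-1,0,\dots,0)$ on $\Gamma_\mathrm{in}$ for the cylinder, $u_t\cdot n=-u_{1,t}$ there and the claim follows.

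The step I expect to be the main obstacle is making the integration by parts rigorous in the measure-valued setting: one has to verify that the $W^{1,\infty}$ density admits a well-defined trace on $\Gamma_\mathrm{in}$, that the divergence theorem applies after scalarising in $y$ against $\psi\in\mathcal{B}_\mathrm{b}(\sY)$, and — the most delicate point — that the three pieces $\Gamma_\mathrm{in}$, $\Gamma_\mathrm{side}$, $\Gamma_\mathrm{out}$ of $\partial\sX$ contribute exactly the stated terms, for which the cylinder hypothesis on $\sX$ and the sign conditions defining the boundary decomposition are essential. The passage $\delta\downarrow0$ is then routine, given the uniform bounds on the coagulation operator and the densities established earlier.
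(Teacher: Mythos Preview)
Your argument is correct and follows essentially the same idea as the paper: insert a test function supported in a slab $\{x_1<\epsilon\}$ and let the thickness tend to zero so that only the $\Gamma_\mathrm{in}$ flux and the $I_\mathrm{bdry}$ contribution survive. The paper's proof is a one-line sketch using the explicit hat function $f(x)=\frac{\epsilon-x_1}{\epsilon}\II\{x_1\le\epsilon\}$ and relies on the direct computation $\partial_{x_1}f=-\epsilon^{-1}\II\{x_1\le\epsilon\}$, so that $\langle u_t\cdot\nabla f,c_t\rangle$ is an $\epsilon$-slab average converging to the boundary value by the assumed $W^{1,\infty}$ regularity of $c$; you instead integrate by parts via the divergence theorem, which trades the averaging step for a trace argument. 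Both routes are equivalent here, and your inclusion of the transverse factor $g(x_2,\dots,x_d)$ and the $\sY$-factor $\psi$ makes explicit the localisation that the paper's sketch leaves implicit.
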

\begin{proof}
Consider \eqref{eq:smol} with $f(x_1,x_2,\dotsc,x_d)= \frac{\epsilon - x_1}{\epsilon}\II\left\{x_1\leq \epsilon\right\}$ as
$\epsilon\rightarrow 0$.
\end{proof}

The existence of a one or more inverses to the divergence operator is necessary to avoid making
statements about an empty set of functions in the remainder of this section.
\begin{prop}\label{p:divinv}
Let $f\in D$, then there exists $g\in D^d$ such that $\nabla\cdot g \equiv f$ and
$g\cdot n = 0$ on $\Gamma_\mathrm{side}$, where $n$ is the outward normal.
\end{prop}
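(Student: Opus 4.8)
The plan is to exploit the cylinder structure $\sX=[0,L)\times\Gamma_\mathrm{in}$ and simply antidifferentiate $f$ along the axial direction. Write a point of $\sX$ as $x=(x_1,x')$ with $x_1\in[0,L)$ and $x'\in\Gamma_\mathrm{in}$, so that $\Gamma_\mathrm{out}$ is reached as $x_1\to L$ while $\Gamma_\mathrm{side}=[0,L)\times\partial\Gamma_\mathrm{in}$, on which the outward normal $n(x)$ is orthogonal to the $x_1$-axis, i.e.\ has vanishing first component. Given $f\in\Wopb$, set $g=(g_1,0,\dots,0)$ with $g_1(x,y):=-\int_{x_1}^{L}f\big((s,x'),y\big)\,\dd s$. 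Since $f\in\Wopb$ is differentiable with bounded gradient, $s\mapsto f((s,x'),y)$ is Lipschitz, hence continuous, so the fundamental theorem of calculus gives $\pderiv{x_1}g_1=f$ everywhere; as the other components vanish this yields $\nabla\cdot g\equiv f$.

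It then remains to verify $g\in\Wopb^d$, which reduces to $g_1\in\Wopb$ (the zero components lie in $\Wopb$ trivially). Boundedness is clear from $\normoi{g_1}\le L\,\normoi{f}$. The transverse partials $\pderiv{x_i}g_1$ for $i\ge2$ are obtained by differentiating under the integral sign --- justified because $\nabla f\in B^d$ dominates the difference quotients --- and are bounded by $L\,\normoid{\nabla f}$, while $\pderiv{x_1}g_1=f\in B$; hence $\nabla g_1\in B^d$. The outflow condition follows from $\normi{g_1(x,\cdot)}\le\int_{x_1}^{L}\normi{f((s,x'),\cdot)}\,\dd s\le(L-x_1)\,\normoi{f}\to0$ as $x_1\to L$, i.e.\ as $x\to\Gamma_\mathrm{out}$; note that one does not even need the decay of $f$ itself here. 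Finally, on $\Gamma_\mathrm{side}$ the normal has vanishing first component and $g_2=\dots=g_d=0$, so $g\cdot n=g_1n_1=0$, and nothing is demanded on $\Gamma_\mathrm{in}$ or $\Gamma_\mathrm{out}$.

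There is no genuine obstacle here: the construction is elementary, and the only steps needing a word of justification are the differentiation under the integral sign (covered by $\nabla f\in B^d$) and the vanishing of $g_1$ at $\Gamma_\mathrm{out}$ (covered by boundedness of $f$ together with the shrinking domain of integration). It is worth remarking --- and this is exactly why the statement refers to ``one or more'' inverses --- that $g$ is far from unique: adding any element of $\Wopb^d$ with vanishing divergence and vanishing normal trace on $\Gamma_\mathrm{side}$ produces another admissible field, and for $d\ge2$ such fields are plentiful.
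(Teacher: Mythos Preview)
Your construction is exactly the paper's: set $g=(g_1,0,\dots,0)$ with $g_1(x,y)=-\int_{x_1}^L f((s,x'),y)\,\dd s$, and observe that the remaining components vanish. You have supplied considerably more verification (membership in $\Wopb$, differentiation under the integral, the outflow limit, the side-boundary condition) than the paper, which essentially just writes down the formula, and your remark on non-uniqueness matches the paper's own comment following the proof.
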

\begin{proof}
Take $g(x_1,x_2,\dotsc,x_d,y)=(g_1,g_2,\dotsc,g_d)$ where
\begin{equation}
g_1(x_1,x_2,\dotsc,x_d,y) = -\int_{x_1}^L f(\xi,x_2,\dotsc,x_d,y)\dd \xi
\end{equation}
and $g_i\equiv 0$ for $i>1$.
This construction has a natural generalisation in terms of path integrals.  It is not important
exactly which end point on $\Gamma_\mathrm{out}$ is chosen because $f=0$ all along this boundary.
\end{proof}
This representation is not in general unique.  Consider for example the case where $f=0$ on
$\Gamma_\mathrm{side}$ and take integrals along lines perpendicular to the direction used
in the above proof.

If $c$ solves \eqref{eq:smol}, $f\in\Wopb$ and $g\colon\sX\rightarrow C_\mathrm{b}(\sY)^d$ is
differentiable with $\nabla \cdot g \equiv f$ and $g\cdot n = 0$ on $\Gamma_\mathrm{side}$ for
normal vectors $n$,
then applying the divergence theorem to \eqref{eq:smol}
(at this stage in a purely formal calculation) suggests
\begin{multline}\label{eq:divc}
\deriv{t}\int_{\sX\times\sY} g\cdot \nabla c \,\dd x \dd y=
\int_{\sX\times\sY} u_t^\top(\nabla g)\nabla c \,\dd x \dd y
- \int_{\sX\times\sY} g^\top (\nabla u_t) \nabla c \,\dd x \dd y \\
- \int_{\sX\times\sY} g \cdot \nabla (\nabla \cdot u_t) c \,\dd x \dd y
+ \int_{\sX\times\sY} g \cdot \nabla I_\mathrm{int} \,\dd x \dd y\\
+ \int_{\Gamma_\mathrm{in}\times\sY} g \cdot n \pderiv{t}c \,\dd x \dd y
- \int_{\Gamma_\mathrm{in}\times\sY} u_t^\top (\nabla g) n c \,\dd x \dd y
- \int_{\Gamma_\mathrm{in}\times\sY} g \cdot n I_\mathrm{int} \dd x \dd y \\
- \int_{\Gamma_\mathrm{in}\times\sY} \nabla\cdot g I_\mathrm{bdry} \,\dd x \dd y
+ \int_{\Gamma_\mathrm{in}\times\sY} g^\top (\nabla u_t) n c \,\dd x \dd y\\
+\frac12 \int_{\sX\times\sY} \int_{\Gamma_\mathrm{in}\times\sY}K(y,y_2)g(x,y+y_2)\cdot n(x) \left( h(x,x_2)
  c_t(x,\dd y)\right)c_t(x_2,\dd y_2) \dd x_2\dd x\\
-\frac12 \int_{\sX\times\sY} \int_{\sX\times\sY}K(y,y_2)g(x,y+y_2)\cdot \nabla\left( h(x,x_2)
  c_t(x,\dd y)\right)c_t(x_2,\dd y_2) \dd x_2\dd x\\
-\frac12 \int_{\sX\times\sY} \int_{\Gamma_\mathrm{in}\times\sY}K(y,y_2) g(x,y)\cdot n(x)
  \left(\left(h(x,x_2)+h(x_2,x)\right) c_t(x,\dd y)\right)c_t(x_2,\dd y_2) \dd x_2\dd x\\
+\frac12 \int_{\sX\times\sY} \int_{\sX\times\sY}K(y,y_2)g(x,y)\cdot
  \nabla\left(\left(h(x,x_2)+h(x_2,x)\right)c_t(x,\dd y)\right)c_t(x_2,\dd y_2) \dd x_2\dd x.
\end{multline}

\begin{defn}
Define a norm on $\Mi^d$, which by a slight abuse of notation will also be referred to as the $\Lpb^\star$-norm by setting
$\norm{\mu}_{\Lpb^\star}=\sup_{f\in\Lpb^d\colon \normoi{f}=1} \abs{\left<f,\mu\right>}$,
for $\mu \in \Mi^d$.
\end{defn}

Conditions are now provided to make \eqref{eq:divc} rigorous, first by restricting the test functions to
the interior of the domain, so that the boundary terms can be ignored and then proceeding to more general
test functions:
\begin{prop}\label{p:derivgen}
Assume H2 and I3 hold and that $c\in L^\infty\left([0,\infty),\Moi\right)$ solves
\eqref{eq:smol}.  Suppose further that $c_t\in\Mbv$ for each $t$, so that there
exist vector measures $\nu_t$ of finite total variation such that
$\left<\nabla\cdot f, c_t\right> =-\left<f,\nu_{t}\right>$ for all
$f\in \Wopb^d$.  This means that $\nu \in L^\infty\left([0,\infty),(\Mi^d,\norm{\cdot}_{B^\star})\right)$
and in particular for all
$g\in C^1_\mathrm{K}\left(\sX^\circ,\mathcal{B}_\mathrm{b}(\sY)^d\right)$, the space of once continuously
differentiable functions with compact support strictly contained in the interior $\sX^\circ$ of \sX that also
satisfy $\nabla \cdot g \in D$
(for example $g\in C^2_\mathrm{K}\left(\sX^\circ,\mathcal{B}_\mathrm{b}(\sY)^d\right)$)
one has
\begin{multline*}
\deriv{t}\left<g,\nu_{t}\right>=
\left<(\nabla g)^\top u_t ,\nu_{t}\right>-\left<(\nabla u_t)^\top g,\nu_{t}\right>-
\left<g\cdot \nabla\left(\nabla\cdot u\right),c_t\right>
+\left<g,\nabla I_{\mathrm{int},t}\right> +\left<\widehat{{H}_t[c]}g,\nu_{t}\right>\\
+\frac12 \int_{\sX\times\sY} \int_{\sX\times\sY}g(x,y+y_2) K(y,y_2)\widehat{c_t}(x,\dd y)
  \cdot\left(\nabla_x h(x,\xi)(\dd x)\right)c_t(\xi,\dd y_2)\dd \xi\\
- \frac12 \int_{\sX\times\sY}\int_{\sX\times\sY}g(x,y)K(y,y_2)  \widehat{c_t}(x,\dd y)
    \cdot\left(\nabla_x \left[h(x,\xi)+h(\xi,x)\right](\dd x)\right)c_t(\xi,\dd y_2)\dd \xi.
\end{multline*}
Here $\widehat{{H}_t[c]}$ is a bounded linear operator mapping $\Lpb \rightarrow \Lpb$ acting
componentwise on $g$ with
$\norm{\widehat{H_t[c]}}_{\Lpb\rightarrow\Lpb} = \norm{H_t[c]}_{\Lpb\rightarrow\Lpb}$
and $\widehat{H_t[c]}g(x,y) = H_t[c]g(x,y)$ (recall $H_t$ is specified in Definition~\ref{d:Htstar})
for all $y$ and all $x$ except possibly $x$ at which $h$ and
$c$ both have discontinuities, which is a set of ($\RR^d$-Lebesgue) measure 0.   Similarly
$\normmoi{\widehat{c}}=\normmoi{{c}}$ with possible differences between $c$ and $\widehat{c}$
on the same set of measure 0.
Because $h$ is only assumed to be of bounded variation, it only has a weak derivative; in the case of
the weak derivative with respect to the first argument this is written $\nabla_x h(x,\xi)(\dd x)$. 
\end{prop}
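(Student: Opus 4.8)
The plan is to put $f=\nabla\cdot g$ into the weak formulation~\eqref{eq:smol}, which $c$ satisfies --- this is admissible since $\nabla\cdot g\in\Wopb$ by hypothesis --- and then to move every \sX-derivative off $g$.  Because $g$ has compact support in $\sX^\circ$, all the boundary integrals occurring in the formal identity~\eqref{eq:divc} vanish, so only the ``interior'' terms remain, and these are exactly the right-hand side asserted.  Before this I would secure the uniform bound $\nu\in L^\infty\bigl([0,\infty),(\Mi^d,\norm{\cdot}_{B^\star})\bigr)$ by tracking constants through the proof of Proposition~\ref{p:solnbv} applied to the representation~\eqref{e:measrep} of $c$: every propagator and kernel arising there vanishes once $t-s>t_0$, so the exponential factors (and hence the bounds of Propositions~\ref{p:Snorm} and~\ref{p:divS}) are uniform in time, and together with the time-uniform bounds on $c_0$ and on $I_\mathrm{int},I_\mathrm{bdry}$ from I3 this gives $\sup_t\norm{\nu_t}_{B^\star}<\infty$.

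For the identity itself I would start from $\deriv{t}\left<\nabla\cdot g,c_t\right>=\left<u_t\cdot\nabla(\nabla\cdot g),c_t\right>+\left<\nabla\cdot g,I_t\right>+\left<H_t[c](\nabla\cdot g),c_t\right>$, valid because $c$ solves~\eqref{eq:smol} and $t\mapsto\left<\nabla\cdot g,c_t\right>$ is differentiable (Theorem~\ref{t:locexist}); as $g\in\Wopb^d$ vanishes near $\partial\sX$ the left side equals $-\deriv{t}\left<g,\nu_t\right>$.  The inception term is immediate: $g$ misses $\Gamma_\mathrm{in}$, so only the $I_\mathrm{int}$ part of $I_t$ enters and one integration by parts gives $\left<\nabla\cdot g,I_t\right>=-\left<g,\nabla I_{\mathrm{int},t}\right>$, using I3 for the \sX-derivative.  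For the transport term I would use $u_t\cdot\nabla(\nabla\cdot g)=\nabla\cdot\bigl((\nabla g)^\top u_t\bigr)-\sum_{i,j}(\partial_i u_{j,t})(\partial_j g_i)$: one integration by parts of the first summand yields $\left<(\nabla g)^\top u_t,\nu_t\right>$, and a second integration by parts of the remaining summand splits it into $\left<(\nabla u_t)^\top g,\nu_t\right>$ and $\left<g\cdot\nabla(\nabla\cdot u),c_t\right>$; no boundary terms appear, only $c_t\in\Mbv$, $u\in C^2$ and smoothness of $g$ are needed, and this reproduces the transport and inception terms of the claim, the index bookkeeping being routine.

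The substantive step, and the one place H2 is used, is the coagulation term $\left<H_t[c](\nabla\cdot g),c_t\right>$.  Writing $c_t(\dd x,\dd y)=\widehat c_t(x,\dd y)\,\dd x$ with $\widehat c_t$ the precise representative of the density, and integrating by parts in the position variable of the first factor of each of the two double integrals of Definition~\ref{d:Htstar}, the derivative falls on $x\mapsto h(x,x_2)\widehat c_t(x,\dd y)$ and on $x\mapsto\bigl(h(x,x_2)+h(x_2,x)\bigr)\widehat c_t(x,\dd y)$.  By H2 the maps $h(\cdot,x_2)$ and $h(x_2,\cdot)$ are special functions of bounded variation with a uniformly bounded number of jump surfaces, so the Leibniz rule for $BV$ functions applies and each such weak derivative equals $\widehat h\,\nabla_x\widehat c_t+\widehat c_t\,\nabla_x h(\dd x)$ with $\widehat h$ the precise representative of $h$.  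After identifying $\nabla_x\widehat c_t\,\dd x$ with $\nu_t$, the $\widehat h\,\nabla_x\widehat c_t$ parts of the two integrals recombine into $-\left<\widehat{H_t[c]}g,\nu_t\right>$, where $\widehat{H_t[c]}$ is $H_t[c]$ built from $\widehat h$ and applied componentwise, and the $\widehat c_t\,\nabla_x h(\dd x)$ parts are exactly the last two terms of the claim; their finiteness follows from $K_\infty<\infty$, the time-uniform $\Moi$-bound on $c$, and $\sup_{x_2}\abs{\nabla_x h(\cdot,x_2)}(\sX)<\infty$, which H2 provides since each $\chi_{j,1}$ has an $L^1$ weak derivative plus finitely many atoms and each $\chi_{j,2}$ is bounded.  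Substituting the three pieces into $-\deriv{t}\left<g,\nu_t\right>=-\left<u_t\cdot\nabla(\nabla\cdot g),c_t\right>-\left<\nabla\cdot g,I_t\right>-\left<H_t[c](\nabla\cdot g),c_t\right>$ and rearranging gives the displayed equation, and $\norm{\widehat{H_t[c]}}_{\Lpb\rightarrow\Lpb}=\norm{H_t[c]}_{\Lpb\rightarrow\Lpb}$, $\normmoi{\widehat c}=\normmoi{c}$ hold because the pairs agree off a Lebesgue-null set.

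The hard part is licensing the $BV$ Leibniz rule where the jump set $J_h$ of $h$ meets the jump set $J_{c_t}$ of $c_t$ --- the latter lying, by the structure exhibited in Proposition~\ref{p:solnbv}, on the flow manifolds $\{x:\Phi_{t,s}(x)\in\Gamma_\mathrm{in}\}$.  The resolution I have in mind is that $J_h\cap J_{c_t}$ is $(d-1)$-rectifiable, hence $\RR^d$-Lebesgue null, and a direct computation with one-sided traces along a coinciding jump surface shows $\widehat h\,\nabla_x\widehat c_t+\widehat c_t\,\nabla_x h(\dd x)$ already accounts for the full jump part of $\nabla_x(h\widehat c_t)$ there; so the formula is valid everywhere, and the only effect of passing to precise representatives is that $\widehat{H_t[c]}$ and $\widehat c$ differ from $H_t[c]$ and $c$ precisely on the Lebesgue-null set $J_h\cap J_{c_t}$, which is exactly the caveat recorded in the statement.
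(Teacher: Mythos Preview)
Your approach is correct and is essentially the same as the paper's: substitute $f=\nabla\cdot g$ into \eqref{eq:smol}, observe that the boundary integrals in \eqref{eq:divc} vanish because $g$ is compactly supported in $\sX^\circ$, and handle the coagulation term via the Leibniz rule for products of BV functions.  The paper's proof is extremely terse --- it records only these two points and then cites \cite[Theorem~3.96 \& Example~3.97]{Amb00} for the BV product rule --- whereas you have spelled out the transport-term identity, the inception term, and the structure of the coagulation integration by parts, and you have also addressed the time-uniform bound on $\nu$, which the paper's proof passes over in silence.

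One small sharpening of your last paragraph: the reason $\widehat h\,\nabla_x\widehat c_t+\widehat c_t\,\nabla_x h(\dd x)$ captures the full jump of $\nabla_x(h\widehat c_t)$ at a coinciding jump surface is precisely that the precise representatives (averages of the one-sided traces) satisfy the product rule \emph{without} residual correction; this is exactly what the cited result in \citep{Amb00} provides, and in one dimension it reduces to choosing suitable left-/right-continuous versions.  Your ``direct computation with one-sided traces'' would reproduce this, but it is cleaner to invoke the theorem and thereby pin down what $\widehat c$ and $\widehat{H_t[c]}$ must be.
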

\begin{proof}
The boundary integrals on
$\Gamma$ vanish because $g$ is zero here.  Note that the product of two functions of bounded variation
($c$ and the $h$ in the definition of $H$)
is itself of bounded variation, but the Leibniz product rule for differentiation has to be adapted slightly at
points where both  are discontinuous (yielding $\widehat{H}$ and $\widehat{c}$). The details follow
from \citep[Theorem~3.96 \& Example~3.97]{Amb00}.  In one dimension this amounts to adjustments
to give left or right continuity at the jump points.
\end{proof}
The terms in the preceding expression can be grouped as follows ($c$ is in this context known):
\begin{itemize}
\item Transport $\left<u_t^\top\nabla g,\nu_t\right>=\left<U_t g, \nu_{t}\right>$ (note $\nabla g$ is a matrix),
\item linear reactions $-\left<g \nabla u_t,\nu_{t}\right> +\left<\widehat{H_t[c]}g,\nu_{t}\right> $,
\item source terms, which are collected as a vector measure $\widehat{J}_{t}[c]$ so that,
for $g\in C_\mathrm{K}\left(\sX^\circ,\mathcal{B}_\mathrm{b}(\sY)^d\right)$
\begin{multline}\label{e:jint}
\left<g,\widehat{J}_{t}[c]\right>=
-\left<g\cdot \nabla\left(\nabla\cdot u\right),c_t\right>
+\left<g,\nabla I_{\mathrm{int},t}\right> \\
+\frac12 \int_{\sX\times\sY} \int_{\sX\times\sY}g(x,y+y_2)K(y,y_2)
     \widehat{c_t}(x,\dd y)\left(\nabla_x h(x,\xi)(\dd x)\right)c_t(\xi,\dd y_2) \dd \xi\\
- \frac12 \int_{\sX\times\sY}\int_{\sX\times\sY}g(x,y)K(y,y_2)
     \widehat{c_t}(x,\dd y) \left(\nabla_x \left[h(x,\xi)+h(\xi,x)\right](\dd x)\right)c_t(\xi,\dd y_2) \dd \xi.
\end{multline}
\end{itemize}
This characterisation is however limited to functions with compact support in the interior of \sX.  It
can only give information about how a solution changes within \sX, it says nothing about what might
happen on $\Gamma_\mathrm{in}$.  Including the boundary terms in the integration by parts/Gauss
Theorem used for Proposition~\ref{p:derivgen} yields the following additional terms.  That these are
the correct additional terms is part of the assertion of Proposition~\ref{p:uniqderiv}.

\begin{defn}\label{d:derivsrc}
Let $c\in L^\infty\left([0,\infty),\Moi\right)$ and define a vector measure $J_t[c]$
on $\sX\times\sY$ by
\begin{multline*}
\left<g,J_t[c]\right> = \left<g,\widehat{J}_t[c]\right> +\\
\int_{\Gamma_\mathrm{in}\times\sY}g(x,y)\cdot n(x)  
  \left(\pderiv{t} \frac{I_\mathrm{bdry}(t,x,\dd y)}{u_t(x)\cdot n(x)} +I_\mathrm{int}(t,x,\dd y)
          -\nabla_\mathrm{\Gamma}\cdot\left(\frac{u_t(x)I_\mathrm{bdry}(t,x,\dd y)}{u_t(x)\cdot n(x)}\right)
          \right)\dd x\\
-\frac12 \int_{\sX\times\sY} \int_{\Gamma_\mathrm{in}\times\sY}K(y,y_2) g(x,y)\cdot n(x)
  \left(\left(h(x,x_2)+h(x_2,x)\right) \right)\frac{I_\mathrm{bdry}(t,x,\dd y)}{u_t(x)\cdot n(x)}c_t(x_2,\dd y_2) \dd x_2\dd x\\
          +\int_{\Gamma_\mathrm{in}\times\sY}g(x,y)\cdot \nabla_\mathrm{\Gamma}I_\mathrm{bdry}(t,x,\dd y)\dd x
          - \int_{\Gamma_\mathrm{in}\times\sY}\frac{I_\mathrm{bdry}(t,x,\dd y)}{u_t(x)\cdot n(x)}
          g(x,y)\cdot \left(\nabla_\mathrm{\Gamma}n(x)\right)^\top u_t(x) \dd x
\end{multline*}
for $g \in \Lpb^d$ and
where $\nabla_\mathrm{\Gamma}$ is the derivative restricted to directions perpendicular to $n(x)$. 
Under the assumptions on \sX set out above $\nabla_\Gamma = (0,\pderiv{x_2},\dotsc,\pderiv{x_d})$.
\end{defn}

\begin{defn}
For $c\in L^\infty\left([0,\infty),\Moi\right)$ define time dependent linear operators $\widetilde{G}_t[c]$
on $\Mi^d$ by $\left<g,\widetilde{G}_{t}[c]\nu\right> = - \left<g\cdot\nabla u_t,\nu\right> + \left<\widehat{H_t[c]}g,\nu\right>$ for all $g \in \Lpb^d$.
\end{defn}

One can now compactly rewrite the equation from Proposition~\ref{p:derivgen} as (compare \eqref{eq:Atstar})
\begin{equation}\label{e:deriv}
\deriv{t}\left<g,\nu_t\right>=\left<g,\widetilde{U}_t \nu_t\right>+\left<g,\widetilde{G}_t[c]\nu_t\right>+\left<g,J_t[c]\right>
\end{equation}
for all $g\in\Lpb^d$ such that $\nabla\cdot g \in D$.
The additional terms introduced in Definition~\ref{d:derivsrc} are not seen by the smaller class of test
functions used in Proposition~\ref{p:derivgen}.

\begin{prop}
Assume H1 holds and that $c\in L^\infty\left([0,\infty),\Moi\right)$,
then there is a strongly continuous, bounded 
propagator $\widetilde{V}^{t,s}[c]$ on $\Mi^d$ with
\begin{equation*}
\norm{\widetilde{V}^{t,s}[c]}_{\Mi^d\rightarrow\Mi^d} \leq e^{\left(\frac32 K_\infty \Cho \opnorm{c}_{\Lpb^\star}+\opnorm{\nabla u}\right)\min(t-s,t_0)}
\end{equation*}
and for $f\in\Wopb$, $\mu \in\Mi^d$
\begin{equation*}
\deriv{t}\left<f,\widetilde{V}^{t,s}[c]\mu\right>=\left<f,\left(\widetilde{U}_t + \widetilde{G}_t[c]\right)\widetilde{V}^{t,s}[c]\mu\right>.
\end{equation*}

\end{prop}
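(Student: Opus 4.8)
The approach mirrors the construction of the propagator $\widetilde A^{t,s}[\mu]$ in Definition~\ref{d:Asprop}: build $\widetilde V^{t,s}[c]$ as the pre-dual of a Dyson (perturbation) series obtained by perturbing the transport propagator $U^{r,t}$ by the \emph{bounded} operator dual to $\widetilde G_t[c]$. Concretely, let $G_t[c]\colon\Lpb^d\to\Lpb^d$ be the operator dual to $\widetilde G_t[c]$ under the pairing $\langle g,\nu\rangle=\sum_i\int g_i\,\dd\nu_i$, namely $G_t[c]g = -(\nabla u_t)^\top g + \widehat{H_t[c]}g$ with $\widehat{H_t[c]}$ acting componentwise. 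The first term is multiplication by the matrix field $-\nabla u_t$, so $\normoid{(\nabla u_t)^\top g}\le\opnorm{\nabla u}\,\normoid{g}$ by the velocity-field assumptions of \S\ref{s:flowass}; the second has $\Lpb\to\Lpb$ norm equal to $\norm{H_t[c]}_{\Lpb\to\Lpb}\le\tfrac32 K_\infty\Cho\opnorm{c}_{\Lpb^\star}$ by Proposition~\ref{p:Hbound} and the norm identity recorded in Proposition~\ref{p:derivgen}. Thus $G_t[c]$ is bounded on $\Lpb^d$ with $\norm{G_t[c]}_{\Lpb^d\to\Lpb^d}\le b:=\opnorm{\nabla u}+\tfrac32 K_\infty\Cho\opnorm{c}_{\Lpb^\star}$, uniformly in $t$.

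Next I would set, for $0\le r\le t$,
\[
V^{r,t}[c] := U^{r,t} + \sum_{m=1}^{\infty}\int_{r\le s_1\le\cdots\le s_m\le t} U^{r,s_1}G_{s_1}[c]\,U^{s_1,s_2}\cdots G_{s_m}[c]\,U^{s_m,t}\,\dd s_1\cdots\dd s_m
\]
on $\Lpb^d$, with $U^{r,t}$ acting componentwise; by Proposition~\ref{p:Ubound} it preserves \Wopb\ and satisfies $\norm{U^{r,t}}_{\Lpb\to\Lpb}\le\II(t-r\le t_0)$. Because $G_t[c]$, like $H_t[c]$, leaves the position variable untouched, the net transport produced by the $U$-factors in each term of the series is $\Phi_{r,t}$, so, exactly as in Propositions~\ref{p:zero} and \ref{p:ApropB}, the $m$-th term evaluated at $(x,y)$ is a multiple of $\norm{f(\Phi_{r,t}(x),\cdot)}_{\sY-\infty}$ and hence vanishes unless $\Phi_{r,t}(x)\in\sX$, while its $\Lpb^d$-norm is at most $b^m(t-r)^m/m!$. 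Summing gives absolute convergence together with $\norm{V^{r,t}[c]}_{\Lpb^d\to\Lpb^d}\le e^{b(t-r)}\II(t-r\le t_0)\le e^{b\min(t-r,t_0)}$, which on passing to the pre-dual is the asserted estimate.

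It then remains to record the propagator law, strong continuity, and the generator identity, all by the arguments already used for $A^{r,t}[\mu]$. The composition identity $V^{r,s}[c]V^{s,t}[c]=V^{r,t}[c]$ follows from the nested simplex structure of the series as in \citep[Theorem~2.9]{Kolok10b}; strong ($\Lpb^d$-norm) continuity of $t\mapsto V^{s,t}[c]f$ is inherited from that of $U$ and the uniform bound on $G$ exactly as in Proposition~\ref{p:ApropB}. Differentiating the series term by term and checking that the differentiated series is again absolutely convergent yields, for $f\in\Wopb^d$ (so that $U_tf\in\Lpb^d$ is defined), $\deriv{t}V^{s,t}[c]f = V^{s,t}[c]\bigl(U_t+G_t[c]\bigr)f$. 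Passing to pre-duals as in Definition~\ref{d:dualops} and Proposition~\ref{p:Adual}, the operator $\widetilde V^{t,s}[c]$ on $\Mi^d$ inherits the operator norm bound and, via the duality pairing, the stated identity $\deriv{t}\langle f,\widetilde V^{t,s}[c]\mu\rangle=\langle f,(\widetilde U_t+\widetilde G_t[c])\widetilde V^{t,s}[c]\mu\rangle$.

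The main obstacle is bookkeeping rather than analysis. One must confirm that $\widehat{H_t[c]}$, which differs from $H_t[c]$ only on a Lebesgue-null set of positions, nonetheless acts as a bounded operator on $\Lpb^d$ (equipped with the \emph{supremum} norm, not an essential supremum) with the norm recorded in Proposition~\ref{p:derivgen}, and that the matrix-multiplication term genuinely has $\Lpb^d$-operator norm $\le\opnorm{\nabla u}$ when $\nabla u_t$ is viewed as a linear map on $\RR^d$. The second delicate point is that, unlike in Proposition~\ref{p:ApropD}, the perturbation $G_t[c]$ does \emph{not} preserve $\Wopb^d$ (neither $\widehat H$ nor multiplication by $\nabla u_t$ is differentiability-preserving), so $V^{r,t}[c]$ need not preserve $\Wopb^d$ and one can only justify the term-by-term differentiation in $t$ in the weaker ``left generator'' sense of Proposition~\ref{p:ApropB}, which needs merely $(U_t+G_t[c])f\in\Lpb^d$; this is precisely why the proposition asserts only the $\deriv{t}$ identity and not the $\deriv{s}$ identity that Proposition~\ref{p:ApropD} provides for $\widetilde A$.
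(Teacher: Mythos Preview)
Your proposal is correct and follows exactly the route the paper intends: bound the dual generator $\widetilde G_t[c]$ (equivalently your $G_t[c]$) by $\tfrac32 K_\infty\Cho\opnorm{c}_{\Lpb^\star}+\opnorm{\nabla u}$ and then rerun the Dyson-series perturbation argument of Proposition~\ref{p:ApropB}. Your closing remarks on $\widehat{H_t[c]}$ and on why only the $\deriv{t}$ identity is available (because $G_t[c]$ need not preserve $\Wopb^d$) are accurate and go beyond what the paper records, but do not change the strategy.
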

\begin{proof}
This follows the same perturbation argument as Proposition~\ref{p:ApropB} since by duality\\
$\norm{\widetilde{G}_t[c]}_{\Mi^d\rightarrow\Mi^d} \leq \frac32 K_\infty \Cho \opnorm{c}_{\Lpb^\star}+\opnorm{\nabla u}$.
\end{proof}

\begin{prop}\label{p:uniqderiv}
Let I3 hold; assume further that either $d=1$, H2 holds and $\inf_{t,x} u_t(x)>0$ or H3 holds for general $d$;
assume further that $c\in L^\infty\left([0,\infty),\Moi\right)$,
then \eqref{e:deriv} has a unique solution
\begin{equation*}
\nu_t = \widetilde{V}^{t,0}[c]\nu_0 + \int_0^t \widetilde{V}^{t,s}[c] J_s[c] \dd s
\in C\left([0,\infty),\left(\Mi^d,\norm{\cdot}_{\Lpb^\star}\right)\right)
\end{equation*}
with initial condition $\nu_0$.
This solution is in $L^\infty\left([0,\infty),\Moi^d\right)$ provided $\nu_0\in\Moi^d$ and thus (identifying the
measure with its \sX-density) also in
$L^\infty\left([0,\infty)\times\sX,\mathcal{M}(\sY)_\mathrm{TV}^d\right)$.
\end{prop}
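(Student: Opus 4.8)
The plan is to recognise the displayed formula for $\nu_t$ as the Duhamel (variation-of-constants) representation attached to the linear non-autonomous equation~\eqref{e:deriv}, whose homogeneous part is governed by the propagator $\widetilde{V}^{t,s}[c]$ constructed above and whose forcing is $J_s[c]$, and then to proceed in three steps: show the representation is well defined and solves~\eqref{e:deriv}, obtain uniqueness by the standard propagator argument, and upgrade the solution from $\Mi^d$ to $\Moi^d$. For the first step one checks that $s\mapsto J_s[c]$ is bounded and measurable into $\left(\Mi^d,\norm{\cdot}_{B^\star}\right)$; every term of Definition~\ref{d:derivsrc} is controlled by the uniform bounds on $\nabla(\nabla\cdot u)$ and $\nabla u$, by $c\in L^\infty\left([0,\infty),\Moi\right)$ together with the bounded-variation (H2) or differentiability (H3) control on the derivatives of $h$ for the coagulation contributions, by $\nabla I_\mathrm{int}$ from I3 for the interior source, and on $\Gamma_\mathrm{in}$ by $\norm{I_\mathrm{bdry}(t,\xi,\cdot)}_{\sY-\mathrm{TV}}\le I_\ast\,u_t(\xi)\cdot n(\xi)$ from I2, the time and tangential derivative bounds on $I_\mathrm{bdry}$ from I3, and the regularity of $\Gamma_\mathrm{in}$ (which bounds $\nabla_\Gamma n$). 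Granting this, the Bochner integral defining $\nu_t$ exists in total variation, continuity of $t\mapsto\nu_t$ follows from the strong continuity and the operator-norm estimate of $\widetilde{V}^{t,s}[c]$ with dominated convergence, and differentiating the representation term by term --- using $\deriv{t}\left<g,\widetilde{V}^{t,s}[c]\mu\right>=\left<g,(\widetilde{U}_t+\widetilde{G}_t[c])\widetilde{V}^{t,s}[c]\mu\right>$ from the preceding proposition together with the fundamental theorem of calculus for the $s$-integral --- shows $\nu_t$ solves~\eqref{e:deriv}; this is the computation of \citep[\S5.1]{Pazy83} already carried out in Proposition~\ref{p:fixedsoln}.

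Uniqueness is the usual propagator argument: if $\nu^1,\nu^2\in C\left([0,\infty),\left(\Mi^d,\norm{\cdot}_{B^\star}\right)\right)$ both solve~\eqref{e:deriv} with the same initial datum, then $\delta_t:=\nu^1_t-\nu^2_t$ solves the homogeneous equation and, for $g$ in the test class of~\eqref{e:deriv}, the map $s\mapsto\left<g,\widetilde{V}^{t,s}[c]\delta_s\right>$ has vanishing derivative, hence is constant, so that $\left<g,\delta_t\right>=\left<g,\widetilde{V}^{t,0}[c]\delta_0\right>=0$ and $\delta_t=0$.

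For the $\Moi^d$ claim one first verifies, by the perturbation series of Proposition~\ref{p:densprop} applied componentwise, that $\widetilde{V}^{t,s}[c]$ preserves $\Moi^d$ with the operator-norm bound of the preceding proposition, using that the transport propagator preserves $\Moi^d$ with the Liouville bound of Proposition~\ref{p:dualseries} and that $\widetilde{G}_t[c]$ is bounded on $\Moi^d$; in particular $\widetilde{V}^{t,0}[c]\nu_0\in\Moi^d$ whenever $\nu_0\in\Moi^d$. The real difficulty is the source integral, because $J_s[c]$ is \emph{not} in $\Moi^d$: by Definition~\ref{d:derivsrc} it has a part concentrated on $\Gamma_\mathrm{in}$ coming from $I_\mathrm{bdry}$, and when $d=1$ under H2 the weak derivative of $h$ contributes finitely many atoms in \sX, so $J_s[c]$ carries a piece singular with respect to Lebesgue measure on \sX. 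This is handled by the same ``spraying paint onto a moving surface'' mechanism as in Propositions~\ref{p:bdrydens2} and~\ref{p:bdrydens}: decompose $J_s[c]=J^{\mathrm{ac}}_s[c]+J^{\mathrm{sing}}_s[c]$, with $J^{\mathrm{ac}}_s[c]\in\Moi^d$ collecting the absolutely continuous interior terms (under H3 these also include the coagulation source terms, $\nabla h$ then being bounded) and $J^{\mathrm{sing}}_s[c]$ supported on $\Gamma_\mathrm{in}$ and, when $d=1$, on the atom-points of $\nabla h$; the $J^{\mathrm{ac}}$ part is treated by the $\Moi^d$-preservation just noted, and for the $J^{\mathrm{sing}}$ part one runs the change of variables $x=\Phi_{s,t}(\xi)$ of Proposition~\ref{p:bdrydens2} (and its obvious interior analogue at the atom-points) on $\int_0^t\widetilde{V}^{t,s}[c]J^{\mathrm{sing}}_s[c]\,\dd s$, whose Jacobian supplies the factor $u_s(\xi)\cdot n(\xi)$ on $\Gamma_\mathrm{in}$, respectively $u_s$ in the interior. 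This is exactly where one needs $\inf_{t,x}u_t(x)>0$ in the one-dimensional H2 setting and a sufficiently transversal inflow boundary in the general H3 setting, after which Propositions~\ref{p:posdensprop} and~\ref{p:bdrydens} yield the $\Moi^d$ bound; identification with a density in $L^\infty\left([0,\infty)\times\sX,\mathcal{M}(\sY)_\mathrm{TV}^d\right)$ is then immediate from the definition of $\Moi^d$. The main obstacle of the whole proof is precisely this last point --- carrying the change of variables through together with the Jacobian estimates of Proposition~\ref{p:dxPhi2} while keeping track of the I3 bounds on $I_\mathrm{bdry}$ and its derivatives --- whereas the time differentiation in the first step is routine and the uniqueness argument is standard.
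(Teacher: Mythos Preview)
Your proposal is correct and follows essentially the same route as the paper: existence and uniqueness are treated as immediate consequences of the linear propagator formalism, preservation of $\Moi^d$ by $\widetilde{V}^{t,s}[c]$ is argued by analogy with Proposition~\ref{p:densprop}, and the source term is split into an absolutely continuous interior piece (which is already in $\Moi^d$ under H3) and a singular piece on $\Gamma_\mathrm{in}$ --- together with, in the one-dimensional H2 case, finitely many interior atoms from the weak derivative of $h$ --- both of which are smoothed out by the time integral via the change-of-variables argument of Propositions~\ref{p:bdrydens2}\&\ref{p:bdrydens}, with $\inf_{t,x}u_t(x)>0$ supplying the Jacobian bound at the interior atoms. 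You give considerably more detail on the existence, continuity and uniqueness steps than the paper (which dismisses them in one line as ``immediate for this linear problem''), but the substantive part --- the decomposition of $J_s[c]$ and the mechanism for handling its singular pieces --- is identical.
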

\begin{proof}
Existence and uniqueness are immediate for this linear problem.  Continuity in the $\Lpb^\star$-norm
follows from the strong continuity
in $t$ of $\widetilde{V}^{t,s}$.

That the propagators $\widetilde{V}^{t,s}$ preserve $\Moi^d$ can be seen by analogy with Proposition~\ref{p:densprop}.
Definition~\ref{d:derivsrc} expresses the $J_t[c]$ as a sum of $\widehat{J}_t[c]$ and a term
concentrated on the inflow boundary.  Under H3 $\widehat{J}_t[c]\in\Moi^d$ and so one argues
as in Propositions~\ref{p:bdrydens2}\&\ref{p:bdrydens} to
show that $\int_0^t \widetilde{V}^{t,s}[c] J_s[c] \dd s$ has a density with respect to Lebesgue measure on \sX.

In the case when only H2 holds, then the $x$-derivatives of $h$ in \eqref{e:jint} may only exist in a
distributional sense.  However, under H2, the measure $\nabla_x h(x,\xi)(\dd x)$ can be expressed as
a sum of an absolutely continuous part with a bounded density and a finite number of atoms
$\alpha_k(t) \delta_{a_k}$ with $\alpha_k(t) \in \RR$, $a_k \in \sX\subset \RR$.  When $d=1$ each of
these atoms is like a simpler version of the boundary part of the inception measure, which in this case
reduces under the assumption I2 (see \S~\ref{s:incep}) to $I_\mathrm{bdry}(t,\dd y)\delta_0(\dd x)$.
The boundedness (uniform in $t$ and $k$) of the $\alpha_k(t)$ is immediate from the boundedness
of $K$ and $c$ and since $u$ is bounded away from 0 the analysis of
Propositions~\ref{p:bdrydens2}\&\ref{p:bdrydens} applies to show that for each $k$ and all $t$
\begin{equation}
\int_0^t \widetilde{V}^{t,s}[c]  \alpha_k(t) \delta_{a_k}\dd s \in \Moi^d
\end{equation}
with a global in time bound in the $\Moi$-norm.
\end{proof}

\begin{proof}[Proof of Theorem \ref{t:globdiff}]

For $d=1$ assume without loss of generality that $\sX=[0,L)$ for some $L>0$ and
$\Gamma_\mathrm{in}=\left\{0\right\}$.
The boundary condition $c(t,0,\dd y) =\frac{I_\mathrm{bdry}(t,0,\dd y)}{u_t(0)}$ is given by
Proposition~\ref{p:bdrycon}.
The presumed derivative $\nu$ from Proposition~\ref{p:uniqderiv} is then used to construct
\begin{equation}
\widetilde{c}(t,x,\dd y) =
\frac{I_\mathrm{bdry}(t,0,\dd y)}{u_t(0)} 
+
\int_0^x \nu(t,\xi,\dd y)\dd \xi,
\end{equation}
which is readily seen to be a strong solution to \eqref{eq:smol} and therefore to be in the same
$L^\infty\left([0,\infty),\Moi\right)$ equivalence class as $c$.  Therefore (a version of) $c$ is
in $L^\infty\left([0,\infty),W^{1,\infty}\left(\sX,\mathcal{M}(\sY)_\mathrm{TV}\right)\right)$ and since
$\deriv{t}c$ can be expressed in terms of $c$ and $\deriv{x}c$ the result follows.

This argument does not generalise easily to more then one space dimension.  However the existence
of a weak derivative was shown in Proposition~\ref{p:solnbv} and under H3 Proposition~\ref{p:uniqderiv}
shows that this weak derivative in fact has an $L^\infty$ density.  The boundary condition comes from Proposition~\ref{p:bdrycon}.
\end{proof}

\section{Discussion}
This paper proves the well posedness of an equation for measures, modelling the creation and coagulation of particles
in a flow, for example a flame, for which stochastic approximations were studied in \citep{Pat13}.  In that
work the existence of one or more non-negative solutions was proved under somewhat less general
assumptions there by constructing the solutions as limits of stochastic approximations.  The present
work extends this result by showing that there is in fact only one solution to the equation for a given
initial condition and thus that all limit points of the approximating
sequence from \citep{Pat13} are the same and those approximations converge rather than merely having
convergent sub-sequences.  The present work incidentally provides an additional, less constructive proof
of the existence of a solution to \eqref{eq:smol}.

It is proved here and in \citep{Pat13} that solutions to \eqref{eq:smol} have a density with respect to
Lebesgue measure on \sX and that this is uniformly bounded in time and in \sX.  The differentiability
of the density is established here even for delocalisations that are of bounded variation, but only in
one spatial dimension.  This result does not extend in full generality to higher spatial dimensions---it
is easy to imagine two parallel streams of particles that never mix and therefore not even continuity
over the dividing line in the flow, much less differentiability, is to be expected.  It seems therefore likely
that the discontinuous, cell based delocalisation of the coagulation interaction used for numerical
purposes in \citep{Pat12} is not well suited to more than one spatial dimension and that smoother
delocalisations should be used.  Similar methods have been used for the simulation of Boltzmann gases\citep{Ols08}.

\subsection{Acknowledgements}
The author thanks his colleagues Marita Thomas and Michiel Renger for their advice and patience.

\bibliography{./bib/rob-references}

\appendix
\section{The flow field}\label{s:flow}
\begin{defn}\label{d:Phi}
Let $s,t \in \RR$ and define the flows $\Phi_{s,t}$ by
\begin{equation*}
\pderiv{t}\Phi_{s,t}(x) = u_t\left(\Phi_{s,t}(x)\right),
 \quad \Phi_{s,s}(x) = x.
\end{equation*}
\end{defn}

$\Phi$ is a vector, so in more than one dimension it is necessary to distinguish between
the matrix $\nabla \Phi$, which is the subject of the next two propositions and the divergence, a
real number $\nabla \cdot \Phi$, which occurs in connection with the velocity field $u$.
\begin{prop}\label{p:dsPhi}
\begin{equation*}
\pderiv{s}\Phi_{s,t}(x)= -\nabla \Phi_{s,t}(x)u_s(x).
\end{equation*}
\end{prop}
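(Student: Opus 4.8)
The plan is to derive the formula from the cocycle identity for the flow,
\[
\Phi_{\sigma,t} = \Phi_{s,t}\circ\Phi_{\sigma,s},
\]
which holds for all $\sigma,s,t\in\RR$ and is an immediate consequence of the uniqueness of solutions to the defining ODE in Definition~\ref{d:Phi}: both sides, viewed as functions of $t$, solve $\pderiv{t}\Psi_t = u_t(\Psi_t)$ and agree at $t=s$, where they equal $\Phi_{\sigma,s}(x)$. Since $u\in C\left(\RRP,C^2\left(\overline{\sX},\RR^d\right)\right)$, standard ODE dependence theory shows that $(s,t,x)\mapsto\Phi_{s,t}(x)$ is jointly $C^1$ (in fact $\nabla\Phi_{s,t}$ is again $C^1$), so all the differentiations below are justified and the $s$- and $t$-derivatives may be interchanged.

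First I would fix $\sigma$, $t$ and $x$ and differentiate the identity $\Phi_{s,t}\!\left(\Phi_{\sigma,s}(x)\right)=\Phi_{\sigma,t}(x)$ with respect to $s$. The right-hand side is constant in $s$, so the chain rule gives
\[
0 = \left(\pderiv{s}\Phi_{s,t}\right)\!\left(\Phi_{\sigma,s}(x)\right) + \nabla\Phi_{s,t}\!\left(\Phi_{\sigma,s}(x)\right)\pderiv{s}\Phi_{\sigma,s}(x),
\]
where in the first term $\pderiv{s}$ denotes the derivative in the \emph{first} time index only and $\nabla$ is the spatial Jacobian. By Definition~\ref{d:Phi} the last factor equals $u_s\!\left(\Phi_{\sigma,s}(x)\right)$. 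Now, given any $s$ and any point $y\in\overline{\sX}$, specialising this identity to $\sigma=s$ and $x=y$ makes $\Phi_{\sigma,s}(x)=\Phi_{s,s}(y)=y$, and the displayed equation collapses to
\[
\pderiv{s}\Phi_{s,t}(y) + \nabla\Phi_{s,t}(y)\,u_s(y) = 0,
\]
which is the assertion.

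An equally short alternative differentiates the defining ODE directly: writing $w(s,t):=\pderiv{s}\Phi_{s,t}(x)$ and $V(s,t):=\nabla\Phi_{s,t}(x)$, one obtains $\pderiv{t}w = \nabla u_t\!\left(\Phi_{s,t}(x)\right)w$ (differentiating $\pderiv{t}\Phi_{s,t}(x)=u_t(\Phi_{s,t}(x))$ in $s$) and $\pderiv{t}V = \nabla u_t\!\left(\Phi_{s,t}(x)\right)V$ (differentiating it in $x$); moreover $\deriv{s}\Phi_{s,s}(x)=0$ together with $\pderiv{t}\Phi_{s,t}(x)\big|_{t=s}=u_s(x)$ forces $w(s,s)=-u_s(x)=-V(s,s)u_s(x)$, so $w(s,\cdot)$ and $-V(s,\cdot)u_s(x)$ solve the same linear $t$-ODE with the same datum and therefore coincide. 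I do not expect a genuine obstacle here: the only points needing care are the bookkeeping of the two time indices — keeping the derivative in the first index cleanly separated from the transport derivative $\pderiv{t}\Phi=u_t(\Phi)$ in the second — and checking that the assumed $C^2$-regularity of $u$ in space makes $\Phi$, and hence $\nabla\Phi$, continuously differentiable so that the chain rule and the exchange of derivatives are legitimate.
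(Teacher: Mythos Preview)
Your proposal is correct, and your primary argument---differentiating the cocycle identity $\Phi_{\sigma,t}=\Phi_{s,t}\circ\Phi_{\sigma,s}$ in $s$ and then specialising---is exactly the paper's approach, which carries out the same computation via an explicit difference quotient rather than invoking the chain rule. Your second, ODE-uniqueness argument is a clean alternative the paper does not use, but it is not needed here.
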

\begin{proof}
\begin{multline}
\lim_{\delta \searrow 0} \frac{\Phi_{s,t}(x)-\Phi_{s-\delta,t}(x)}{\delta}
= \lim_{\delta \searrow 0}  \frac{\Phi_{s,t}(x)-\Phi_{s,t}\left(\Phi_{s-\delta,s}(x)\right)}{\delta}\\
=\lim_{\delta \searrow 0}   \frac{\nabla \Phi_{s,t}(x)\left(x-\Phi_{s-\delta,s}(x)\right)}{\delta}
= -\nabla \Phi_{s,t}(x)u_s(x).
\end{multline}
The right sided limit is dealt with similarly.
\end{proof}

\begin{prop}\label{p:dxPhi2}
\begin{equation*}
e^{-\opnorm{\nabla u} (t-s)}
\leq \norm{\nabla \Phi_{s,t}(x)}_{\RR^d\rightarrow\RR^d}
\leq e^{\opnorm{\nabla u} (t-s)}.
\end{equation*}
and
\begin{equation*}
\det \nabla \Phi_{s,t}(x) =
 e^{\int_s^t \nabla\cdot u_r\left(\Phi_{s,r}(x)\right)\dd r}, \qquad
\det \nabla \Phi_{t,s}(x) =
 e^{-\int_s^t \nabla\cdot u_r\left(\Phi_{t,r}(x)\right)\dd r}.
\end{equation*}
\end{prop}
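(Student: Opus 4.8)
The plan is to differentiate the defining equation of Definition~\ref{d:Phi} with respect to the starting point $x$ and to read both assertions off the resulting linear (variational) equation. Write $J_{s,t}(x):=\nabla\Phi_{s,t}(x)$ for the spatial Jacobian matrix. Because $u$ is continuous in $t$ and $C^2$ in the spatial variable on $\overline{\sX}$, the standard theory of smooth dependence of ODE solutions on initial data applies: $\Phi_{s,\cdot}(\cdot)$ is $C^1$ in $x$, the derivatives $\pderiv{t}$ and $\nabla_x$ may be interchanged, and $J_{s,t}(x)$ solves
\begin{equation*}
\pderiv{t}J_{s,t}(x)=\nabla u_t\!\left(\Phi_{s,t}(x)\right)J_{s,t}(x),\qquad J_{s,s}(x)=\mathrm{Id},
\end{equation*}
the matrix $\nabla u_t$ being evaluated along the trajectory. (This only has content while $\Phi_{s,r}(x)$ stays in $\overline{\sX}$ for all $r$ between $s$ and $t$; for larger time separations $\Phi$ plays no role elsewhere in this work.)

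For the operator norm bounds I would fix a unit vector $w\in\RR^d$, set $v(r):=J_{s,r}(x)w$, and estimate $\abs{v(r)}^2$. From the variational equation $\deriv{r}\abs{v(r)}^2=2\,v(r)^\top\nabla u_r(\Phi_{s,r}(x))\,v(r)$, and by the definition of $\opnorm{\nabla u}$ this quantity lies between $-2\opnorm{\nabla u}\abs{v(r)}^2$ and $2\opnorm{\nabla u}\abs{v(r)}^2$. Gronwall's inequality, applied in both directions, then gives $e^{-\opnorm{\nabla u}(t-s)}\leq\abs{v(t)}\leq e^{\opnorm{\nabla u}(t-s)}$ for every unit $w$. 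Taking the supremum over such $w$ gives the stated two-sided bound on $\norm{J_{s,t}(x)}_{\RR^d\rightarrow\RR^d}$; the lower bound follows because the operator norm dominates $\abs{J_{s,t}(x)w}$ for any individual unit vector $w$.

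For the determinants I would invoke Liouville's formula for the linear matrix ODE $\dot J=M(r)J$, $J(s)=\mathrm{Id}$, namely $\det J(t)=\exp\!\left(\int_s^t\operatorname{tr}M(r)\,\dd r\right)$. With $M(r)=\nabla u_r(\Phi_{s,r}(x))$ and $\operatorname{tr}\nabla u_r=\nabla\cdot u_r$ this yields the first identity. For the second, I would apply the same formula to the curve $r\mapsto\Phi_{t,r}(x)$, which by Definition~\ref{d:Phi} satisfies $\pderiv{r}\Phi_{t,r}(x)=u_r(\Phi_{t,r}(x))$ with $\Phi_{t,t}(x)=x$, so that $\det\nabla\Phi_{t,s}(x)=\exp\!\big(\int_t^s\nabla\cdot u_r(\Phi_{t,r}(x))\,\dd r\big)$, which is the claimed expression. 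Equivalently one differentiates the relation $\Phi_{s,t}\circ\Phi_{t,s}=\mathrm{id}$ in $x$ and uses the flow property $\Phi_{s,r}\circ\Phi_{t,s}=\Phi_{t,r}$ to pass from the first identity to the second.

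The delicate step is the first one---justifying that $\Phi_{s,t}(x)$ is $C^1$ in $x$, that $J_{s,t}$ solves the variational equation above, and that the two differentiations commute. This is classical under the assumed $C^2$ regularity of $u$, but it is the only genuine analytic input; once the variational equation is available, the norm estimate is a routine Gronwall argument and the determinant formulae are immediate from Liouville.
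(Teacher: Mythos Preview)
Your argument is correct and follows essentially the same route as the paper: derive the variational equation $\pderiv{t}\nabla\Phi_{s,t}(x)=\nabla u_t(\Phi_{s,t}(x))\,\nabla\Phi_{s,t}(x)$, use Gronwall for the norm bounds, and Liouville's formula for the determinants. Your version is simply more explicit---working with $\abs{J_{s,t}(x)w}^2$ for a fixed unit vector $w$ makes the two-sided Gronwall estimate transparent, whereas the paper only says ``an application of Gronwall's inequality''---but the substance is identical.
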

\begin{proof}
For the first statement one has $\pderiv{t}\Phi_{s,t}(x) = u_t\left(\Phi_{s,t}(x)\right)$ so that, since $u$ and
therefore $\Phi$ are both smooth,
\begin{equation}
\pderiv{t}\nabla \Phi_{s,t}(x) = \nabla u_t\left(\Phi_{s,t}(x)\right) \nabla \Phi_{s,t}(x)
\end{equation}
and the result follows by an application of Gronwall's inequality.

The result for the determinant is known as Liouville's formula.  One checks by row operations that
$\det  \nabla u_t\left(\Phi_{s,t}(x)\right) \nabla \Phi_{s,t}(x)  =  \mathrm{Tr} (\nabla u) \det \left(\nabla \Phi_{s,t}(x)\right)$ and the result that follows by solving the resulting ODEs.
\end{proof}

\end{document}